\documentclass[
    11pt,
    letterpaper,
    reprint,
    notitlepage,
    superscriptaddress,
    aps,prx,
]{revtex4-2}

\usepackage{amsmath,amssymb,amsfonts} 

\usepackage{amsthm} 
\newtheorem{lemma}{Lemma}[section]
\newtheorem{theorem}{Theorem}
\newtheorem{definition}{Definition}[section]

\usepackage{cmupint} 
\usepackage{physics}

\usepackage{enumitem}
\setlist{nolistsep}

\usepackage{microtype}

\usepackage[svgnames,dvipsnames]{xcolor}
\usepackage{graphicx}
\definecolor{NewBlue}{rgb}{0.1, 0.1, 0.7}
\definecolor{NewRed}{rgb}{0.7, 0.1, 0.1}
\usepackage[colorlinks,
    linkcolor=NewBlue,
    citecolor=NewBlue,
    urlcolor=NewBlue]{hyperref}

\usepackage[capitalise]{cleveref}
\newcommand{\newsec}[1]{\textbf{#1}}

\renewcommand{\t}[1]{\mathrm{{#1}}}
\renewcommand{\d}{\dd}

\newcommand{\eps}{\varepsilon}

\renewcommand{\vec}[1]{\vb*{#1}}

\newcommand{\ii}{\mathrm{i}}
\newcommand{\cc}{\star}
\newcommand{\tg}{\mathrm{tan}}

\newcommand{\p}{\partial}

\usepackage{empheq}
\newcommand{\eqdef}{\coloneqq}

\DeclareMathOperator*{\arginf}{arg\,inf}

\newcommand{\LigoMIT}{LIGO Laboratory, Massachusetts Institute of Technology, Cambridge, MA 02139}
\newcommand{\MechMIT}{Department of Mechanical Engineering, Massachusetts Institute of Technology, Cambridge, MA 02139}
\newcommand{\PhysHarv}{Department of Physics, Harvard University, Cambridge, MA 02139}
\newcommand{\TrinCollCam}{Trinity College, University of Cambridge, United Kingdom}

\begin{document}

\title{Forecasting in the presence of scale-free noise}
\author{Serhii Kryhin}
\affiliation{\PhysHarv}
\email{skryhin@g.harvard.edu}
\author{Tatiana Mouzykantskii}
\affiliation{\TrinCollCam}
\date{\today}
\author{Vivishek Sudhir}
\affiliation{\LigoMIT}
\affiliation{\MechMIT}
\email{vivishek@mit.edu}

\begin{abstract}
The extraction of signals from noise is a common problem in all areas of science and 
engineering \cite{Kolm39,Wien49,Kalm60,KailHass00}. A particularly useful version
is that of forecasting: determining a causal filter that estimates a future value of a 
hidden process from past observations. Current techniques for deriving the filter require that the
noise be well described by rational power spectra. However, scale-free noises, whose spectra
scale as a non-integer power of frequency, are ubiquitous in practice. 
We establish a method, together
with performance guarantees, that solves the forecasting problem in the presence of
scale-free noise. Via the duality between estimation and control, our technique can be used to design 
control for distributed systems. These results will have wide-ranging applications in neuroscience, finance, fluid
dynamics, and quantum measurements.
\end{abstract}

\maketitle

\newsec{Introduction.}
The problem of forecasting signals from their noisy measurements was first 
formulated by Kolmogorov and Wiener \cite{Kolm39,Wien49}.
The solution, via the so-called Wiener-Hopf factorization, involves determining an analytic function in the complex plane from its boundary values, a procedure that can be performed when the noise is characterized
by a rational function of frequency \cite{WienHop31,Krein58,SayKail01,KisRog21}.
A state-space reformulation, the Kalman filter \cite{Kalm60,KailHass00}, 
is a mainstay of modern forecasting. 
Both formulations, especially when performance guarantees are required, are limited to observation 
processes with rational power spectra, an ubiquitous 
assumption in the modern theory of time-series analysis \cite{Brill01,Ham94}.

Equally ubiquitous is the practical relevance of forecasting in the presence of
noises which have no rational, or even analytic, spectra.
Examples abound: velocity fluctuations in
turbulent flow \cite{Kolm41turb,Batch59,Pope00,BruCarb13}, height fluctuations in ocean 
waves \cite{Ryab19,Holth10} and rivers \cite{MandWall69}, 
fluctuations in classical \cite{Calo74,Ziel79} and quantum electronics \cite{PalAlt14,BrowBla15,Mull19,YonTar23},
displacement fluctuations of mechanical oscillators used to test quantum mechanics and 
gravity \cite{GroAsp15,FedKip18,CripCorb19},
action potential fluctuations in the neural system \cite{Prit92,Mill09,neuro_noise}, 
fluctuations in financial markets \cite{Mand63,ManStan95,GhaDod96} and economies \cite{Gab09}, 
and even the frequency of human conflicts \cite{Rich60,Clau18}.
In all such examples, the relevant processes have spectra that are non-rational and/or 
scale with frequency $\omega$ as $\omega^{-\alpha}$ as $\omega \rightarrow 0$ for 
some (possibly non-integer) index $\alpha > 0$. State-space models driven by white noise cannot generate 
such noises, obviating the
Kalman filter paradigm. The Wiener filter paradigm could still hold, but it is unclear how to
formulate and deploy it.

We formulate the causal Wiener filter as a constrained variational problem, which results in
a pair of operator equations whenever the spectrum of the noise is square-integrable. We identify a symmetry
in the variational formulation that allows the filtering problem for a much larger class of noises to be transformed
to the square-integrable case. The square-integrable case is then reduced by constructing a complete set 
of exact eigenfunctions for the operators. This reduction eliminates the causality constraint and thereby obviates
Wiener-Hopf factorization. The solution of the resulting unconstrained problem is characterized by exploiting
results from the theory of Toeplitz operators on the unit circle \cite{Bottcher2006-nn}.
In particular, we prove bounds on the accuracy and performance of the exact Wiener filter in terms of the
exponent $\alpha$ that describes the power-law of the noise spectra.
Finally, we express our result as a readily-implementable algorithm, which can be applied on 
data without Wiener-Hopf factorization, and demonstrate it in a proof-of-principle
example.
Our method does not rely on any special structure for the 
noises \cite{Thiel49,Nob58,Shin70,Yao71,Reed73} other than that they are weak-stationary,
and is therefore a precise, general, and practical solution of the forecasting problem,
even for scale-free noise.

\newsec{Variational formulation of Wiener filter.}
The Wiener filter \cite{Wien49} is a function $H(t)$
that minimizes the variance of the error, $\eps(t) \eqdef x(t) - (H\ast y)(t)$,
in the estimation of the process $x$ from observation $y$.  
The causal (respectively, acausal) problem demands that the filter $H$ be causal (acausal).
We confine attention to the case where the processes $x,y$ are jointly weak-stationary and are thus 
fully characterized by their power spectral densities $S_{xx}(\omega),S_{yy}(\omega)$ and 
covariance spectral density $S_{xy}(\omega)$. In this case
the variance of the error
\begin{equation}\label{vare}
    V_{\eps}[h] = \int\limits_{-\infty}^{+\infty} S_{\eps \eps}(\omega) \frac{\d{\omega}}{2 \pi},
\end{equation}
has a functional dependence on the Fourier transform $h(\omega)$ of the filter through the power spectrum 
of the error
\begin{equation}\label{eq:See}
\begin{split}
    S_{\eps \eps}(\omega) = S_{xx}(\omega) &+ \abs{h(\omega)}^2 S_{yy}(\omega) \\ 
        &- 2 \Re\{h^\cc(\omega) S_{xy}(\omega)\}.
\end{split}
\end{equation}
Causality of the filter amounts to $H(t)=\Theta(t) H(t)$, where 
the step function $\Theta$ suppresses $H(t<0)$. In the frequency domain,
the causality condition is $h = \ii \mathcal{H}[h]$, where
$\mathcal{H}[f](\omega) \eqdef \int f(\omega')[\pi(\omega-\omega')]^{-1}\dd \omega'$ is the Hilbert transform.

The causal Wiener filter is therefore the solution to the constrained optimization problem
\begin{subequations}\label{Opt}
\begin{align}
    h_* &= \arginf_h V_{\eps}[h]\label{hCausal} \\
    \t{s.t.}\qquad h &= \ii \mathcal{H}[h] \label{hOptCausal}.
\end{align}
\end{subequations}
We look for filters $h$ in the Hilbert space $L^2(\mathbb{R})$. 
Then the constraint, which defines the feasible filters, assumes the form $h \in \ker \mathcal{G}_-$,
where $\mathcal{G}_\pm \eqdef (\mathcal{I}\pm \ii \mathcal{H})/2$ are bounded orthogonal projectors 
such that $\mathcal{G}_+ + \mathcal{G}_- = \mathcal{I}$.

The minimizer $h_*$ is determined by the Gateaux derivative of $V_\eps[h_*]$ in the direction $\delta h$, 
$\delta V_\eps [h_*, \delta h] \eqdef \lim_{\lambda \rightarrow 0} (V_\eps[h_* + \lambda\, \delta h]
-V_\eps[h_*])/\lambda = 0$, as long as the minimizer and the direction are feasible.
Since the map $h\mapsto V_\eps[h]$ is convex, this is necessary and sufficient, and determines the global
minimum (\cite{EkeTem76,Barbu2012-pn} and Supplemental Information \cref{sec:Minimization}). Explicitly,
\begin{equation}\label{eq:deltaVnaive}
	\delta V_\eps[h_*, \delta h] = 2 \Re\, \langle S_{yy} h_* - S_{xy}, \delta h \rangle = 0,
\end{equation}
for $h_*, \delta h \in \ker \mathcal{G}_-$. 
This equation is of the form $\Re \langle g, \delta h\rangle =0$ for $g = S_{yy}h_* - S_{xy}$.
We now use two key properties of $\mathcal{G}_\pm$ to simplify it. 
First, owing to the linearity of $\mathcal{G}_-$, 
if $\delta h \in \ker \mathcal{G}_-$, we also have $\ii \delta h \in \ker \mathcal{G}_-$; so  
if $\Re \langle g, \delta h\rangle = 0$, then 
$\Re \langle g, \ii \delta h\rangle =0$, which implies that $\Im \langle g, \delta h\rangle = 0$,
and therefore $\langle g, \delta h\rangle=~0$.
Second, we have that $\mathcal{G}_+ + \mathcal{G}_- = \mathcal{I}$ and $\mathcal G_\pm$ are orthogonal projectors. 
This implies that $L^2(\mathbb R) = \ker \mathcal G_+ \oplus \ker \mathcal G_-$ and vectors in $\ker \mathcal G_+$ and $\ker \mathcal G_-$ are mutually orthogonal. 
So,  $\langle g,\delta h \rangle = 0$ for all $\delta h \in 
\ker \mathcal{G}_-$ implies that $g \in \ker \mathcal G_+$, leading to $\mathcal{G}_+[g]=0$.
Thus, the necessary and sufficient condition for $h_*$ to be the globally optimal causal
Wiener filter is that
\begin{subequations}\label{Geqs}
\begin{align}
    \mathcal{G}_+[h_* S_{yy}] & = \mathcal{G}_+[S_{xy}] \label{Gplus}, \\
    \mathcal{G}_-[h_*] &= 0 \label{Gminus}.
\end{align}    
\end{subequations}
(In the acausal case, \cref{Gminus} is absent, and the analogue of \cref{Gplus} is $hS_{yy}=S_{xy}$.)
The validity of this conclusion rests on the fact that the Hilbert transform is bounded in 
$L^2(\mathbb{R})$ \cite{Stein1970}; 
in particular, $h$, $hS_{yy}$, and $S_{xy}$ must be in $L^2(\mathbb{R})$, leading to the following observation:

\begin{theorem}\label{Theorem1}
    \Cref{Gplus,Gminus} determine a causal Wiener filter $h$ from the data $(S_{xy},S_{yy})$ if
    the data and the filter belong to $L^2(\mathbb R)$.
\end{theorem}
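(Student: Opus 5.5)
The plan is to show that, under the $L^2$ hypotheses, a filter $h$ satisfies \cref{Gplus,Gminus} if and only if it minimizes $V_\eps$ over the feasible set $\ker\mathcal G_-$. I would proceed in three steps, in this order: well-posedness of the objects involved, equivalence of first-order optimality with \cref{Geqs}, and sufficiency of first-order optimality via convexity.

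\emph{Well-posedness.} First I would verify that every object in the argument is well defined. Since $h, hS_{yy}, S_{xy}\in L^2(\mathbb R)$ and $\mathcal H$ is bounded on $L^2(\mathbb R)$ \cite{Stein1970}, the operators $\mathcal G_\pm=(\mathcal I\pm\ii\mathcal H)/2$ are bounded. Using $\mathcal H^2=-\mathcal I$ and $\mathcal H^\dagger=-\mathcal H$, they are self-adjoint idempotents, i.e.\ orthogonal projectors with $\mathcal G_++\mathcal G_-=\mathcal I$ and $\mathcal G_+\mathcal G_-=0$. This gives the orthogonal decomposition $L^2=\ker\mathcal G_+\oplus\ker\mathcal G_-$. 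The feasible set $\ker\mathcal G_-$ is therefore a closed complex subspace, and both sides of \cref{Gplus} make sense.

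\emph{First-order condition.} Next I would compute the Gateaux derivative directly from \cref{eq:See}. Expanding $V_\eps[h+\lambda\delta h]$ in $\lambda$ gives
\[
V_\eps[h+\lambda\delta h]=V_\eps[h]+2\lambda\Re\langle S_{yy}h-S_{xy},\delta h\rangle+\lambda^2\int S_{yy}\abs{\delta h}^2\frac{\d\omega}{2\pi}.
\]
Each term is finite when $S_{yy}h$, $S_{xy}$ and $h$ lie in $L^2$ and the direction $\delta h$ is suitable, for instance when $S_{yy}^{1/2}\delta h\in L^2$. Then I would repeat the argument given before the theorem. Because $\ker\mathcal G_-$ is complex-linear, the real part vanishing for all feasible $\delta h$ forces the full inner product to vanish. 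Because the decomposition is orthogonal, this means $g=S_{yy}h-S_{xy}$ lies in $(\ker\mathcal G_-)^\perp=\ker\mathcal G_+$, which is exactly \cref{Gplus}. Feasibility of $h$ itself is \cref{Gminus}.

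\emph{Sufficiency and the main obstacle.} Finally I would use convexity to show that any solution of \cref{Geqs} is a global minimizer. The $\lambda^2$ coefficient is nonnegative because $S_{yy}\ge0$. So for feasible $h$ and $\delta h$ the expansion gives $V_\eps[h+\delta h]\ge V_\eps[h]+2\Re\langle g,\delta h\rangle=V_\eps[h]$. Conversely, a minimizer must have vanishing derivative in every feasible direction, so \cref{Geqs} is both necessary and sufficient. The main obstacle is the density of admissible directions: the conclusion $g\perp\ker\mathcal G_-$ is only immediate on a dense subspace, such as causal $\delta h$ with $S_{yy}^{1/2}\delta h\in L^2$. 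I would first try to restrict to $\delta h$ in the Hardy-space image of causal Schwartz functions, on which all terms are finite when $S_{yy}$ is locally integrable, and then extend the orthogonality by continuity of $\langle g,\cdot\rangle$ on $L^2$. This continuity holds precisely because $g\in L^2$, which is where the theorem's $L^2$ hypotheses enter. Uniqueness is not asserted and would need strict convexity, e.g.\ $S_{yy}>0$ almost everywhere.
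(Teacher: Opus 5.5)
Your proof is correct. Its skeleton is the paper's: use the $\ii\,\delta h$ trick to pass from $\Re\langle g,\delta h\rangle=0$ to $\langle g,\delta h\rangle=0$, then use $(\ker\mathcal G_-)^\perp=\ker\mathcal G_+$. You differ in how you show that the first-order condition is necessary and sufficient.

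\textbf{The paper's route.} It argues abstractly. In Supplemental \cref{sec:Minimization} it lifts Barbu's criterion for convex Gateaux-differentiable functionals on real Banach spaces to \emph{complex-$\star$} Banach spaces via a conformal Gateaux derivative. This packages the real-versus-complex bookkeeping in a reusable form.

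\textbf{Your route.} You use the fact that $\lambda\mapsto V_\eps[h+\lambda\,\delta h]$ is an explicit real quadratic with nonnegative leading coefficient. This gives necessity (the linear coefficient of a quadratic minimized at $\lambda=0$ vanishes) and sufficiency (the remainder $\int S_{yy}\abs{\delta h}^2\ge 0$) at once, with no external theorem.

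\textbf{What your route gains in rigor.} The abstract criterion presupposes that $V_\eps$ is finite and Gateaux-differentiable along every direction in $\ker\mathcal G_-$. That essentially requires $S_{yy}\in L^\infty$: for unbounded $S_{yy}$ there are causal $\delta h\in L^2$ with $\int S_{yy}\abs{\delta h}^2=\infty$. The theorem, however, only assumes $S_{yy}\in L^2$. You restrict to a dense set of admissible directions and then extend orthogonality using the continuity of $\langle g,\cdot\rangle$ for $g\in L^2$. That handles the stated hypothesis correctly and shows exactly where $g\in L^2$ is used.

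\textbf{One correction.} Local integrability of $S_{yy}$ is not what makes $\int S_{yy}\abs{\delta h}^2$ finite for Schwartz directions; you also need control at infinity. Use $S_{yy}\in L^2$ instead, which gives $\int S_{yy}\abs{\delta h}^2\le\norm{S_{yy}}_{L^2}\norm{\delta h}_{L^4}^2$. Alternatively, take directions in $\ker\mathcal G_-\cap L^\infty$, which is already dense in $\ker\mathcal G_-$.
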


\newsec{Transformation of scale-free data.}
In this work, we are concerned with spectral data characterized by ``scale-free'' 
asymptotics of the form
$S_{xy}(\omega) = \Theta(1/|\omega|^{\alpha_x})$ and $S_{yy}(\omega) \sim A_y/|\omega|^{\alpha_y}$, $A_y >0$, as $|\omega| \rightarrow \infty$, and $S_{xy}(\omega) = \Theta(1/|\omega|^{\beta_x})$ and $S_{yy}(\omega) = B_y/|\omega|^{\beta_y}$, $B_y>0$, as $|\omega| \rightarrow 0$. 
That is, in general, the data does not belong to $L^2(\mathbb R)$, and so direct application of 
\cref{Gplus,Gminus} to $(S_{xy},S_{yy})$ is invalid.
Assuming that the data $(S_{xy},S_{yy})$ is continuous and non-vanishing away from $\omega = 0, \pm \infty$, 
it is evident that the only obstruction for the data to be square-integrable is its asymptotics in 
the vicinity of $\omega = 0, \pm \infty$. 

To overcome this problem, we for a moment return to the 
variational formulation in \cref{Opt} which seeks a minimizer $h(\omega)$, of $V_\eps[h]$, 
that is analytic in the upper half $\omega$ plane. Considering $V_\eps$ as a function of the filter $h$ and
the data $(S_{xy},S_{yy})$, it is obvious that
\begin{equation}
    V_\eps[h,S_{xy},S_{yy}] = V_\eps[h f^{-1}, S_{xy} f^\cc, S_{yy} \abs{f}^2],
\end{equation}
for any function $f(\omega)$. Thus, the problem of determining the Wiener filter $h$ 
for the data $(S_{xy},S_{yy})$ can be mapped to the Wiener filtering problem for the data
$(S_{xy}',S_{yy}') = (S_{xy}f^\cc, S_{yy}\abs{f}^2)$ for which the filter is $h'=h f^{-1}$, 
for any function $f(\omega)$ devoid of zeros in the upper half plane.
The advantage is that proper choice of $f(\omega)$ can change 
the asymptotics of the transformed data and filter 
in a way that allows to satisfy the conditions of \cref{Theorem1} even if the original data and filter do not. 
For example, $f_{\alpha,\beta}(\omega) = e^{\ii \alpha/2} \omega^\beta (\ii + \omega)^{\alpha-\beta}$ is analytic
and devoid of zeros in the upper half plane, with power-law asymptotics $\abs{\omega}^\beta$ at infinity
and $\abs{\omega}^\alpha$ at zero. Transforming by such a function maps the Wiener filtering problem for
scale-free data into one that is amenable to the formulation in \cref{Geqs}, as
made precise by the following.

\begin{theorem}\label{TheoremBound}
	Suppose the data $(S_{xy}, S_{yy})$ satisfy the following criteria. $S_{xy}, S_{yy}$ are continuous 
    in all closed intervals that exclude 
    $\omega = 0$ and $\omega = \infty$, with asymptotics:
    \begin{equation}
    \begin{aligned}
        &S_{xy}(\omega \rightarrow \infty) = O\left(|\omega|^{-\alpha_x}\right),\,
        &S_{yy}(\omega \rightarrow \infty) \sim \frac{A_y}{|\omega|^{\alpha_y}}, \\
        &S_{xy}(\omega \rightarrow 0) = O(|\omega|^{-\beta_x}),\,
        &S_{yy}(\omega \rightarrow 0) \sim \frac{B_y}{|\omega|^{\beta_y}},
    \end{aligned}
    \end{equation}
    where $A_y,B_y >0$, $2 \alpha_x - \alpha_y > 1$ and $2 \beta_x - \beta_y < 1$. Additionally, $S_{yy}$ is strictly positive away from $\omega = 0, \pm \infty$.
    Then, the modified data $(S_{xy}',S_{yy}')$ corresponding to $|f(\omega \rightarrow \infty)|
    \sim |\omega|^{\alpha_y/2}$ and $|f(\omega\rightarrow 0)| \sim |\omega|^{\beta_y/2}$
, has the following asymptotics
    \begin{equation}\label{eq:Thrm2Asymp}
    \begin{aligned}
        &S_{xy}^\prime(\omega \rightarrow \infty) = o(\abs{\omega}^{-1/2})\,
        &S_{yy}^\prime(\omega \rightarrow \infty) \sim A_y\\
        &S_{xy}^\prime(\omega \rightarrow 0) = o(\abs{\omega}^{-1/2})\,
        &S_{yy}^\prime(\omega \rightarrow 0) \sim B_y;
    \end{aligned}    
    \end{equation} 
    and, the optimal causal filter $h'$ satisfies \cref{Gplus,Gminus} with the modified data 
    and is in $L^2(\mathbb{R})$. Additionally, $S_{yy}^\prime$ is a positive function bounded from above and below by positive numbers.
\end{theorem}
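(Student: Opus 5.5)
We take $f = f_{\beta_y/2,\alpha_y/2}$ in the notation above, i.e.\ $f(\omega) = e^{\ii\beta_y/4}\,\omega^{\beta_y/2}(\ii+\omega)^{(\alpha_y-\beta_y)/2}$. This is analytic and zero-free in the upper half plane, with $|f|\sim|\omega|^{\alpha_y/2}$ at infinity and $|f|\sim|\omega|^{\beta_y/2}$ at zero; any $f$ with the stated asymptotics works identically. The proof then has three steps: asymptotics of the modified data, two-sided bounds on $S_{yy}'$, and existence and square-integrability of the causal minimizer $h'$, after which \cref{Theorem1} applies.

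\emph{Asymptotics and bounds.} Since $S'_{yy} = S_{yy}|f|^2$, we get $S'_{yy}\sim A_y|\omega|^{-\alpha_y}|\omega|^{\alpha_y}=A_y$ at infinity and $\sim B_y$ at zero. Similarly $|S'_{xy}| = |S_{xy}||f| = O(|\omega|^{\alpha_y/2-\alpha_x})$ at infinity. The hypothesis $2\alpha_x-\alpha_y>1$ gives $\alpha_x-\alpha_y/2>1/2$, hence $o(|\omega|^{-1/2})$. At zero, $|S'_{xy}| = O(|\omega|^{\beta_y/2-\beta_x})$, and we must check this against the condition $2\beta_x-\beta_y<1$ to conclude $o(|\omega|^{-1/2})$. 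Here I expect a sign subtlety: the condition as stated yields exponent $>-1/2$, so the bound is in fact $O(|\omega|^{-1/2+\delta})$, which is $o(|\omega|^{-1/2})$ near zero. In either case $S'_{xy}$ is square-integrable near $0$ and $\infty$. For positivity, $S'_{yy}$ is continuous and strictly positive on every compact subinterval of $(0,\infty)$ and $(-\infty,0)$, because $S_{yy}>0$ there and $f$ has no zeros on $\mathbb R\setminus\{0\}$. Near $0$ and $\pm\infty$ it tends to $B_y$ and $A_y$. A compactness argument on $[\epsilon,R]$, with $\epsilon,R$ chosen so that $S'_{yy}\in[\tfrac12\min(A_y,B_y),2\max(A_y,B_y)]$ outside, then gives $0<m\le S'_{yy}\le M<\infty$. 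Combined with local boundedness of $S'_{xy}$ away from $0,\infty$ (continuity) and the tail estimates, this gives $S'_{xy}\in L^2(\mathbb R)$.

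\emph{Existence and $L^2$-ness of $h'$.} With $m\le S'_{yy}\le M$, the functional
\[
V'[h]=\int\bigl(S'_{xx}+S'_{yy}|h|^2-2\Re\{h^\cc S'_{xy}\}\bigr)\frac{\d\omega}{2\pi}
\]
restricted to the closed subspace $\ker\mathcal G_-\subset L^2(\mathbb R)$ is a continuous, strictly convex, coercive quadratic form: $V'[h]-\int S'_{xx}\ge m\|h\|^2-2\|S'_{xy}\|\|h\|$. Equivalently, we minimize $\|\sqrt{S'_{yy}}\,h - S'_{xy}/\sqrt{S'_{yy}}\|^2$ over the closed subspace $\sqrt{S'_{yy}}\ker\mathcal G_-$. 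This subspace is closed because multiplication by $\sqrt{S'_{yy}}$ is an isomorphism of $L^2$. The projection theorem therefore gives a unique minimizer $h'\in\ker\mathcal G_-\subset L^2$, and $h'S'_{yy}\in L^2$ since $S'_{yy}\le M$. \Cref{Theorem1} and the derivation of \cref{Geqs} then show that $h'$ satisfies \cref{Gplus,Gminus} with the modified data.

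The main obstacle I anticipate is identifying this $L^2$ minimizer with the optimal causal filter for the original problem. We need $h = h'f$ to be causal, which holds because $f$ is analytic in the upper half plane and the product of upper-half-plane analytic functions stays analytic. We also need every original causal filter with finite error to correspond to some $h'\in\ker\mathcal G_-$. For the second point I would argue via the invariance $V_\eps[h,S_{xy},S_{yy}]=V_\eps[hf^{-1},\dots]$: any feasible $h$ with finite variance has $\int S'_{yy}|hf^{-1}|^2<\infty$, so $hf^{-1}\in L^2$. Then $hf^{-1}$ is analytic in the upper half plane because $f$ is zero-free there, and its $L^2$ boundary values place it in the Hardy space $H^2=\ker\mathcal G_-$. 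This last step (Hardy space membership, controlling growth of $f^{-1}$ near $\omega=0$ inside the half plane) is the point I expect to require the most care.
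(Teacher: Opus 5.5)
Your argument is correct, and its core is the same as the paper's. You compute the same asymptotics for $(S'_{xy},S'_{yy})$; there is no sign issue at $\omega\to 0$, since $2\beta_x-\beta_y<1$ gives exactly an exponent $>-1/2$. You prove the same two-sided bound on $S'_{yy}$. You use the same completed-square estimate: finite error forces $hf^{-1}\in L^2(\mathbb R)$ because $S'_{yy}$ is bounded below.

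Where you differ is the existence step. The paper starts from an optimal filter $h$ of the original problem, infers its existence only from $\inf V_\eps\le V_\eps[0]<\infty$, and then shows $h'=hf^{-1}\in L^2$. A finite infimum does not by itself give a minimizer. You instead project $S'_{xy}/\sqrt{S'_{yy}}$ onto the closed subspace $\sqrt{S'_{yy}}\,\ker\mathcal G_-$. The projection theorem then gives existence and uniqueness of $h'\in\ker\mathcal G_-$ rigorously, and $h'\in L^2$ comes for free. The price is that you must then match the feasible sets of the original and transformed problems. That is where the paper's estimate reappears in your argument. The paper never does this matching; it simply treats $h\mapsto hf^{-1}$ as an exact equivalence of the two problems.

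On the step you flagged: the difficulty is not the growth of $f^{-1}$. Up to a unimodular constant, your $f$ is a product of real powers of $\omega$ and $\ii+\omega$, so it is an outer function, and so is $f^{-1}$. The real difficulty is that analyticity in the upper half plane plus $L^2$ boundary values does not imply membership in $H^2=\ker\mathcal G_-$. For example, $e^{-\ii\omega}/(\omega+\ii)$ is analytic there and square-integrable on $\mathbb R$, yet it is the transform of a function supported on $[-1,\infty)$.

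The fix is to build a growth condition into what "causal" means for a non-$L^2$ filter, e.g. membership in the Smirnov class $N^+$ of the upper half plane. Since $f$ is outer, $h\in N^+$ if and only if $hf^{-1}\in N^+$. Smirnov's theorem ($N^+$ functions with $L^2$ boundary values lie in $H^2$) then gives $hf^{-1}\in\ker\mathcal G_-$ for every causal $h$ of finite error. Conversely, $h'f$ is causal for every $h'\in\ker\mathcal G_-$.

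A minor point on labels: with the paper's formula $f_{\alpha,\beta}=e^{\ii\alpha/2}\omega^\beta(\ii+\omega)^{\alpha-\beta}$, your function is $f_{\alpha_y/2,\beta_y/2}$, not $f_{\beta_y/2,\alpha_y/2}$. The main text swaps zero and infinity when describing the asymptotics of $f_{\alpha,\beta}$, but its proof uses $\alpha=\alpha_y/2$, $\beta=\beta_y/2$. Your explicit formula is the right one.
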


A detailed proof is available in Supplemental Information \cref{sec:ProofTHeorem2}. 
A crucial element of the proof is the observation that owing to the Cauchy-Schwarz inequality, 
$S_{xx} S_{yy} \geq |S_{xy}|^2$; and in view of \cref{vare}, 
$S_{xx}$ itself needs to be integrable, so that the asymptotic growth indices
of $S_{xy}$ and $S_{yy}$ have to satisfy $2 \alpha_x - \alpha_y > 1$ and $2 \beta_x - \beta_y < 1$. 
The proof also does not clarify whether the solution for the transformed problem, via \cref{Gplus,Gminus}, 
is square-integrable; this can be separately verified using techniques from the theory of Toeplitz 
operators (see Supplementary Information \cref{sec:HpSpace}).


\newsec{Eigenfunctions of $\mathcal{G}_\pm$.}
The procedure laid out above, leading to \Cref{TheoremBound}, shows how a given problem can be transformed
into a form where the data $(S_{yy},S_{xy})$ satisfies the conditions of \cref{Theorem1}. In the sequel, we
assume this has been done. 
Then, the causal Wiener filter is the solution of the pair of operator equations \cref{Geqs}.
Since $\mathcal{G}_{\pm}$ are orthogonal projectors, their eigenfunctions together span some subspace
of $L^2(\mathbb{R})$. Once these eigenfunctions are determined, the causal Wiener filter problem
can be explicitly solved in their span. The precise characterization of the eigenfunctions will therefore solve 
the causal Wiener filter problem of \cref{Geqs}.

We begin by observing that the eigenfunctions of $\mathcal{G}_\pm$ are
eigenfunctions of the Hilbert transform with eigenvalues $\pm \ii$. 
The following theorem characterizes one such set.

\begin{theorem}\label{thm:eigHilbert}
The sequence of functions
\begin{equation}\label{eq:eigHilbert}
	\phi_k (\omega) = \frac{\pi^{-1/2}}{1 - \ii \omega} \left( \frac{1 + \ii \omega}{1 - \ii \omega} \right)^k , \qquad (k \in \mathbb{Z})
\end{equation}
is a complete orthonormal set in $L^2(\mathbb R)$
and are eigenfunctions of the Hilbert transform:
\begin{equation}\label{eq:Heig}
	\mathcal{H}[\phi_k] = - \ii\, \t{sgn}(k)\cdot \phi_k,
\end{equation}
where $\t{sgn}(k \geq 0) \eqdef +1$, $\t{sgn}(k<0) \eqdef - 1$.
Any $~g~\in~L^2(\mathbb R)$ can be expanded in $\{\phi_k\}$:
\begin{equation}\label{eq:fsum}
	g(\omega) =\sum_{k = -\infty}^{+\infty} g_k \phi_k(\omega), 
\end{equation}
with convergence understood in the sense of pointwise almost everywhere
with $g_k = \langle \phi_k, g\rangle_{L^2(\mathbb{R})}$.
\end{theorem}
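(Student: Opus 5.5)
The plan is to transport everything from the unit circle to the real line by the Cayley transform
\[
z(\omega) = \frac{1+\ii\omega}{1-\ii\omega} = e^{\ii\theta},\qquad \omega = \tan(\theta/2).
\]
This map is a diffeomorphism from $\mathbb R$ onto the unit circle with the point $z=-1$ removed, which is a null set.

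\textbf{Step 1: the unitary map.} Define
\[
(\mathcal U g)(\omega) \eqdef \frac{\pi^{-1/2}}{1-\ii\omega}\, g\big(z(\omega)\big),
\]
so that $\phi_k = \mathcal U[z^k]$. Since $\d\theta = 2\,\d\omega/(1+\omega^2)$ and $\abs{1-\ii\omega}^2 = 1+\omega^2$, we get
\[
\int_{\mathbb R}\abs{\mathcal U g}^2\,\d\omega = \frac{1}{2\pi}\int_0^{2\pi}\abs{g(e^{\ii\theta})}^2\,\d\theta .
\]
The same computation applied to inner products shows $\mathcal U$ is an isometry from $L^2(\mathbb T,\d\theta/2\pi)$ into $L^2(\mathbb R)$. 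It is also surjective: its inverse is
\[
g(z) = \pi^{1/2}\,(1-\ii\omega(z))\,f(\omega(z)),
\]
and the same change of variables shows this lies in $L^2(\mathbb T)$ whenever $f\in L^2(\mathbb R)$. Orthonormality and completeness of $\{\phi_k\}$ then follow at once from those of the Fourier basis $\{z^k\}$. The coefficient formula $g_k = \langle \phi_k, g\rangle$ follows from unitarity.

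\textbf{Step 2: the eigenvalue relation.} Here I would compute the principal-value integral $\mathcal H[\phi_k](\omega) = \pi^{-1}\,\mathrm{PV}\!\int \phi_k(\omega')(\omega-\omega')^{-1}\d\omega'$ by contour closure.

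For $k\ge 0$, the only pole of $\phi_k$ is at $\omega'=-\ii$, in the lower half plane, and $\phi_k = O(1/\abs{\omega'})$ at infinity. Closing the contour in the upper half plane picks up only the half-residue at $\omega'=\omega$. This gives $\mathcal H[\phi_k] = -\ii\,\phi_k$. Equivalently, $\phi_k$ is the boundary value of an $H^2$ function of the upper half plane, which is exactly the causality condition $h = \ii\mathcal H[h]$ used above.

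For $k<0$, write $\phi_k = \pi^{-1/2}(1-\ii\omega)^{-k-1}(1+\ii\omega)^{k}$. Its only pole is at $\omega'=\ii$. Closing in the lower half plane then gives the opposite sign, $+\ii\phi_k$.

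The decay $O(1/\abs{\omega'})$ justifies dropping the large arc, since the integrand is $O(\abs{\omega'}^{-2})$. I expect the main care here to be the sign bookkeeping: the orientation of the half-residue relative to the paper's convention for $\mathcal H$. I would check it against the case $k=0$ directly.

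\textbf{Step 3: pointwise convergence.} The claim of pointwise almost-everywhere convergence of \cref{eq:fsum} is the genuinely deep step. Norm convergence is immediate from Step 1, but a.e. convergence is not. I would invoke Carleson's theorem for the function $\mathcal U^{-1}g\in L^2(\mathbb T)$: its Fourier series converges a.e. in $\theta$. Applying $\mathcal U$ multiplies the partial sums pointwise by the nonvanishing factor $\pi^{-1/2}/(1-\ii\omega)$ and composes with a diffeomorphism. Both operations preserve null sets, so the partial sums $\sum_{\abs{k}\le N} g_k\phi_k$ converge to $g$ for almost every $\omega$. This holds for symmetric partial sums, which is the sense in which I would state the result.
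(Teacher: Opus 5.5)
Your proposal is correct and follows the same route as the paper. You transport the Fourier basis to $\mathbb R$ via $\omega=\tan(u/2)$, which gives orthonormality and completeness by unitarity, and you get a.e.\ convergence from Carleson--Hunt. You also actually carry out the contour-integral eigenvalue computation that the paper only asserts, with the correct signs: $-\ii\phi_k$ for $k\ge 0$ and $+\ii\phi_k$ for $k<0$. Your inverse map $f\mapsto \pi^{1/2}(1-\ii\omega)f(\omega)$, i.e.\ the factor $\sqrt{\pi}\,e^{-\ii u/2}\sec(u/2)$, is the right normalization; the paper's stated factor $e^{+\ii u/2}$ would send $\phi_n$ to $e^{\ii(n+1)u}$ rather than $e^{\ii n u}$.
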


The proof of \cref{eq:Heig} is an explicit computation by contour integration. 
The form of $\phi_k$, and indeed its completeness, follows from its relation to Fourier series:
the variable substitution $\omega = \tg (u/2)$ maps the real line $\mathbb{R}$ into the interval 
$[-\pi, +\pi]$ with the ends identified, i.e. the unit circle $\mathbb T$.
The set of functions $\{e^{\ii k u}\mid k\in \mathbb{Z}\}$ is a complete orthonormal basis in $L^2(\mathbb{T})$;
$\phi_k$ is its inverse image under the variable substitution. 
The variable substitution induces the map $g(\omega) \mapsto \tilde{g}(u) = \sqrt{\pi} e^{\ii u/2} 
g(\tan (u/2)) \sec(u/2)$. The expansion coefficient $g_k$ of \cref{eq:fsum} is then the $k^\t{th}$ 
Fourier coefficient of $\tilde{g}(u)$. 
Thus, the map $g(\omega) \mapsto \tilde{g}(u)$ is an isometric isomorphic map 
$L^2(\mathbb R) \rightarrow L^2(\mathbb T)$. As a result, the basis $\{\phi_k(\omega)\}$ inherits the completeness
of the Fourier basis $\{e^{\ii k u}\}$, and the convergence of \cref{eq:fsum} almost everywhere follows 
from the convergence of the Fourier series 
almost everywhere in $L^2(\mathbb T)$ \cite{carleson1966convergence,Hunt1968,Jorsboe1982-ef}.
See \cref{app:eigHilbert} for details.

The immediate corollary of \cref{thm:eigHilbert} is that
\begin{equation}\label{eq:eigG}
	\mathcal{G}_+[\phi_k] = \begin{cases} \phi_k; & k \geq 0 \\ 0; & k < 0  \end{cases}, 
	\qquad
	\mathcal{G}_-[\phi_k] = \begin{cases} 0; & k \geq 0 \\ \phi_k; & k < 0  \end{cases}. 
\end{equation}
That is, in $L^2(\mathbb R)$, the causality constraint [\cref{Gminus}] can be solved exactly:
\begin{equation}\label{eq:hexpansion}
	h(\omega) = \sum_{k = 0}^{\infty} h_k \phi_k(\omega)
\end{equation}
with equality being true almost everywhere for some sequence of coefficients $\{h_k\}$ to be determined by \cref{Gplus}. Given that $S_{xy}$ and $S_{yy}h$ also belong to $L^2(\mathbb R)$,
\cref{Gplus} is equivalent to
\begin{equation}\label{eq:GhProj}
  \langle \phi_k , \, \mathcal{G}_+[h S_{yy}-S_{xy}] \rangle_{L^2(\mathbb{R})} = 0
\end{equation}
for all integer $k$. 
The isomorphism between $L^2(\mathbb{R})$ and $L^2(\mathbb{T})$ and
the convolution theorem of Fourier series in $L^2(\mathbb{T})$ can be employed to reduce the above
equation to the set of linear equations
\begin{equation}\label{eq:linearEq}
    \sum_{n = 0}^{\infty} t_{k-n} h_n = S_{xy, k}, 
\end{equation}
where $h_k = \langle \phi_k, h\rangle, S_{xy,k} = \langle \phi_k, S_{xy}\rangle$, and 
$2\pi t_k = \int_{- \pi}^{\pi} \d{u} e^{i k u} S_{yy}\left(\tan u/2\right)$.
The system of linear equations (\ref{eq:linearEq}) can be then written as $\vec{T h} = \vec{s}$, where 
$\vec{T}$ is a symmetric Toeplitz matrix with entries $t_k$.
The formal solution of this infinite Toeplitz system, which can be shown to be unique 
almost everywhere (see Supplemental Information \cref{sec:Tpositive}), determines the causal Wiener filter. 
Clearly, this can be done without explicit Wiener-Hopf factorization, since the causality constraint has been
explicitly solved.

\newsec{Quality of approximate solution.} A practical solution of \cref{eq:linearEq} calls for a 
robust and accurate finite-dimensional approximation of it. 
The simplest is truncating $\vec{T}$ to the $n\times n$ matrix $\vec{T}^{(n)}$. 

First, we establish that the solution $\vec h^{(n)}$ of the truncated system is unique for every $n$ and 
that its numerical evaluation is robust.
The uniqueness of $\vec h^{(n)}$ follows from the positivity and therefore invertibility of $\vec T^{(n)}$ (see Supplemental Information \cref{sec:Tpositive}). 
The numerical robustness of the evaluation of $\vec h^{(n)}$ is controlled by the 
condition number $\kappa(\vec T^{(n)})$ --- the ratio of the largest to the smallest eigenvalues of $\vec{T}^{(n)}$. 
The best upper bound on the condition number of a Toeplitz matrix is 
\cite{BOTTCHER1998285,BOGOYA20151308,BOGOYA2016606} $\kappa(\vec T^{(n)}) \leq \sup[S_{yy}(\omega)]/\inf[S_{yy}(\omega)]$, which is valid only for $\inf S_{yy} > 0$
\footnote{
The bound from below can be established under general assumptions and takes the form
\begin{equation}\label{Kappa1}
	1+ \frac{v(\vec T_n)}{t_0} \leq \kappa(\vec{T}^{(n)}),
\end{equation}
where $v(\vec T^{(n)})^2 = 2 \sum_{k=1}^{n-1} t_k^2(n-k)/n$.
This lower bound is a direct consequence of bounds on eigenvalues based on the trace of the 
matrix \cite[Corollary 2.3]{WOLKOWICZ1980471}.
The bound on $\kappa(\vec T^{(n)})$ from above is 
\begin{equation}
	\kappa(\vec T^{(n)}) \leq \sup [S_{yy}(\omega)]/\inf [S_{yy}(\omega)] < \infty,
\end{equation}
where $\sup [S_{yy}(\omega)]$ and $\inf [S_{yy}(\omega)]$ are the upper and lower bounds on the maximal and minimal eigenvalues of $\vec T^{(n)}$ correspondingly (see Supplementary Information \cref{sec:Tpositive}).
}. 
The positivity of $\inf S_{yy}(\omega)$ is guaranteed by the choice of $f(\omega)$ that was made 
in \cref{TheoremBound}.

Since the approximate solution $\vec h^{(n)}$ is unique and robust, we proceed to show 
that it is accurate in the sense that $\norm*{\vec h^{(n)} - \vec h} \rightarrow 0$ 
as $n \rightarrow \infty$.
Let $\vec h^{(n)} = \{ h_k^{(n)} \} $, such that
the ``truncated" Wiener filter is $h^{(n)}(\omega) = \sum_{k = 0}^n h_k^{(n)} \phi_k(\omega)$. 
Establishing $\norm*{\vec h^{(n)} - \vec h} \rightarrow 0$ is equivalent to establishing 
$\norm*{h^{(n)}(\omega) \rightarrow h(\omega)}_{L^2(\mathbb{R})} \rightarrow 0$. 
Two types of terms contribute to the difference between $h^{(n)}(\omega)-h(\omega)$. 
The first type arises from a truncation of the sum that defines $h^{(n)}(\omega)$ over $k$ at finite $n$. 
The second contribution comes from the difference between the approximate and exact values of the 
coefficients $h_k$ and $h_k^{(n)}$ for $k \leq n$. 
Both contributions can be bounded using Parseval's theorem~\cite{Zygmund_2003} and shown to have a vanishing 
norm as $n \rightarrow \infty$. 
Thus the following theorem is established (see Supplementary Information \cref{sec:ProofTheorem4} for 
details).

\begin{theorem}\label{SolutionConvergence}
    Let $S_{xy}(\omega)$ and $S_{yy}(\omega)$ be such that $S_{xy}$ is an $L^2(\mathbb R)$ function, and $S_{yy}$ is a continuous, positive function that is bounded by positive numbers from above and below. Then, the causal filter $h(\omega)$ corresponding to data $(S_{xy}, S_{yy})$ is a member of $L^2(\mathbb R)$ and is unique.  
    The coefficients $h_k$ ($p\geq 0$) that determine the causal Wiener filter $h(\omega) = \sum_k h_k 
    \phi_k(\omega)$ are unique. The truncated filter $h^{(n)}(\omega)$, which solves the $n \times n$ truncation 
    of \cref{eq:linearEq}, converges to $h(\omega)$ in $L^2(\mathbb R)$ norm. Its expansion coefficients 
    $\{h_k^{(n)}\}$ converge to $h_k$ for every fixed $k$. 
\end{theorem}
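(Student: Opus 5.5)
The plan is to move everything to the unit circle through the isometry $g\mapsto\tilde g$ of \cref{thm:eigHilbert}. There the causal filters $\ker\mathcal G_-$ become the Hardy space $H^2\subset L^2(\mathbb T)$, spanned by $\{e^{\ii k u}\}_{k\ge 0}$. \Cref{eq:linearEq} then reads $P_+ (a\,\tilde h)=P_+\tilde S_{xy}$, where $a(u)=S_{yy}(\tan(u/2))$ and $P_+$ is the Riesz projection. In other words, $\vec T$ is the Toeplitz operator $T(a)$ on $\ell^2(\mathbb Z_{\ge0})$ with symbol $a$. The hypotheses give $a$ continuous on $\mathbb T$ (the limits at $u=\pm\pi$ coincide since $S_{yy}$ tends to a single positive constant at $\pm\infty$), with $0<m\le a\le M<\infty$. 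The right-hand side $\vec s=(S_{xy,k})_{k\ge0}$ lies in $\ell^2$ by Parseval, since $S_{xy}\in L^2(\mathbb R)$.

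\textbf{Existence and uniqueness.} For $\vec v\in\ell^2$ with $v(u)=\sum_{k\ge0}v_k e^{\ii k u}$, Parseval gives
\begin{equation*}
\langle \vec v,\vec T\vec v\rangle=\int_{-\pi}^{\pi}a(u)\,|v(u)|^2\,\frac{\d u}{2\pi}.
\end{equation*}
Hence $m\|\vec v\|^2\le\langle\vec v,\vec T\vec v\rangle\le M\|\vec v\|^2$, so $\vec T$ is bounded, self-adjoint and coercive on $\ell^2$. By Lax--Milgram it is invertible with $\|\vec T^{-1}\|\le 1/m$. Thus $\vec h=\vec T^{-1}\vec s\in\ell^2$ is the unique solution of \cref{eq:linearEq}, and $h=\sum_{k\ge0}h_k\phi_k\in L^2(\mathbb R)$ is causal. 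It satisfies \cref{Gplus,Gminus}, so by \cref{Theorem1} and the convexity argument before it, it is the causal Wiener filter. Strict convexity also gives uniqueness of the minimizer: $V_\eps$ has quadratic part $\int S_{yy}|h|^2\ge m\|h\|^2$. Uniqueness of the coefficients then follows from completeness and orthonormality of $\{\phi_k\}$.

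\textbf{Convergence of truncations.} Let $P_n$ denote projection onto the first $n$ coordinates, so $\vec T^{(n)}=P_n\vec T P_n$. The same quadratic-form bound restricted to $\mathrm{ran}\,P_n$ shows that $\vec T^{(n)}$ is invertible with $\|(\vec T^{(n)})^{-1}\|\le 1/m$, uniformly in $n$. This uniform stability is the key step, and it comes for free here because the symbol is positive. The standard argument of the finite-section method then follows. Set $\vec e^{(n)}=\vec h^{(n)}-P_n\vec h$. Since $\vec T^{(n)}\vec h^{(n)}=P_n\vec s=P_n\vec T\vec h$, we get
\begin{equation*}
\vec T^{(n)}\vec e^{(n)}=P_n\vec T(I-P_n)\vec h,
\end{equation*}
and therefore
\begin{equation*}
\|\vec e^{(n)}\|\le \frac{M}{m}\,\|(I-P_n)\vec h\|\to0.
\end{equation*}
Combining the two contributions described in the text,
\begin{equation*}
\|h^{(n)}-h\|_{L^2(\mathbb R)}=\|\vec h^{(n)}-\vec h\|\le \|\vec e^{(n)}\|+\|(I-P_n)\vec h\|\le\Bigl(1+\frac{M}{m}\Bigr)\|(I-P_n)\vec h\|,
\end{equation*}
which tends to zero because $\vec h\in\ell^2$. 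Coordinatewise convergence $h_k^{(n)}\to h_k$ is immediate, since $|h^{(n)}_k-h_k|\le\|\vec h^{(n)}-\vec h\|$.

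\textbf{Main obstacle.} I expect the delicate point to be the identification of \cref{Gplus} with the Toeplitz system. This means checking that $\langle\phi_k,S_{yy}\phi_n\rangle=t_{k-n}$ under the substitution: the weights $e^{\ii u/2}\sec(u/2)$ cancel between $\phi_k$ and $\phi_n$, leaving a pure multiplication by $a$. It also requires justifying the interchange of the infinite sum with multiplication by the bounded $S_{yy}$, which holds since multiplication by $S_{yy}$ is bounded on $L^2$. Continuity of $a$ is not essential for the argument beyond measurability and the bounds $m\le a\le M$.
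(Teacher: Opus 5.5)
Your proof is correct. Its convergence half is the paper's own argument in \cref{sec:ProofTheorem4}: you split off the tail $\sum_{k>n}h_k\phi_k$, then control the finite-section error by inverting $\vec T^{(n)}$ against the residual $P_n\vec T(I-P_n)\vec h$. That step uses $\|\vec T\|\le\sup S_{yy}$ and $\|(\vec T^{(n)})^{-1}\|\le 1/\inf S_{yy}$ from the quadratic form of \cref{sec:Tpositive}, and your bound $(1+M/m)\|(I-P_n)\vec h\|$ is exactly what it produces.

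You genuinely differ in how you obtain $\vec h\in\ell^2$. The paper gets uniqueness from the quadratic form but existence separately, in \cref{sec:HpSpace}: it identifies the system with $T(s_y)$ on $H^2$ and invokes the winding-number criterion \cref{thrm:ToeplitzInvertable}. That criterion needs $s_y$ continuous on $\mathbb T$, i.e.\ a common limit of $S_{yy}$ at $\pm\infty$ (and symmetry), and neither is a hypothesis of \cref{SolutionConvergence}. Your coercivity/Lax--Milgram step gives existence, uniqueness and $\|\vec T^{-1}\|\le 1/m$ at once from $m\le a\le M$ almost everywhere. So it proves the theorem exactly as stated. Your parenthetical claim that the hypotheses make $a$ continuous at $u=\pm\pi$ is not justified by the statement, but, as you note, nothing uses it. Your step also supplies attainment of the minimum, which \cref{sec:ProofTHeorem2} takes for granted once the infimum is finite.

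The paper's Toeplitz-algebra route pays off only later. Factorization in inverse-closed algebras is what \cref{trm:WienerAndSmoothWiener} needs to transfer invertibility to the Wiener and H\"older classes. The index criterion also covers non-positive symbols, for which coercivity fails.
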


\cref{TheoremBound,SolutionConvergence} together constitute a powerful tool that enables the 
robust and accurate computation of the causal Wiener filter without the need for Wiener-Hopf factorization. 
Better convergence of the approximate solution can be established if the data $(S_{xy},S_{yy})$ satisfy stronger 
conditions. An example is the following theorem, which realizes the intuition that the filter inherits similar
smoothness properties as the data. 

\begin{theorem}\label{trm:WienerAndSmoothWiener}
Let the data $(S_{xy},S_{yy})$ satisfy conditions of \cref{SolutionConvergence}. 
In addition, suppose that $\sum_{n = 0}^{\infty} |t_n|$ and $\sum_{n = 0}^{\infty} |S_{xy, n}|$ are both finite. Then, (i) the expansion $h(\omega) = \sum_{k\geq 0}h_k \phi_k (\omega)$ converges absolutely and uniformly; 
(ii) the truncated filter $h^{(n)}(\omega)$ converges to $h(\omega)$ everywhere uniformly; and, (iii) the 
coefficients $h_k^{(n)}$ converge to $h_k$ uniformly in $k$ as $n \rightarrow \infty$.

Suppose, in addition, there exists $0<\omega_0 < \infty$, such that data are of a H\"{o}lder class $C^{m, \alpha}$ for integer $m \geq 1$ and  $0 < \alpha <1$ on $[-\omega_0, \omega_0]$, and $S_{yy}(1/\omega)$ and $(1+ \ii/\omega)S_{xy}(1/\omega)$ are of a H\"older class $C^{m, \alpha}$ on $[-\omega_0^{-1}, \omega_0^{-1}]$.
Then, (i) $h(\omega)$ is also of a H\"{o}lder class $C^{m, \alpha}$ on $[- \omega_0, \omega_0]$ and $h(1/\omega)$ is of a H\"older class $C^{m, \alpha}$ on $[-\omega_0^{-1}, \omega_0^{-1}]$; 
(ii) $h_k = O(1/k^{m+\alpha})$;
(iii) the convergence of the truncated filter is controlled as 
$|h^{(n)}(\omega) - h(\omega)| < A n^{1-m-\alpha}$ for some $A >0$; and,
(iv) the convergence of the expansion coefficients is uniform in $k$ and controlled by 
$|h^{(n)}_k - h_k| < B n^{1-m-\alpha}$ for some $B>0$. 
\end{theorem}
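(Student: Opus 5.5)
Everything will be moved to the unit circle through the isometry $g\mapsto\tilde g$ of \cref{thm:eigHilbert}, so that the problem becomes one about Toeplitz operators with symbol $a(u)=S_{yy}(\tan(u/2))$. The right-hand side becomes $b(u)=\tilde S_{xy}(u)$, whose Fourier coefficients are $S_{xy,k}$. Under this map \cref{eq:linearEq} reads $T(a)\vec h=P_+\vec s$, and $h(\omega)=\sum_k h_k\phi_k(\omega)$ corresponds to the Fourier series of $\tilde h$. Since $\tilde h(u)=\sqrt\pi e^{\ii u/2}\sec(u/2)\,h(\tan(u/2))$, regularity of $\tilde h$ near $u=0$ corresponds to regularity of $h$ near $\omega=0$, and regularity near $u=\pm\pi$ corresponds to regularity of $h(1/\omega)$ near $0$. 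The prefactor $\sec(u/2)e^{\ii u/2}\propto 1/(1-\ii\omega)$ explains the factor $(1+\ii/\omega)$ in the hypotheses.

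\textbf{First part (Wiener algebra).} The assumptions $\sum|t_n|<\infty$ (with $t_{-n}=t_n$) and $\sum|S_{xy,n}|<\infty$ place $a$ and $b$ in the Wiener algebra $W$. By \cref{SolutionConvergence}, $a$ is bounded below by a positive number. Hence $a$ has winding number zero, and by Wiener's lemma together with the Krein/Gohberg--Feldman theory for $W$, $a$ admits a Wiener--Hopf factorization $a=a_-a_+$ with $a_\pm^{\pm1}\in W_\pm$. Consequently $T(a)$ is invertible on $\ell^1(\mathbb Z_+)$, and the inverse $T(a_+^{-1})T(a_-^{-1})$ is bounded there. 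This factorization is used only in the proof, not in the algorithm. It gives $\vec h\in\ell^1$, which yields absolute and uniform convergence of $\sum h_k\phi_k$, because $|\phi_k|\le\pi^{-1/2}$. For the truncations, the finite-section method for $T(a)$ with $a\in W$, $a>0$, is stable in $\ell^1$ as well: this is the Baxter--Gohberg--Feldman theorem, giving $\sup_n\|T_n(a)^{-1}\|_{\ell^1\to\ell^1}<\infty$. Writing $\vec h^{(n)}-P_n\vec h=T_n(a)^{-1}P_nT(a)(I-P_n)\vec h$ gives $\|\vec h^{(n)}-P_n\vec h\|_{\ell^1}\le C\sum_{k>n}|h_k|\to0$. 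This yields uniform convergence of $h^{(n)}$ and convergence of $h_k^{(n)}$ uniformly in $k$, since $\sup_k\le\ell^1$ norm.

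\textbf{Second part (Hölder).} I will first argue that the hypotheses make $a$ and $b$ belong to $C^{m,\alpha}(\mathbb T)$. The change of variables $\omega=\tan(u/2)$ is smooth and bijective near $u=0$ and, via $1/\omega$, near $u=\pm\pi$. Combining the local charts with continuity and positivity in between gives global Hölder regularity, provided the middle region is also $C^{m,\alpha}$; I will read the hypotheses as covering the whole line by the two overlapping intervals. Next, $C^{m,\alpha}$ with $\alpha\in(0,1)$ is an inverse-closed decomposing Banach algebra, because the Riesz projection is bounded on Hölder spaces (Privalov). Hence $a_\pm^{\pm1}\in C^{m,\alpha}$, and $\tilde h=a_+^{-1}P_+(a_-^{-1}b)\in C^{m,\alpha}$. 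Pulling back gives (i). Item (ii) follows from the standard estimate $|\hat g_k|\le C k^{-m-\alpha}$ for $g\in C^{m,\alpha}(\mathbb T)$, obtained by integrating by parts $m$ times and then applying the Hölder modulus bound.

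Then $\sum_{k>n}|h_k|\le C n^{1-m-\alpha}$. Inserting this tail into the $\ell^1$-stability estimate of the first part gives (iv) for $|h_k^{(n)}-h_k|$: for $k\le n$ one uses $\|\vec h^{(n)}-P_n\vec h\|_{\ell^1}$, and for $k>n$ one uses $|h_k|$. It also gives (iii) for $\sup_\omega|h^{(n)}-h|\le\pi^{-1/2}\|\vec h^{(n)}-\vec h\|_{\ell^1}$.

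The main obstacle I expect is the uniform $\ell^1$ stability of finite sections, $\sup_n\|T_n(a)^{-1}\|_{\ell^1}<\infty$. This is a genuinely nontrivial theorem (Baxter 1963; Gohberg--Feldman), which I would cite rather than reprove. Its hypothesis, invertibility of $T(a)$ and $T(\tilde a)$ on $\ell^1$, holds here because $a$ is real and positive, and therefore has zero winding. A secondary technical point is checking that the Hölder conditions stated in $\omega$ really give Hölder regularity on $\mathbb T$, including the correct weight $(1+\ii/\omega)$ at infinity.
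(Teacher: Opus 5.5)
Your proof is correct and shares the paper's skeleton (\cref{sec:WienerAndSmoothWiener}): transfer to $\mathbb T$, inverse-closedness of $\mathbb W$ and of $C^{m,\alpha}(\mathbb T)$ via a factorization $a=a_-a_+$, coefficient decay $h_k=O(k^{-m-\alpha})$, and the split of $h-h^{(n)}$ into a tail plus a finite-section discrepancy. The genuine difference is how you control $\vec h^{(n)}-P_n\vec h=(\vec T^{(n)})^{-1}P_nT(a)(I-P_n)\vec h$ in $\ell^1$.

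The paper obtains its $\ell^1$ bound \cref{eq:WH2normEst} from the eigenvalue bound $\lambda_{\min}(\vec T^{(n)})\ge\inf S_{yy}$ of \cref{sec:Tpositive}. That bound only controls $\|(\vec T^{(n)})^{-1}\|_{\ell^2\to\ell^2}$. Even for positive definite matrices, the $\ell^1\to\ell^1$ norm can exceed the spectral norm by a factor of order $\sqrt n$. So the paper's $\ell^1$ inequality is not justified by the reason it gives. Your appeal to Baxter--Gohberg--Feldman (uniform $\ell^1$ stability of finite sections for a zero-free, zero-winding symbol in $\mathbb W$; for a positive symbol every $T_n(a)$ is invertible, so the supremum is over all $n$) supplies exactly the missing bound. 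The price is importing a nontrivial external theorem instead of reusing \cref{sec:Tpositive}.

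For the H\"older rates alone, the paper's $\ell^2$ bound would actually suffice after Cauchy--Schwarz. Indeed $\|(I-P_n)\vec h\|_{\ell^2}=O(n^{1/2-m-\alpha})$, and the $\sqrt n$ loss returns $O(n^{1-m-\alpha})$. Uniform convergence in the first part, however, is not reachable this way, because there one only knows $\sum_{k>n}|h_k|\to0$; your $\ell^1$ stability is the natural tool.

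Your caveat about the two charts is also warranted. \cref{lem:HolderLineToCircle} glues only the H\"older seminorm of $\bar g^{(m)}$ at $u=\pm u_0$. It tacitly assumes the one-sided derivatives agree there, which $C^{m,\alpha}$ regularity on two closed pieces meeting at $|\omega|=\omega_0$ does not ensure when $m\ge1$.

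Two small slips, neither affecting the argument:
\begin{itemize}
\item $e^{\ii u/2}\sec(u/2)=1+\ii\omega$. It is this growth like $|\omega|$, not a factor $1/(1-\ii\omega)$, that produces the weight $(1+\ii/\omega)$.
\item The first-part hypotheses place only $P_+b$, not $b$, in $\mathbb W$. That is all you use, since $P_+(a_-^{-1}b)=P_+(a_-^{-1}P_+b)$.
\end{itemize}
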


The proof of this theorem, described in Supplemental Information \cref{sec:WienerAndSmoothWiener}, is not simple. 
It is however interesting that satisfying the conditions of \cref{trm:WienerAndSmoothWiener} requires the data to obey $2 \alpha_x - \alpha_y > 2$ and $2 \beta_x - \beta_y < 0$, which are stronger than the conditions
necessary for \cref{TheoremBound}.

\newsec{Finite-size effect of experimental data.}
In any experiment, the data $(S_{xy}(\omega), S_{yy}(\omega))$ is only ever known in a
finite frequency range, i.e. for $\omega \in [\omega_m,\omega_M]$. So far however, we assumed 
access to data at all frequencies. In order to reconcile with experimental reality, we now construct
a procedure to estimate and minimize the finite-size error arising in $\vec h^{(n)}$ from truncation 
in frequency space.
As before, we assume that the data $(S_{xy}, S_{yy})$ has undergone the transformation leading 
up to \cref{TheoremBound}.

The finite size effect produces a systematic error in $\vec h^{(n)}$ via errors in $\vec T^{(n)}$ 
and $\vec s$. We will now estimate both contributions, and show that it can be minimized by exploiting 
the frequency-scale of the data, which can be freely done for scale-free data. 
An arbitrary frequency scale $\omega_0$ implicitly exists in the definition of the variable substitution
$\omega = \omega_0 \tg(u/2)$, that maps the real frequency line to the unit circle. In the preceding
discussion, this scale $\omega_0$ was implicit, in the following, we make it explicit.

First we focus on the error due to those in $\vec{T}^{(n)}$. With finite-size data, its elements are given by
\begin{equation}\label{eq:tkCutoff}
	t_k = 2\int_{u_m}^{u_M} \frac{\d u}{2 \pi} \cos({k u}) \, S_{yy}\left(\omega_0\tan \frac{u}{2}\right),
\end{equation}
where $u_{m,M} = 2\tan^{-1}(\omega_{m,M}/\omega_0)$.
Assuming $\omega_m \ll \omega_0 \ll \omega_M$, we have that, $u_m \approx 2 \omega_m/\omega_0$, and  
$u_M \approx (\pi - \omega_0/\omega_M)$. 
The difference between $t_k$ evaluated in \cref{eq:tkCutoff} and its exact value is denoted as $\delta t_k$ and has a simple upper bound 
\begin{equation}\label{eq:estimate}
	|\delta t_n| < \frac{1}{\pi} \sup(S_{yy})(u_m + (\pi - u_M)).
\end{equation}
The finiteness of this bound is ensured by the finiteness of $\sup(S_{yy})$ guaranteed by \cref{TheoremBound}
for the transformed data.
Since cut-offs $u_m$ and $u_M$ are not independent, varying $\omega_0$ allows to minimize the upper bound in 
\cref{eq:estimate}: the optimal choice is $\omega_0 = \sqrt{\omega_m \omega_M}$, resulting in 
\begin{equation}\label{eq:tnerror}
|\delta t_n| \leq \frac{4}{\pi} \sup(S_{yy}) \sqrt{\frac{\omega_m}{\omega_M}}.
\end{equation}
Denoting the error in $\vec h^{(n)}$ that is inflicted by $\delta t_n$ as $\delta \vec h_y^{(n)}$, the relative error can be estimated as
\begin{equation}\label{eq:serror}
	\frac{||\delta \vec h_y^{(n)}||}{||\vec h^{(n)}||} \leq \frac{n |\delta t_n|}{\inf S_{yy}(\omega)} 
    \leq \frac{4 n \kappa(\vec{T}^{(n)})}{\pi} \sqrt{\frac{\omega_m}{\omega_M}}.
\end{equation}
The relative error is small when the truncation size of the matrix $\vec{T}$ 
is $n \ll n_{\max} =  \kappa^{-1}\sqrt{\omega_M/\omega_m}$. 
This leads to a practical prescription: the size $n$ of the truncated matrix $\vec T^{(n)}$ should not 
exceed $n_{\mathrm{max}}$.
This result is consistent with a basic intuition of Fourier analysis: data contains no meaningful information in the frequency range below the measurement duration and above the sampling rate.

\begin{figure*}[t!]
    \centering
    \includegraphics[width=1\linewidth]{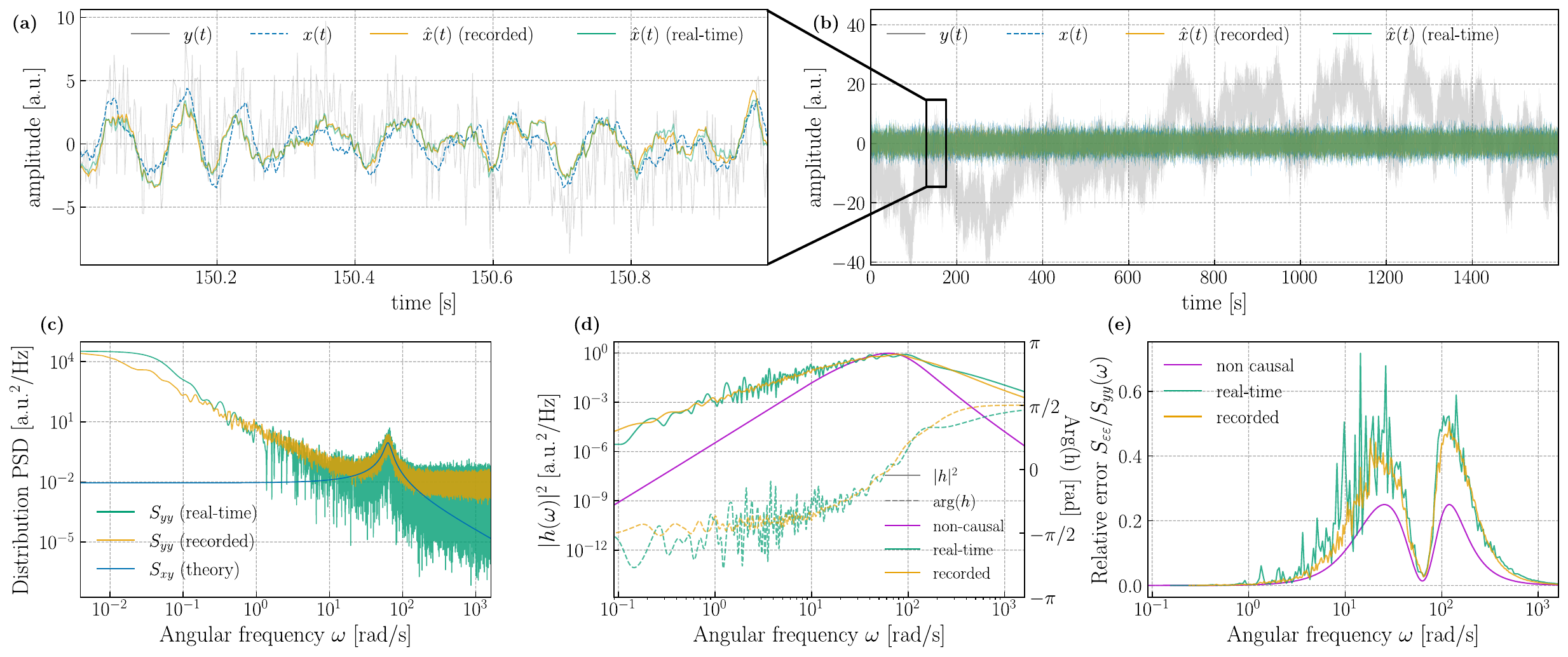}
    \caption{\label{fig}\textbf{Causal Wiener filtering of scale-free data.}  
    (a,b) Grey shows a simulated trajectory of the noisy measurement $y(t) = x(t) + n(t)$ for $1600$ s, sampled at 
    $512$. These simulations assume an underlying signal with spectrum
    assumed to be $S_{xx}=A \gamma^2/ ((|\omega|-\omega_c)^2+\gamma ^2)$ with 
    parameters $\gamma = 2 \pi$ rad/s, $A=0.9$, and $\omega_c = 10 \cdot 2\pi$ rad/s -- a peak centered at $2\pi \cdot 10$ rad/s with 
    half-width $2 \pi$ rad/s. 
    The noise was chosen such that $S_{nn}=5/\omega^{1.8}+0.01$. 
    (a) Zoomed-in time series of the measured signal $y(t)$, the true signal $x(t)$, and estimate $\hat{x}(t)$ from the recorded and real-time causal filters.
    (c) Estimates of the power spectral density $S_{yy}$ obtained from the simulated time series in (a) using
    Welch's method, with a Slepian window, 80–90\% segment overlap, and segment length chosen so that approximately eight Fourier bins spanned the target linewidth
    $S_{xy}$ is presumed known from a model.
    (d) Causal Wiener filter in the ``recorded'' and ``real-time'' scenarios (orange and green respectively),
    compared against the non-causal Wiener filter $S_{xy}/S_{yy}$. Solid lines show squared magnitude and dashed
    line the phase of the filter.
    The causal filters are computed following the algorithm laid out in the main text: data is transformed as prescribed by \cref{TheoremBound} with $f(\omega) = e^{\ii a /2} \omega^\beta (\ii \omega_0 + \omega)^{\alpha - \beta}$ with $\alpha=0$, $\beta=0.9$, and $\omega_0 = 5$ rad/s. 
    The scale frequency in \cref{eq:tkCutoff} was also chosen to be $5$ rad/s.
    (e) Relative error PSDs $S_{\eps\eps}/S_{yy}$ for the two causal filter scenarios and the non-causal filter. 
    $S_{\eps \eps}$ was estimated by Welch’s method to the residual $x(t)-\hat{x}(t)$, averaging 
    across approximately 250 logarithmically spaced bins in $\omega$, with outliers more than 10 
    standard deviations removed.
}
\end{figure*}

To estimate the error in $\vec h^{(n)}$ due to finite-size effects of $S_{xy}$, we first estimate the corresponding error $\delta \vec s$ arising in $\vec s$.
Such an error arises from the change in integral bounds in $s_n = \langle \phi_n, S_{xy} \rangle$, which can be bounded as follows.
\cref{TheoremBound} guarantees that 
there exist $0 \leq \bar B_x, \bar A_x < \infty$, such that $S_{xy}(\omega) \leq \bar B_x/|\omega|^{\beta_x - \beta_y/2}$ and  $S_{xy}(\omega) \leq \bar A_x/|\omega|^{\alpha_x - \alpha_y/2}$ in the vicinity of $\omega = 0$ and $\omega = \infty$ correspondingly. Then $\delta s_n$ can be bounded as
\begin{multline}
	|\delta s_n| < \frac{1}{\pi} \left( \frac{\bar B_x}{(\beta_x - \beta_y/2-1)u_m^{\beta_x - \beta_y/2-1}} + \right.
	\\
	 \left. + \frac{\bar A_x (\pi - u_M)^{\alpha_x - \alpha_y/2}}{(\alpha_x - \alpha_y/2)}\right).
\end{multline}
The resulting error in $\vec{h}^{(n)}$, denoted $\delta \vec h_x^{(n)}$, 
satisfies
\begin{equation}\label{eq:relhxerror}
	\frac{||\delta \vec h_x^{(n)}||}{||\vec h^{(n)}||} 
    \leq \kappa(\vec{T}^{(n)}) \frac{||\delta \vec s||}{||\vec s||} 
    \leq \sqrt{n} \kappa(\vec{T}^{(n)}) \frac{|\delta s_n|}{||\vec s||}.
\end{equation}

Comparing $\delta \vec h_y^{(n)}$ and $\delta \vec h_x^{(n)}$ against each other and against $\vec h^{(n)}$ is important for optimal selection of $\omega_0$. 
When $2 \beta_x - \beta_y< 0$ and $2 \alpha_x - \alpha_y >2$, the error $\delta h_x^{(n)}$ vanishes faster than $\delta h_y^{(n)}$ when $\omega _m \rightarrow 0$ and $\omega_M \rightarrow \infty$.
In this case, $\omega_0 = \sqrt{\omega_m \omega_M}$ remains an asymptotically optimal choice of the frequency scale. 
In the opposite case, the behaviour of both errors must be carefully examined case by case to minimize both of the errors.

\newsec{Filtering data of $1/\omega^\alpha$ form.}
Based on \cref{Theorem1,TheoremBound,thm:eigHilbert,SolutionConvergence,trm:WienerAndSmoothWiener}, we 
can now propose a powerful algorithm to compute the causal Wiener filter, with performance guarantees, 
for a large class of data $(S_{xy},S_{yy})$. The algorithm is as follows: 
\begin{enumerate}
\item Transform the original scale-free data as in \cref{TheoremBound} to obtain the modified 
    data $(S_{xy}, S_{yy})$.
\item Compute the matrix $\vec T^{(n)}$ and $\vec s$ using \cref{thm:eigHilbert}, 
    evaluate $\vec h^{(n)}$ for some $n$ and data $(S_{xy}, S_{yy})$.
\item Use \cref{SolutionConvergence,trm:WienerAndSmoothWiener} to obtain bounds on the difference between 
    $h^{(n)}(\omega)$ and the exact causal Wiener filter corresponding to $(S_{xy}, S_{yy})$.
\item Estimate $n_{\max}$ and errors from \cref{eq:serror,eq:relhxerror} to quantify the effects of finite data length.
\item Reconstruct the approximate causal Wiener filter for original data from $h^{(n)}(\omega)$.
\end{enumerate}
This procedure extends the reach of forecasting to a general class of problems relevant in science
and engineering, and especially the ubiquitous class of scale-free data that was not systematically 
amenable to prior techniques.

As an illustration of the above theory, we consider a typical example of forecasting a signal $x(t)$ 
from its noisy measurement $y(t)=x(t)+n(t)$, where $n(t)$ is noise with non-rational spectrum. 
The grey curve in \cref{fig}(a,b) shows a simulated realization of $y(t)$ corresponding to an assumed
signal and noise spectra.
The signal and noise were chosen to reflect forms representative of turbulence \cite{BruCarb13},
neural processes \cite{neuro_noise}, or mechanical oscillators in precision physics 
experiments \cite{GroAsp15,FedKip18,CripCorb19}: the noise
has a scale-free and a white component, and the signal
has a Lorentzian spectrum (see \cref{fig} caption for details). 

Simulating the downstream data processing procedure, we estimate the power spectrum $S_{yy}$ from the measured
$y(t)$ using Welch's method \cite{welch}. The cross-correlation spectrum $S_{xy}$ is assumed known from a model;
we chose, without loss of generality, that the signal and noise were orthogonal so that $S_{xy} = S_{xx}$.
This data $(S_{xy},S_{yy})$ is used to compute the causal Wiener filter following the approach described above.
As prescribed by \cref{TheoremBound}, the data is transformed by the function $f(\omega) = e^{\ii a /2} 
\omega^\beta (i \omega_0 + \omega)^{\alpha - \beta}$ (see \cref{fig} caption for details), 
resulting in a well-conditioned Toeplitz system. 
The system of linear equations was solved and the filter was reconstructed from a basis of 100 
eigenfunctions (more eigenfunctions produced negligible improvement).

Two scenarios of operations were considered. The ``recorded'' scenario presumes access to a long-run record of 
the data $y(t)$, from which the causal Wiener filter is computed, which is then applied to a different 
realization of $y(t)$. In the second, "real-time" scenario, the filter was constructed and applied sequentially to the same data stream by dynamically evaluating PSDs and the corresponding filter. The green (orange) curve
in \cref{fig}(a,b) shows the estimated signal $\hat{x}(t)$ in the real-time (recorded) scenario. These estimates
show qualitative agreement with the underlying signal $x(t)$ (blue dashed). \Cref{fig}(c) shows data $(S_{xy},
S_{yy})$ used to compute the filter, while \cref{fig}(d) shows the magnitude response of the filter. 
To ascertain goodness of forecast, \cref{fig}(e) depicts the spectrum of the error normalized to the spectrum
of measurement. 

Variance in the estimated error spectra, visible in the green trace in \cref{fig}(e), disappears when the filter
is computed from long-run data (orange). 
However, the filter $h(\omega)$ converged rapidly and robustly even when estimated from relatively short
lengths of measurement $y(t)$, as shown in \cref{fig}(d).
By computing the Wiener filter on the analytic form of the PSDs in Supplemental Information \cref{sec:ExtraData}, we also show that the sampling procedure applied to $S_{yy}$ had no significant effects on the filter performance, besides introducing sampling noise into the filter itself. In Supplemental Information \cref{sec:ExtraData}, we also show that the naive, and computationally-intensive, solution of \cref{Gplus,Gminus} on a lattice coincides with that 
obtained from an expansion into eigenfunctions. Finally, we empirically observed that when the signal-to-noise
ratio was small, transforming the data with the proper scaling function $f(\omega)$ improved accuracy 
and speed of convergence. In the opposite regime, since the impact of noise is minimal, not applying the transformation enhanced convergence. 

\textbf{Conclusion.} The problem of forecasting signals from noisy measurements where the noise is described
by non-rational or even scale-free spectra is an ubiquitous problem in science and engineering. So far, a 
systematic approach to the problem, and characterization of the solution, had remained elusive. We described
a systematic approach that exploits a variational formulation of the causal Wiener filter, and a simplifying
symmetry in that formulation. The symmetry allows a transformation of the problem to a pair of linear
operator equations in $L^2$. One of these equations imposes the causality constraint, which we explicitly solve
by diagonalizing the relevant operator in $L^2$. Thus, the conventional Wiener-Hopf factorization step is
obviated. The problem is thus reduced to the solution of a Toeplitz linear system. Various
practically relevant performance guarantees are derived for the solution of this system, including precise
conditions on the data that ensure the existence and convergence of the filter, and the effect of finite-length
data on the estimated filter.
In sum, we extend the reach of forecasting to the hitherto inaccessible realm of signals contaminated by noise with
scale-free and/or non-rational spectra. Finally, by the duality between estimation 
and control \cite{KalBuc61,YouBonJab76,MitNew00}, the technique outlined here for causal estimation from noise with 
non-rational spectra can be employed to design controllers for stochastic distributed systems.
These should find applications in domains as disparate as neuroscience, 
fluid dynamics, geophysics, quantum physics, and even finance. 

\textbf{Acknowledgements.} 
We thank Manuel Bogoya for a discussion on the theory of Toeplitz operators, and George Verghese
for his general wisdom.
This research is funded in part by an NSF CAREER award (PHY–2441238) and by 
the Gordon and Betty Moore Foundation (grant GBMF13780).

\bibliography{../refs_wiener}


\clearpage
\appendix

\renewcommand{\thetheorem}{\thesection.\arabic{theorem}}
\setcounter{theorem}{0}

\section{Minimization of a convex functional on a complex domain}\label{sec:Minimization}

The arguments leading up to \cref{Geqs} technically involves the minimization of a functional ($V_\eps[h]$) 
whose domain is the set of complex functions ($h$). It is more common \cite{EkeTem76,Barbu2012-pn} to consider
the minimization of a functional whose domain is the set of real functions. Here we show how to bridge this gap.
First, we define a complex-$\cc$ Banach space, which will be useful below.
\begin{definition}
	Complex-$\cc$ Banach space $X$ is a Banach space over the complex field that is equipped 
    with a conjugation operation $\cc$.
	The~$\cc$~operation is (i) a continuous involution on $X$ that obeys $(c_1 x_1 + c_2 x_2)^\cc = c_1^\cc x_1^\cc 
    + c_2^\cc x_2^\cc$ for all $x_{1,2} \in X$ and $c_{1,2} \in \mathbb C$; and
    (ii) $||x^\cc||_X = ||x||_X$ for all $x \in X$.
\end{definition}

Given a complex-$\cc$ Banach space $X$, a real Banach subspace can be constructed as follows. Define the
continuous linear operators $\Re[x] \eqdef (x + x^*)/2$ and $\Im[x] = (x-x^*)/(2\ii)$ for all $x \in X$. 
Then the space $X_r \eqdef \{ x \in X \mid \Im[x] = 0 \}$ is a real linear vector subspace of $X$, which
is complete (since it is the kernel of a continuous linear operator Im \cite[Theorem 5.25]{Hunter2001-zm}),
and has a norm induced by that on $X$; i.e. $X_r$ is a real Banach subspace of $X$. 
A useful property of complex-$\cc$ Banach spaces is that it can be represented as two copies of real Banach spaces.
\begin{lemma}
	Let $X$ be a complex-$*$ Banach space, and let $X_r = \Re[X]$, a real Banach space. 
    Spaces $X$ and $X_r \oplus X_r$ are isomorphic as linear spaces, with the isomorphism given 
    by $x \rightarrow (\Re[x], \Im[x])$. 
    Moreover, the norm $||x||_X$ is equivalent to $||\Re[x]||_{X_r}+||\Im [x]||_{X_r}$ for all $x \in X$. 
\end{lemma}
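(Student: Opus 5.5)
The map $\Phi: x \mapsto (\Re[x], \Im[x])$ should be a linear isomorphism with an explicit inverse, and the norm equivalence should follow from the triangle inequality together with the isometry of $\cc$. Throughout I regard $X$ and $X_r\oplus X_r$ as real vector spaces, since $X_r$ is only a real space; multiplication by $\ii$ on $X$ then corresponds to $(a,b)\mapsto(-b,a)$.

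\emph{Well-definedness.} I first check that $\Re[x]$ and $\Im[x]$ lie in $X_r$. By the conjugate-linearity and involution properties,
\[
\Re[x]^\cc = (x^\cc + x)/2 = \Re[x], \qquad \Im[x]^\cc = (x^\cc - x)/(-2\ii) = \Im[x].
\]
Since $\Im[y]=0$ is equivalent to $y^\cc = y$, both components belong to $X_r$. Real linearity of $\Phi$ follows from the linearity of $\Re$ and $\Im$ over $\mathbb{R}$.

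\emph{Inverse.} Define $\Psi(a,b) = a + \ii b$ for $a,b\in X_r$. Then $\Psi\Phi(x) = \Re[x] + \ii\Im[x] = x$ identically. Conversely, for $a^\cc=a$ and $b^\cc=b$,
\[
(a+\ii b)^\cc = a - \ii b,
\]
so $\Re[a+\ii b]=a$ and $\Im[a+\ii b]=b$. Hence $\Phi$ is a bijection, and indeed a linear isomorphism. One could also note that $X_r=\Re[X]$ as sets, because $\Re[a]=a$ for every $a \in X_r$.

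\emph{Norm equivalence.} For the upper bound,
\[
\|\Re[x]\| \le (\|x\|+\|x^\cc\|)/2 = \|x\|,
\]
using property (ii), and likewise $\|\Im[x]\|\le \|x\|$. Therefore $\|\Re[x]\|+\|\Im[x]\|\le 2\|x\|$. For the lower bound,
\[
\|x\| = \|\Re[x]+\ii\Im[x]\| \le \|\Re[x]\|+\|\Im[x]\|.
\]
Together these give $\tfrac12(\|\Re x\|+\|\Im x\|)\le\|x\|\le \|\Re x\|+\|\Im x\|$, which is the claimed equivalence. It also follows that $\Phi$ is a homeomorphism.

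There is no real obstacle here. The only point needing care is verifying that $\Im[x]$ is $\cc$-invariant, which requires tracking the conjugation of the scalar $1/(2\ii)$, and keeping in mind that the isomorphism is over $\mathbb{R}$ rather than $\mathbb{C}$.
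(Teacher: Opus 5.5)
Your proof is correct and follows the same route as the paper, whose entire argument is that the isomorphism ``can be checked explicitly'' and that the norm equivalence follows from $\|x\|_X \le \|\Re[x]\|_X + \|\Im[x]\|_X \le 2\|x\|_X$. You supply the details the paper omits: that $\Re[x]$ and $\Im[x]$ are $\cc$-invariant, the explicit inverse $(a,b)\mapsto a+\ii b$, and the use of $\|x^\cc\|_X=\|x\|_X$ for the upper bound; your remark that the isomorphism is one of real vector spaces is a sensible clarification of the paper's phrasing.
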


The isomorphism can be checked explicitly.
The equivalence of norms follows from $||x||_{X} \leq ||\Re [x]||_{X} + ||\Im [x]||_X \leq 2||x||_X$. 

The following definitions and theorems describes the notion of a (Gateaux) derivative of a functional 
on a Banach space, and how that characterizes the minimum of the functional when it is convex.
\begin{definition}
Let $X$ be a complex-$\cc$, and let  $f : X \rightarrow \mathbb R \cup \{+\infty\}$ be a proper convex functional. 
The Gateaux differential of $f$ in the direction of $h \in X$ at the point $x \in X$ is
\begin{equation}
	\delta f[x; h] \eqdef \lim_{\lambda \rightarrow 0} \frac{f[x+ \lambda h] - f[x]}{\lambda}, \quad \lambda \in \mathbb R.
\end{equation}

If $X$ is a real Banach space, the functional $f$ is called Gateaux-differentiable at point $x$, 
if the map $h \mapsto \delta f[x; h]$ is a continuous linear functional on $X$.
The functional $f$ is called Gateaux-differentiable if it is Gateaux-differentiable at all points $x \in X$.

If $X$ is a complex-$\cc$ Banach space, the functional $f$ is called conformally Gateaux-differentiable at 
point $x$, if there exists a unique continuous complex-linear functional $h \mapsto \delta f'[x, h]$, such 
that $\delta f[x; h] = 2\Re \delta f'[x; h]$ for all $h$.
The functional $f$ is called conformally Gateaux-differentiable if it is conformally Gateaux-differentiable at all points $x \in X$.
\end{definition}

The following theorem governs the properties of the minima of Gateaux differentiable functionals.
\begin{theorem}[Proposition 2.40, \cite{Barbu2012-pn}]\label{thrm:GateauxDifReal}
Let $X$ be a real Banach space and let $f : X \rightarrow \mathbb R$ be a proper convex Gateaux-differentiable functional.
Then $f[x] = \inf_{u \in X} f[u]$ for $x \in X$, if and only if $ \delta f[x; h] = 0$ for all $h \in X$.
\end{theorem}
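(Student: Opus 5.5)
This is the standard first-order optimality criterion for convex functionals (Proposition 2.40 of \cite{Barbu2012-pn}), so I would prove both directions directly, using convexity along line segments.

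\emph{Necessity.} Suppose $x$ is a minimizer. Fix $h \in X$ and consider the one-variable function $\varphi(\lambda) = f[x+\lambda h]$ for $\lambda \in \mathbb R$. It has a global minimum at $\lambda = 0$. For $\lambda > 0$ the difference quotient $(\varphi(\lambda)-\varphi(0))/\lambda$ is therefore nonnegative, and for $\lambda<0$ it is nonpositive. Gateaux differentiability means the two-sided limit exists and equals $\delta f[x;h]$. Hence $\delta f[x;h] \ge 0$ and $\delta f[x;h]\le 0$, so $\delta f[x;h]=0$. This direction does not use convexity.

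\emph{Sufficiency.} Suppose $\delta f[x;h]=0$ for all $h$, and take any $u\in X$ with $f[u]<\infty$; if $f[u]=+\infty$ the inequality $f[x]\le f[u]$ is trivial. Set $h=u-x$. For $\lambda\in(0,1]$, convexity gives
\begin{equation}
f[x+\lambda h]\le (1-\lambda)f[x]+\lambda f[u].
\end{equation}
Rearranging yields $(f[x+\lambda h]-f[x])/\lambda \le f[u]-f[x]$. Letting $\lambda\to 0^+$, the left side tends to $\delta f[x;h]=0$, so $f[x]\le f[u]$. Since $u$ was arbitrary, $x$ is a global minimizer.

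The only delicate point is finiteness. We need $f[x]<\infty$ for the difference quotients to make sense, and this is implicit in $f$ being Gateaux differentiable at $x$. Properness of $f$ rules out the value $-\infty$. In this setting the functional is real-valued, so neither issue actually arises. Continuity and linearity of $h\mapsto\delta f[x;h]$ are not needed in either direction; they matter only later, when the result is transferred to the complex-$\cc$ setting via the decomposition $X\cong X_r\oplus X_r$.
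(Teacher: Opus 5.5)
Your proof is correct. The paper gives no proof of this statement; it imports it from \cite{Barbu2012-pn}. In that kind of treatment the result belongs to the subdifferential calculus: a point $x$ minimizes a proper convex $f$ iff $0\in\partial f(x)$, and at a point of Gateaux differentiability the subdifferential reduces to the single functional $h\mapsto\delta f[x;h]$.

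Your route bypasses subgradients. For sufficiency you prove directly the one inequality behind that identification, $f[u]-f[x]\ge\delta f[x;u-x]$, by convexity along the segment from $x$ to $u$. For necessity you compare the signs of the one-sided difference quotients of $\lambda\mapsto f[x+\lambda h]$, using that the two-sided limit exists.

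What your approach buys is a short, self-contained argument that shows exactly which hypotheses are used. Convexity enters only in the sufficiency direction. Gateaux differentiability is needed only as existence of the two-sided directional limit; linearity and continuity of $h\mapsto\delta f[x;h]$ play no role. The citation route, by contrast, places the statement inside a general framework that also covers non-differentiable convex functionals. Your side remarks on finiteness are harmless but superfluous, since the statement takes $f$ real-valued.
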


The notion of complex-$\cc$ Banach spaces allows for a trivial generalization of \cref{thrm:GateauxDifReal} in such a way that allows to avoid dealing with the nuissance of $\delta f[x; h]$ not being a complex-linear functional.
\begin{theorem}\label{thrm:GateauxDifComplex}
	Let $X$ be a complex-$\cc$ Banach space and let $f : X \rightarrow \mathbb R$ be a proper convex, conformally Gateaux-differentiable functional.
Then $f[x] = \inf_{u \in X} f[u]$ for $x \in X$, if and only if $\delta f'[x, h] = 0$ for all $h \in X$.
\end{theorem}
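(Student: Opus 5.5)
The plan is to reduce the statement to the real case, \cref{thrm:GateauxDifReal}, using the lemma that identifies $X$ with $X_r\oplus X_r$ through $x\mapsto(\Re[x],\Im[x])$. Equip $Y \eqdef X_r\oplus X_r$ with the norm $\|(a,b)\|_Y=\|a\|_{X_r}+\|b\|_{X_r}$. By the lemma this norm is equivalent to $\|\cdot\|_X$, so $Y$ is a real Banach space. Define $g:Y\to\mathbb R$ by $g(a,b)\eqdef f[a+\ii b]$.

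First I will check that $g$ satisfies the hypotheses of \cref{thrm:GateauxDifReal}.
\begin{itemize}
\item \emph{Convexity.} Convexity of $f$ is a statement about real convex combinations, and the isomorphism is real-linear. Hence $g$ is convex, and it is proper because $f$ is.
\item \emph{Gateaux derivative.} For real $\lambda$, the isomorphism sends $(a,b)+\lambda(p,q)$ to $x+\lambda h$, where $x=a+\ii b$ and $h=p+\ii q$. Therefore $\delta g[(a,b);(p,q)]=\delta f[x;h]=2\Re\,\delta f'[x;h]$.
\item \emph{Real-linearity and continuity.} The map $h\mapsto\delta f'[x;h]$ is complex-linear and continuous on $X$. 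So $h\mapsto 2\Re\,\delta f'[x;h]$ is real-linear and continuous on $X$, and composing with the inverse isomorphism (continuous, by the norm equivalence) gives a continuous real-linear functional on $Y$.
\end{itemize}
Thus $g$ is Gateaux-differentiable. Also $\inf_Y g=\inf_X f$, since the map is a bijection.

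By \cref{thrm:GateauxDifReal}, $x$ minimizes $f$ if and only if $\Re\,\delta f'[x;h]=0$ for all $h\in X$. It remains to upgrade this to $\delta f'[x;h]=0$, using the same trick as in the main text.
\begin{itemize}
\item Suppose $\Re\,\delta f'[x;h]=0$ for all $h$. Apply it to $\ii h\in X$: complex-linearity gives $\Re\,\delta f'[x;\ii h]=\Re(\ii\,\delta f'[x;h])=-\Im\,\delta f'[x;h]=0$. Hence $\delta f'[x;h]=0$.
\item Conversely, $\delta f'[x;h]=0$ trivially implies that its real part vanishes.
\end{itemize}
This establishes the equivalence.

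The main point needing care is that continuity and the Banach property transfer correctly. That rests on the norm equivalence in the lemma, and on the fact that $\Re,\Im$ are continuous, since the conjugation $\cc$ is a continuous isometric involution. Everything else is bookkeeping. One might also worry about the ``proper'' hypothesis if $f$ were allowed to take the value $+\infty$. Here $f$ is real-valued, so this causes no issue.
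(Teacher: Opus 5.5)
Your proposal is correct and follows the paper's proof essentially step for step. Like the paper, you pull $f$ back to $\bar f[a,b]=f[a+\ii b]$ on $X_r\oplus X_r$, identify its Gateaux differential as $2\Re\,\delta f'$, invoke \cref{thrm:GateauxDifReal}, and then upgrade $\Re\,\delta f'\equiv 0$ to $\delta f'\equiv 0$ by testing against $\ii h$, where the paper uses $-\ii h$. If anything, you are slightly more careful than the paper about the Banach structure of $X_r\oplus X_r$ and the continuity of the pulled-back differential.
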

\begin{proof}
The idea of the proof consists in mapping the problem from $X$ into $X_r \oplus X_r$, and then applying \cref{thrm:GateauxDifReal}.
Since $X$ is a complex-$\cc$ space, it is isomorphic to $X_r \oplus X_r$.
Under the isomorphism, we define a functional $\bar f: X_r \oplus X_r \rightarrow \mathbb R$, such that $\bar f[x, y] = f[x + \ii y]$. 
Functional $\bar f$ is a proper-convex functional on $X_r \oplus X_r$ and $\bar f[x, y]$ achieves minimum if and only if $f[x+\ii y]$ does so.
Moreover, $\bar f$ is Gateaux-differentiable on $X_r \oplus X_r$ if $f$ is conformally Gateaux-differentiable on $X$.
Indeed, for $h = h_x + \ii h_y$, such that $h_x, h_y \in X_r$, the Gateaux differential of $\bar f$ is
\begin{equation}\label{eq:DiffCon}
	\delta \bar f[x, y; h_x, h_y] = 2 \Re(\delta f'[x+ \ii y, h_x + \ii h_y]).
\end{equation}
which is a continuous real-linear functional of $(h_x, h_y)$ due to continuity and complex-linearity of $\delta f'$.
Therefore, by \cref{thrm:GateauxDifReal}, point $(x, y)$ is an infimum of $\bar f[x, y]$ if and only if $\delta \bar f[x, y, h_x, h_y] = 0$ for all $h_x, h_y \in X_r$.

Now we establish that $\delta f'$ vanishes if and only if $\delta \bar f$ vanishes.
First, $\Re(\delta f'[x+ \ii y, h_x + \ii h_y]) = 0$ for all $h_x, h_y \in X_r$ due to \cref{eq:DiffCon}.
On the other hand, $\Im (\delta f'[x+ \ii y, h_x + \ii h_y]) = \Re (\delta f'[x+ \ii y, -\ii h_x + h_y]) = 0$,
since $\Re (\delta f'[x+ \ii y, -\ii h_x + h_y]) = \delta \bar f[x, y; h_y, - h_x]$, and $\delta \bar f$ vanishes identically.
Therefore, $\delta f'[x+\ii y, h]$ vanishes if and only if $\delta \bar f[x, y, h_x, h_y]$ does.

As a result, $\delta f'[x+\ii y, h]$ vanishes if and only if $f[x+\ii y]$ attains the minimal possible value.
\end{proof}

As far as the main text is concerned, we note that $L^2(\mathbb R)$ and $\ker \mathcal G_\pm$, 
are all Hilbert spaces with a naturally defined $\cc$-operation, and are therefore complex-$\cc$ Banach spaces.
Therefore, $\delta V_\eps[h_*, \delta h] = 0$ for all $\delta h \in \ker \mathcal G_-$ and $h_*$ 
is the necessary and sufficient condition that satisfies
\cref{Opt}, according to \cref{thrm:GateauxDifComplex}.


\section{Proof of \cref{TheoremBound}}
\label{sec:ProofTHeorem2}

The proof of \cref{TheoremBound} involves establishing the existence and square-integrability of $h(\omega)$
derived from the data $(S_{xy},S_{yy})$ transformed with $f(\omega)$.
When proven, these properties turn out sufficient to satisfy assumptions of \cref{Theorem1} on modified data.

First, we establish the existence of $h(\omega)$ for given data $(S_{xy}, S_{yy})$ that satisfies the assumptions of \cref{TheoremBound}. 
To find an optimal causal filter $h(\omega)$, we seek a minimum of $V_{\eps}[h]$ given by \cref{vare} in the space of measurable functions.
Necessarily, $h(\omega)$ has to be measurable, otherwise most of the quantities of the form of \cref{eq:See} are ill-defined in the context of integration. 

The filter $h(\omega)$ exists if and only if $\inf_h[V_\eps[h]]$ is finite, where the infimum is taken over all functions that are analytic in the upper half plane.
The function that realizes the infimum is the optimal filter.
To show that the optimal $h(\omega)$ necessarily exists, we bound $\inf_h[V_\eps[h]]$ from above by a finite number by using the properties of $(S_{xx}, S_{xy}, S_{yy})$. 
The first term in $S_{\eps \eps}(\omega)$ given by \cref{eq:See} has no dependence on $h$, and therefore the optimal filter cannot explicitly depend on $S_{xx}(\omega)$. 
Additionally, adding and subtracting a constant from $V_{\eps}[h]$ does not change the value of the optimal filter $h(\omega)$ either.
We subtract $S_{xx} - |S_{xy}|^2/S_{yy}$ from $S_{\eps\eps}$, which does not change the value of the optimal filter, and  rewrite $V_\eps[h]$ as
\begin{equation}\label{eq:AppVeps}
	V_\eps[h] = \int \frac{\dd \omega}{2 \pi} S_{yy}(\omega) \left| h^\cc(\omega) - \frac{S_{xy}(\omega)}{S_{yy}(\omega)} \right|^2.
\end{equation}
\cref{eq:AppVeps} has a finite value infimum over $h$ whenever $|S_{xy}|^2/S_{yy}$ is integrable, because 
\begin{equation}
	0 \leq \inf_h V_{\eps}[h] \leq V_{\eps}[h = 0] = \int \frac{\dd \omega}{2 \pi} \frac{|S_{xy}(\omega)|^2}{S_{yy}(\omega)} < \infty.
\end{equation}
For the data that satisfies the assumptions of \cref{TheoremBound}, $|S_{xy}|^2/S_{yy}$ is always integrable, as it is guaranteed by inequalities $2 \alpha_x - \alpha_y > 1$ and $2 \beta_x - \beta_y < 1$. 
Therefore, optimal $h(\omega)$ with a finite $V_\eps[h]$ exists for the data $(S_{xy}, S_{yy})$ that satisfies the assumptions of \cref{TheoremBound}.

With the existence of $h(\omega)$ established, we proceed to establish the asymptotics of the transformed data
$S_{xy}' = f^\cc S_{xy} , S_{yy}' = \abs{f}^2 S_{yy}$.
It is straightforward to observe that 
after the transformation by $f(\omega)$, 
with asymptotics $|f(\omega \rightarrow \infty)|\sim |\omega|^{\alpha}$ and 
$|f(\omega\rightarrow 0)| \sim |\omega|^{\beta}$, the asymptotics of the data is 
\begin{align}
	S_{yy}^\prime(\omega \rightarrow \infty) &\sim \frac{A_y}{|\omega|^{\alpha_y - 2 \alpha}},
	\\
	S_{yy}^\prime(\omega \rightarrow 0) &\sim \frac{B_y}{|\omega|^{\beta_y - 2 \beta}},
	\\
	S_{xy}^\prime(\omega \rightarrow \infty) &= O(|\omega|^{-\alpha_x + \alpha}),
	\\
	S_{xy}^\prime(\omega) &= O(|\omega|^{-\beta_x + \beta}).
\end{align} 
Choosing $\alpha = \alpha_y/2$ and $\beta = \beta_y /2$, as articulated by the main text, yields
\begin{align}
	S_{yy}^\prime(\omega \rightarrow \infty) &\sim A_y, \label{eq:SyyPrimeInf}
	\\
	S_{yy}^\prime(\omega \rightarrow 0) &\sim B_y, \label{eq:SyyPrimeZero}
	\\
	S_{xy}^\prime(\omega \rightarrow \infty) &= O(|\omega|^{-\alpha_x + \alpha_y/2}), 
	\\
	S_{xy}^\prime(\omega \rightarrow 0) &= O(|\omega|^{-\beta_x + \beta_y/2}),
\end{align} 
which are the expressions in \cref{eq:Thrm2Asymp} of the main text.

To prove the square-integrability of $S_{xy}^\prime$ and the positivity of $S_{yy}^\prime$, we scrutinize 
these functions more closely.
Since $S_{xy}$ is continuous and $f(\omega)$ is analytic in the upper half plane, $S_{xy}^\prime$ is also continuous, and therefore locally integrable on all intervals that don't include $\omega = 0, \pm \infty$. 
Since $ -\alpha_x + \alpha_y /2< -1/2$, and $- \beta_x + \beta_y/2 > - 1/2$, the function $S_{xy}^\prime$ is of a sufficiently fast fall-off at $\omega = \infty$ and sufficiently slow growth at $\omega = 0$ to be locally integrable in the neighbourhood of these points.
Therefore $S_{xy}^\prime$ is in $L^2(\mathbb R)$.
Since $S_{yy}$ is positive and finite away from $\omega = 0, \pm \infty$, and $f(\omega)$ has no zeros in the upper half-plane, the transformed data $S_{yy}^\prime$ is also positive and finite everywhere, except 
perhaps $\omega = 0, \pm \infty$. 
Since $S_{yy}^\prime$, obeys \cref{eq:SyyPrimeInf,eq:SyyPrimeZero} and $A_{y}, B_y > 0$, it follows 
that $S_{yy}^\prime$ is continuous and positive at $\omega = 0, \infty$.
As a result, $S_{yy}^\prime$ is continuous and bounded by positive numbers from above and below.

Finally, we analyse the behaviour of the optimal filter $h^\prime$ that corresponds to transformed data $(S_{xy}^\prime, S_{yy}^\prime)$. 
Substitution of $h = f h^\prime$ into \cref{eq:AppVeps} results in
\begin{equation}\label{eq:SeeSquare}
    V^\prime_{\eps}[h^\prime] \eqdef \int\limits_{-\infty}^{+\infty} \frac{\d{\omega}}{2 \pi} S_{yy}^\prime \left| h^{\prime \cc} - \frac{S_{xy}^\prime}{S_{yy}^\prime} \right|^2.
\end{equation}
As it was established above, $V^\prime_{\eps}[h^\prime] = V_\eps[h]$ is necessarily finite, and therefore the integrand of \cref{eq:SeeSquare} is integrable. 
Necessarily, $h^{\prime\cc} - S_{xy}^\prime / S_{yy}^\prime$ is square-integrable, since $S_{yy}^\prime$ is a continuous positive function bounded by a positive number from below. 
Since $S_{xy}^\prime/S_{yy}^\prime$ is also square-integrable,  $h^\prime$ itself must be square-integrable and therefore a member of $L^2(\mathbb R)$.
Integrability of $S_{yy}^\prime |h^\prime|^2$ and $S_{xy}^\prime h^{\prime\cc}$, and square-integrability of $S_{yy}^\prime h^\prime$ are simple consequences of the square-integrability of $h^\prime$.

The properties of $(S_{xy}^\prime, S_{yy}^\prime)$ and its optimal causal filter $h^\prime$ that were established above satisfy assumptions of \cref{Theorem1}.
Therefore, $h^\prime$ can be found as a solution to \cref{Gplus,Gminus} for transformed data $(S_{xy}^\prime, S_{yy}^\prime)$. 
This concludes the proof of \cref{TheoremBound}.


\section{Proof of \cref{thm:eigHilbert}}\label{app:eigHilbert}

Consider a function $g \in L^2(\mathbb R)$ and the map
\begin{equation}\label{eq:map}
	g(\omega) \mapsto \tilde g(u) = \sqrt{\pi} e^{ \ii u/2} \sec(u/2) g\left(\tg(u/2)\right),
\end{equation}
whose domain is the unit circle $\mathbb{T}$.
Under the map $g \mapsto \tilde g$, the set of eigenfunctions $\phi_n(\omega)$ in \cref{eq:Heig} gets mapped to 
$\tilde \phi_n(u) = e^{\ii nu}$. 
This map preserves the (hermitian) inner product in the sense that for $g_1, g_2 \in L^2(\mathbb R)$ 
and their images $\tilde g_1, \tilde g_2  \in L^2(\mathbb T)$,
\begin{equation}\label{normconv}
	\int_{-\infty}^{+\infty} \d \omega \, g_1^\cc(\omega) g_2(\omega) = \int_{-\pi}^{\pi} \frac{\d u}{2 \pi} \, \tilde g_1^\cc(u) \tilde g_2(u).
\end{equation}

The set $\{\tilde \phi_n\}$ for $n \in \mathbb Z$ is a Fourier basis and forms a complete orthonormal set in $L^2(\mathbb T)$. 
The properties of the Fourier basis guarantee convergence in norm of the Fourier series for a function $\tilde g(u)$:
\begin{equation}\label{eq:MappedExpansion}
	\tilde g(u) = \sum_{n = -\infty}^{+\infty} \tilde g_n \tilde \phi_n(u), \quad \tilde g_n = \int_{-\pi}^{\pi} \frac{\dd u}{2 \pi} \, \tilde g(u) \tilde \phi_n^*(u).
\end{equation}
Additionally, the series in \cref{eq:MappedExpansion} converge pointwise almost everywhere by Carleson-Hunt theorem \cite{carleson1966convergence,Hunt1968,Jorsboe1982-ef}.

Since by construction the map $g \rightarrow \tilde{g}$ is a bijection between $L^2(\mathbb R)$ and $L^2(\mathbb T)$ that also preserves the (hermitian) inner product, the series for the pre-image $g(\omega)$ also converges almost everywhere:
\begin{equation} \label{expansion}
	g(\omega) = \sum_{n = -\infty}^{+\infty} g_n \phi_n(\omega),
\end{equation}
where
\begin{equation}\label{coefs}
	g_n \equiv \int_{-\infty}^{+\infty} \d \omega \, g(\omega) \phi_n^\cc(\omega) = \int_{-\pi}^{\pi} \frac{\d u}{2\pi} \, \tilde g(u) \tilde \phi_n^\cc(u) \equiv \tilde g_n.
\end{equation}
Therefore, $\{ \phi_n \}$ is a complete orthonormal set in $L^2(\mathbb R)$, and this completes the proof of \cref{thm:eigHilbert}.


\section{Positive-definiteness of $\vec T$ and $\vec T^{(n)}$}\label{sec:Tpositive}

The uniqueness of the solution to $\vec T \vec h = \vec s$ and the positive-definiteness of the truncated Toeplitz matrix $\vec T^{(n)}$ can be proven with an identical approach. 

To establish some preliminaries, consider
an arbitrary function $g(\omega)$, and the vector $\vec g = \{ g_k = \langle \phi_k, g 
\rangle_{L^2(\mathbb R)} \}$ for $k \geq 0$.
For two functions $g_1(\omega)$ and $g_2(\omega)$, such that $g_{1,k}, g_{2,k} = 0$ for $k<0$, the Hilbert space inner product can be rewritten as $\vec g_1^\dagger \vec g_2 = \langle g_1, g_2 \rangle_{L^2(\mathbb R)}$. 
Setting $g_1(\omega) = g(\omega)$ and $g_2(\omega) = S_{yy}(\omega) g(\omega)$ gives
\begin{equation}\label{eq:AppfRel}
    \langle g, S_{yy} g \rangle_{L^2(\mathbb{R})} = \vec g^\dagger \vec T \vec g.
\end{equation}

With these preliminaries in place, we are ready to tackle the uniqueness of $\vec h$ with a proof by contradiction.
Assume there exists two distinct solutions to $\vec T \vec h = \vec s$, denoted as $\vec h_1$ and $\vec h_2$; 
then, necessarily, $\vec T(\vec h_1 - \vec h_2) = 0$. 
For $g(\omega) = h_1(\omega) - h_2(\omega)$, the relation in \cref{eq:AppfRel} becomes
\begin{equation}\label{eq:Apph1h2}
    \langle h_1 - h_2, S_{yy}(h_1 - h_2) \rangle_{L^2(\mathbb R)} 
    = (\vec h_1 - \vec h_2)^\dagger \vec T (\vec h_1 - \vec h_2) = 0.
\end{equation}
The left side of this equation can be estimated by using the fact that $\inf S_{yy}(\omega) > 0$: 
\begin{equation}
\begin{split}
    0 &= \langle h_1 - h_2, S_{yy}(h_1 - h_2) \rangle_{L^2(\mathbb R)} \\
    &= \int \dd \omega \, S_{yy}(\omega) |h_1(\omega) - h_2(\omega)|^2 \\
    &\geq \inf(S_{yy}) \int \dd \omega \, |h_1(\omega) - h_2(\omega)|^2 \geq 0.
\end{split}
\end{equation}
This immediately implies that $\int \dd \omega \, |h_1 - h_2|^2 = 0$, and therefore $h_1(\omega) - h_2(\omega) = 0$ almost everywhere. 
Consequently, it is necessarily true that $\vec h_1 = \vec h_2$ and the solution to $\vec T \vec h = \vec s$ is unique, and the causal filter $h(\omega)$ is unique almost everywhere. 
This concludes the proof of the uniqueness of $\vec h$.

The positive-definiteness of $\vec T^{(n)}$ can be proven using an analogous approach.
Consider a function $g^{(n)}(\omega)$ with a corresponding vector $\vec g^{(n)}$, such that $g_k = 0$ for all $k>n$ and $k < 0$. 
Then it is true that
\begin{equation}
    \vec g^{(n)\dagger} \vec T^{(n)} \vec g^{(n)} = \vec g^{(n)\dagger} \vec T \vec g^{(n)} = \langle g^{(n)}, S_{yy} g^{(n)}\rangle.
\end{equation}
Since $\inf S_{yy}(\omega) > 0$ due to \cref{TheoremBound}, the following estimate holds:
\begin{equation}
    \vec g^{(n)\dagger} \vec T^{(n)} \vec g^{(n)} \geq \inf(S_{yy}) \langle g^{(n)}, g^{(n)} \rangle = \inf(S_{yy}) \vec g^{(n)\dagger} \vec g^{(n)}.
\end{equation}
Since $\inf(S_{yy}) > 0$, the inequality above implies that $\vec T^{(n)}$ is positive definite matrix with 
its smallest eigenvalue not less than $\inf S_{yy}(\omega)$.

A argument also shows that the largest eigenvalue of $\vec T^{(n)}$ is not greater than $\sup S_{yy}(\omega)$.
The same conclusions about the minimal and maximal eigenvalues are true for $\vec T$ as an infinite-dimensional matrix.


\section{Membership of causal filter in $L^2(\mathbb R)$}
\label{sec:HpSpace}

\cref{Theorem1} requires that $h, S_{yy}h, S_{xy} \in L^2(\mathbb{R})$, which insures that the causal
filter $h$ is solved by the pair of operator equations [\cref{Gplus,Gminus}]: 
\begin{align}
    \mathcal{G}_+[h S_{yy}] & = \mathcal{G}_+[S_{xy}] \label{appGplus} \\
    \mathcal{G}_-[h] &= 0 \label{appGminus}.
\end{align}
\cref{TheoremBound} ensures that any data that satisfies its assumptions can be cast into new 
data $(S_{xy}', S_{yy}')$ which determines a causal filter $h'$, such that these satisfy the assumptions of \cref{Theorem1}, and solves \cref{appGplus,appGminus}.
However, since the proof of \cref{TheoremBound} in \cref{sec:ProofTHeorem2} produces the modified data through 
an argument that does not account for \cref{appGplus,appGminus}, these theorems only imply that the solution
$h$ must be sought in $L^2(\mathbb{R})$ and do not guarantee that the solution of \cref{appGplus,appGminus}
belongs to this space.

This section explicitly verifies that the solution of \cref{appGplus,appGminus} is indeed
consistent with \cref{Theorem1,TheoremBound}.
Specifically, we shall prove that if $S_{xy} \in L^2(\mathbb R)$ and $S_{yy}$ is continuous 
with $0 < \inf S_{yy} \leq \sup S_{yy} < \infty$, then there is always a unique $h \in L^2(\mathbb R)$ that satisfies \cref{appGplus,appGminus}.
The modified data of \cref{TheoremBound} satisfies the property above, and therefore allows for a 
self-consistent solution of the filter.

The proof is essentially a transcription of recent results in the theory of Toeplitz 
operators \cite{Bottcher2006-nn} for functions on the unit circle $\mathbb{T}$, via the isometric
isomorphism in \cref{eq:map}, to Toeplitz operators for functions on $\mathbb{R}$. This bridge is established
by the following lemma.
\begin{lemma}\label{lem:TildeMap}
    The map $\sim: L^2(\mathbb R) \rightarrow L^2(\mathbb T)$, affecting the transformation $g \mapsto \tilde{g}$ 
    in \cref{eq:map} is an isometric isomorphism 
    which maps the Fourier basis $\{ e^{\ii k u} \}$ on $L^2(\mathbb T)$ to the basis 
    $\{ \phi_k \}$ on $L^2(\mathbb R)$.
    Further, given the function $g \in L^2(\mathbb R)$ and $S\in L^\infty(\mathbb R)$, 
    $\widetilde{S g}  = s \tilde g$, where $s(u) = S(\tan u/2)$ and $s \in L^\infty(\mathbb T)$.
\end{lemma}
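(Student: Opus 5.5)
The proof is a direct change of variables, organized in three steps: the measure, unitarity and the basis correspondence, and the multiplication rule.

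\textbf{Measure under the substitution.} Put $\omega = \tan(u/2)$, so $u\in(-\pi,\pi)$, $\dd\omega = \tfrac12\sec^2(u/2)\,\dd u$, and $\sec(u/2)>0$ on this interval. For $g_1,g_2\in L^2(\mathbb R)$ with images defined by \cref{eq:map}, the phase factor $e^{\ii u/2}$ cancels in $\tilde g_1^\cc \tilde g_2$. This gives
\begin{equation*}
\int_{-\pi}^{\pi}\frac{\dd u}{2\pi}\,\tilde g_1^\cc\tilde g_2 = \int_{-\pi}^{\pi}\frac{\dd u}{2}\sec^2(u/2)\, g_1^\cc g_2(\tan(u/2)) = \int_{\mathbb R}\dd\omega\, g_1^\cc g_2 ,
\end{equation*}
which is \cref{normconv}. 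In particular $\sim$ is an isometry, and it is well defined on equivalence classes because the substitution is a diffeomorphism and so maps null sets to null sets.

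\textbf{Surjectivity and the basis.} The map has the explicit inverse $\tilde g\mapsto g(\omega)= \pi^{-1/2} e^{-\ii u(\omega)/2}\cos(u(\omega)/2)\,\tilde g(u(\omega))$ with $u(\omega)=2\tan^{-1}\omega$. Running the same computation backwards shows this inverse is also isometric, so $\sim$ is a unitary bijection, i.e. an isometric isomorphism. For the basis statement, set $e^{\ii u/2}=z$. Then $\tan(u/2)=\omega$ gives $\frac{1+\ii\omega}{1-\ii\omega} = e^{\ii u}$ and $\frac{1}{1-\ii\omega} = \cos(u/2)e^{\ii u/2}$. Substituting into \cref{eq:eigHilbert} gives $\tilde\phi_k(u)=\sqrt\pi\,e^{\ii u/2}\sec(u/2)\,\pi^{-1/2}\cos(u/2)e^{\ii u/2}e^{\ii k u}$. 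This is off by a factor $e^{\ii u}$, which would shift the index by one, so the map and the formula for $\phi_k$ must be reconciled. I would redo this carefully, for instance by checking whether $1/(1-\ii\omega)=\cos(u/2)e^{-\ii u/2}$ rather than $e^{+\ii u/2}$. Indeed $1-\ii\tan(u/2) = e^{-\ii u/2}/\cos(u/2)$, so $1/(1-\ii\omega)=\cos(u/2)e^{\ii u/2}$. The check therefore has to be made against the paper's conventions; whichever phase convention makes \cref{eq:map} send $\phi_k\mapsto e^{\ii k u}$ is the one to fix. I expect this sign and phase bookkeeping to be the only real obstacle. Once the convention is fixed, the images $e^{\ii k u}$ form the orthonormal Fourier basis, and pulling back by a unitary map gives the basis $\{\phi_k\}$ of $L^2(\mathbb R)$.

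\textbf{Multiplication rule.} The rule is immediate, since the prefactor $\sqrt\pi e^{\ii u/2}\sec(u/2)$ is common to both sides:
\begin{equation*}
\widetilde{Sg}(u)=\sqrt\pi e^{\ii u/2}\sec(u/2)\,S(\tan(u/2))\,g(\tan(u/2)) = s(u)\tilde g(u).
\end{equation*}
For the membership claim, note that $\operatorname{ess\,sup}|s| = \operatorname{ess\,sup}|S|$ because the substitution preserves null sets, so $s\in L^\infty(\mathbb T)$. It also follows that $Sg\in L^2(\mathbb R)$, since $\|Sg\|\le\|S\|_\infty\|g\|$, so $\widetilde{Sg}$ is defined.
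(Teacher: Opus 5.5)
Your route is the paper's. Its proof simply defers to the substitution $\omega=\tan(u/2)$ in \cref{app:eigHilbert} for the isometry and calls the multiplication rule explicit. Your inner-product computation, the explicit inverse (which the paper only asserts), the identity $\widetilde{Sg}=s\tilde g$, and $\operatorname{ess\,sup}|s|=\operatorname{ess\,sup}|S|$ are all correct.

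The one step you leave open is the basis correspondence. There your algebra is right and the paper is wrong. With the phase $e^{+\ii u/2}$ of \cref{eq:map}, the prefactor is $\sqrt{\pi}\,e^{\ii u/2}\sec(u/2)=\sqrt{\pi}(1+\ii\omega)$. Hence $\tilde\phi_k=\bigl(\tfrac{1+\ii\omega}{1-\ii\omega}\bigr)^{k+1}=e^{\ii(k+1)u}$, which contradicts the claim in \cref{app:eigHilbert} that $\phi_n\mapsto e^{\ii n u}$.

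You should not leave this as ``whichever convention'', because the choice is forced. Take the phase $e^{-\ii u/2}$, i.e.\ $\tilde g(u)=\sqrt{\pi}(1-\ii\omega)g(\omega)$. Then $\tilde\phi_k=e^{\ii k u}$ for every $k$, and nothing else in your argument changes: $|e^{\pm\ii u/2}|=1$ in the isometry computation, and the prefactor is common to both sides of $\widetilde{Sg}=s\tilde g$.

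Re-indexing the $\phi_k$ instead would not work. Under the $e^{+\ii u/2}$ map, the span of the upper-half-plane functions $\{\phi_k\}_{k\ge 0}$ goes to $\overline{\mathrm{span}}\{e^{\ii n u}\}_{n\ge 1}$, whatever labels you put on them. So $\widetilde{\mathcal G_+}$ would be the projection onto $e^{\ii u}H^2$ rather than the Riesz projector $P$, which breaks \cref{lem:TildeOperatorMap}. For example, $\widetilde{\mathcal G_+[\phi_{-1}]}=0$, while $P[\tilde\phi_{-1}]=P[1]=1$.

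Read purely as a statement about sets, the lemma already holds with the paper's convention, since $k\mapsto k+1$ is a bijection of $\mathbb Z$. But the index-by-index version is what \cref{sec:HpSpace} actually uses. To finish the proof, state the corrected phase and the resulting $\tilde\phi_k=e^{\ii k u}$ explicitly.
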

\begin{proof}
    The isometry is established in \cref{app:eigHilbert}. 
    The last property can be verified explicitly.
\end{proof}

We will now relate \cref{appGplus,appGminus} to the theory of Toeplitz operators in $\mathbb{T}$ 
\cite{Bottcher2006-nn} by introducing some machinery.
The definitions and theorems from \cite{Bottcher2006-nn} that are relevant to this section will be stated without
proof for completeness, but rephrased in our language.
As a point of departure, we introduce the Riesz projector:
\begin{definition}[\S 1.42 \cite{Bottcher2006-nn}]
    The Riesz projector $P~:~L^2(\mathbb T)~\rightarrow~L^2(\mathbb T)$ is defined, for $n\in \mathbb{Z}$, by
    \begin{equation}
        P[e^{\ii n u}] = 
            \begin{cases}
                e^{\ii n u}, &n \geq 0 \\
                0, & n < 0 
            \end{cases}
    \end{equation}
Since the set of all finite Fourier sums is dense in $L^2(\mathbb T)$, the action of $P$ on any member 
of $L^2(\mathbb T)$ is defined by linearity.
\end{definition}

\noindent The Riesz projector is an orthogonal projector, and therefore defines a subspace of $L^2(\mathbb T)$ that is also a Hilbert space:
\begin{definition}[\S 1.39 \cite{Bottcher2006-nn}]\label{DefHardy}
    The Hardy space $H^2(\mathbb T) \subset L^2(\mathbb{T})$ is defined by the subset of Fourier series 
    whose negative coefficients vanish, i.e.
    \begin{equation}
        H^2(\mathbb T) = \{ g \in L^2(\mathbb T) \mid P[g] = g \}.
    \end{equation}
\end{definition}

\noindent Below we denote $H^2(\mathbb T)$ simply as $H^2$.
Hardy space is a suitable space to define the Toeplitz operators:
\begin{definition}[\S 2.6 \cite{Bottcher2006-nn}]\label{defRiesz}\label{app:Tcf}
    Given a function $c \in L^\infty(\mathbb T)$, the Toeplitz operator of $c$ is a linear operator $T(c):H^2 \rightarrow H^2$ defined as $T(c)[f] \eqdef P[c f]$. 
\end{definition}

\noindent With the definitions laid out above, we are ready to translate the problem defined by 
\cref{appGplus,appGminus} into the language of Toeplitz operators on $H^2(\mathbb{T})$ \cite{Bottcher2006-nn}. 
First, we describe the transformation of $\mathcal G_+$ by the map $\sim$.
\begin{lemma}\label{lem:TildeOperatorMap}
    The isometry $\sim: L^2(\mathbb R) \rightarrow L^2(\mathbb T)$ induces an isometry 
    $A \mapsto \tilde{A}$ 
    on the space of bounded linear operators, defined by $\tilde A[\tilde g] = \widetilde{A[g]}$ 
    for all $g \in L^2(\mathbb R)$. 
    This induced map takes the projector $\mathcal{G}_+$ to the Riesz projector $P$.
\end{lemma}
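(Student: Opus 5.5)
The plan is to define the induced map by conjugation, $\tilde A \eqdef U A U^{-1}$, where $U: g \mapsto \tilde g$ is the unitary of \cref{lem:TildeMap}. This definition is well posed because $U$ is a bijective isometry between $L^2(\mathbb R)$ and $L^2(\mathbb T)$. It also satisfies $\tilde A[\tilde g] = U A[g] = \widetilde{A[g]}$ by construction.

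The isometry on operators is then immediate from the definition of the operator norm. Since $\|U x\| = \|x\|$ for every $x$, we get $\|U A U^{-1} v\| = \|A U^{-1} v\|$. The map $v \mapsto U^{-1} v$ is a bijection of unit spheres, so $\|\tilde A\| = \|A\|$. Conjugation is also linear, multiplicative, and invertible (via $B \mapsto U^{-1} B U$), so the induced map is an isometric isomorphism of the algebras of bounded operators.

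For the identification $\tilde{\mathcal G}_+ = P$, I would compare the two operators on the Fourier basis and extend by density.
\begin{itemize}
\item By \cref{lem:TildeMap}, $U\phi_k = e^{\ii k u}$.
\item By \cref{eq:eigG}, which follows from \cref{thm:eigHilbert}, $\mathcal G_+\phi_k = \phi_k$ for $k\ge 0$ and $\mathcal G_+\phi_k = 0$ for $k<0$.
\item Combining these, $\tilde{\mathcal G}_+[e^{\ii k u}] = U\mathcal G_+\phi_k$ equals $e^{\ii k u}$ for $k \geq 0$ and $0$ for $k<0$. This is exactly $P[e^{\ii k u}]$.
\end{itemize}
Both $\tilde{\mathcal G}_+$ and $P$ are bounded linear operators. $\tilde{\mathcal G}_+$ is bounded because $\mathcal G_+$ is bounded (the Hilbert transform is bounded on $L^2(\mathbb R)$) and the conjugation preserves norms. $P$ is an orthogonal projector. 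The two operators agree on the finite Fourier sums, which are dense in $L^2(\mathbb T)$, so they agree everywhere by continuity.

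The only nontrivial input is the eigenrelation \cref{eq:Heig}, i.e. $\mathcal H[\phi_k] = -\ii\,\t{sgn}(k)\phi_k$; the rest is bookkeeping. If I had to verify it independently, I would argue by contour integration. For $k \ge 0$, $\phi_k$ has its only pole at $\omega = -\ii$, so it is analytic in the upper half plane and decays like $1/\omega$. Closing the principal-value integral in the upper half plane then gives the boundary-value relation $\phi_k = \ii\mathcal H[\phi_k]$. For $k<0$, $\phi_k$ is analytic in the lower half plane, and the sign flips. The one point needing care is consistency of sign conventions between $\mathcal H$ as defined in the paper and the labeling of which half plane is "causal". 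I would check that case explicitly for $k=0$ by a direct residue computation.
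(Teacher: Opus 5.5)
Your proposal is correct and is essentially the paper's own proof: define $\tilde A = UAU^{-1}$, get norm preservation from the unitarity of $U$, and identify $\tilde{\mathcal G}_+$ with $P$ by comparing them on the basis $\phi_k \mapsto e^{\ii k u}$ via \cref{eq:eigG}; your explicit boundedness-plus-density step makes precise what the paper's ``both are bases'' leaves implicit. One caveat applies equally to the paper: taken literally, the factor $e^{\ii u/2}$ in \cref{eq:map} sends $\phi_k$ to $e^{\ii (k+1)u}$, which would make $\tilde{\mathcal G}_+$ the projection onto the span of $e^{\ii m u}$, $m \geq 1$, rather than $P$; so the correspondence $U\phi_k = e^{\ii k u}$ you quote from \cref{lem:TildeMap} requires the factor $e^{-\ii u/2}$ instead, i.e.\ $\tilde g(u) = \sqrt{\pi}\,(1-\ii\omega)\,g(\omega)$ at $\omega = \tan(u/2)$.
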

\begin{proof}
The $\sim$ map is bijective, so $g$, the preimage of $\tilde g$, always exists and is unique.
Thus  $\tilde A$ is unambiguously defined by $\tilde A[\tilde g] = \widetilde{A[g]}$.
Since $\sim$ is bijective and linear, the map $A \mapsto \tilde A$ is also bijective and linear for all 
operators $A$; since $\sim$ preserves the inner product, the map $A \mapsto \tilde{A}$ preserves operator norm. 

It is left to prove that $\mathcal G_+$ is mapped to $P$.
Due to \cref{lem:TildeMap}, the basis $\{ \phi_k(\omega) \}$ is bijectively mapped onto the Fourier basis 
$\{ e^{\ii k u} \}$.
Explicit calculation shows that $\widetilde{\mathcal G_+[\phi_k]} = P[e^{\ii ku}]$
for all $k \in \mathbb Z$. 
Therefore $\widetilde{\mathcal{G}_+} = P$, since both $\{ \phi_k \}$ and $\{ e^{\ii ku} \}$ are bases 
in their Hilbert spaces.
\end{proof}

The following theorem establishes the equivalence between \cref{appGplus,appGminus} and the 
Toeplitz operator in \cref{app:Tcf}.
\begin{theorem}\label{thrm:ToeplitzEquivalence}
    Let $S_{xy}, h \in L^2(\mathbb R)$ and $S_{yy}(\omega)$ be continuous such that $\sup S_{yy} < \infty$.
    Then, the pair of \cref{appGplus,appGminus}
     is equivalent to the following equation for $\tilde h \in H^2$:
    \begin{equation}\label{eq:appToeplitz}
        T(s_y)[\tilde h] = P[\tilde S_{xy}],
    \end{equation}
    where $s_y(u) \eqdef S_{yy}(\tan u/2)$.
\end{theorem}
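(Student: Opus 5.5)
The argument transports both equations to the unit circle with the isometric isomorphism $\sim$ of \cref{lem:TildeMap}, and uses \cref{lem:TildeOperatorMap} to convert $\mathcal G_\pm$ into $P$ and $I-P$. First I note that all objects are well defined. Since $S_{yy}$ is continuous with $\sup S_{yy}<\infty$, it lies in $L^\infty(\mathbb R)$, so $s_y\in L^\infty(\mathbb T)$. Also $S_{yy}h\in L^2(\mathbb R)$ whenever $h\in L^2(\mathbb R)$. Applying $\sim$ to $S_{yy}h$ and using the multiplication property of \cref{lem:TildeMap} gives
\begin{equation}
\widetilde{S_{yy}h}=s_y\tilde h .
\end{equation}
Since $\mathcal G_-=\mathcal I-\mathcal G_+$ and the induced map $A\mapsto\tilde A$ is linear with $\tilde{\mathcal I}=I$, we get $\widetilde{\mathcal G_-}=I-P$.

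Next I translate \cref{appGminus}. Because $\sim$ is injective, $\mathcal G_-[h]=0$ holds if and only if $\widetilde{\mathcal G_-[h]}=(I-P)[\tilde h]=0$. This is equivalent to $P[\tilde h]=\tilde h$, that is, $\tilde h\in H^2$ by \cref{DefHardy}. Equivalently, the coefficients $h_k=\langle\phi_k,h\rangle$ vanish for $k<0$, which is \cref{eq:eigG} combined with the coefficient identity \cref{coefs}.

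Then I translate \cref{appGplus}. Again by injectivity and linearity of $\sim$, it holds if and only if $\widetilde{\mathcal G_+[hS_{yy}]}=\widetilde{\mathcal G_+[S_{xy}]}$. By \cref{lem:TildeOperatorMap} and the identity $\widetilde{S_{yy}h}=s_y\tilde h$, this reads $P[s_y\tilde h]=P[\tilde S_{xy}]$. When $\tilde h\in H^2$, the left side is by definition $T(s_y)[\tilde h]$ (\cref{app:Tcf}), which gives \cref{eq:appToeplitz}.

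For the converse, suppose $\tilde h\in H^2$ solves \cref{eq:appToeplitz}. I take $h$ to be the unique preimage of $\tilde h$, which lies in $L^2(\mathbb R)$. Reading the two steps above backwards recovers \cref{appGminus} from $\tilde h\in H^2$ and \cref{appGplus} from the Toeplitz equation.

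The only step needing care is the multiplication identity $\widetilde{Sg}=s\tilde g$. The factor $\sqrt\pi e^{\ii u/2}\sec(u/2)$ in \cref{eq:map} depends only on $u$, so it commutes with multiplication by $S(\tan u/2)$. I would verify this pointwise almost everywhere and note that $\|s\tilde g\|\le\|S\|_\infty\|\tilde g\|$, so everything remains in $L^2(\mathbb T)$. This is routine, so there is no substantive obstacle; the main bookkeeping point is that $T(s_y)$ is defined only on $H^2$, which is exactly why \cref{appGminus} must be encoded as membership $\tilde h\in H^2$.
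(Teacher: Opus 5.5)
Your proposal is correct and follows essentially the same route as the paper: you transport both equations with the isometry $\sim$, use $\widetilde{\mathcal G_+}=P$ (hence $\widetilde{\mathcal G_-}=I-P$) to turn $\mathcal G_-[h]=0$ into $\tilde h\in H^2$, and use $\widetilde{S_{yy}h}=s_y\tilde h$ to turn $\mathcal G_+[hS_{yy}]=\mathcal G_+[S_{xy}]$ into $T(s_y)[\tilde h]=P[\tilde S_{xy}]$. Your explicit treatment of the converse direction through the bijectivity of $\sim$ is a small but welcome addition that the paper leaves implicit.
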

\begin{proof}
First, we establish that the inclusion $\tilde h \in H^2$ is equivalent to \cref{appGminus}. 
\cref{lem:TildeMap} implies that $\tilde h \in L^2(\mathbb T)$, so this result should be further improved.
According to \cref{lem:TildeOperatorMap}, \cref{appGminus} is mapped into $(1 - P)[\tilde h] = 0$, since $\mathcal G_- = 1 - \mathcal G_+$.
Thus by \cref{defRiesz}, $\tilde h \in H^2$. 

Next, we show that \cref{appGplus} is equivalent to \cref{eq:appToeplitz}.
Due to \cref{lem:TildeOperatorMap}, application of $\sim$ map to \cref{appGplus} yields $P[\widetilde{S_{yy} h}] = P[\tilde S_{xy}]$.
Further, the relation $\widetilde{S_{yy} h} = s_y \tilde h$ holds due to \cref{lem:TildeMap}. 
Finally, $s_y \in L^\infty(\mathbb T)$ due to $\sup s_y = \sup S_{yy} < \infty$.
As a result, $T(s_y)$ is a valid Toeplitz operator.
Both sides of \cref{eq:appToeplitz} are members of $H^2$, since $\tilde h, \tilde S_{xy} \in L^2(\mathbb T)$.
This verifies that \cref{appGplus,appGminus} are equivalent to \cref{eq:appToeplitz}. 
\end{proof}

\cref{thrm:ToeplitzEquivalence} implies that solving \cref{appGplus,appGminus} is equivalent to the 
invertibility of $T(s_y)$. 
The invertibility of linear operators requires boundedness: a bounded operator is invertible if and only if it is bijective, and its two-sided inverse is bounded.
The following lemma establishes the norm of Toeplitz operators on $H^2$.
\begin{lemma}[\S 2.8 \cite{Bottcher2006-nn}]\label{lem:ToeplitzBounded}
    The Toeplitz operator $T(c):H^2 \rightarrow H^2$ for $c \in L^\infty (\mathbb{T})$ is bounded; in
    particular the operator norm $\norm{T(c)}_{H^2} = \norm{c}_{L^\infty(\mathbb T)} < \infty$.
\end{lemma}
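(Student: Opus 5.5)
The claim is that for $c\in L^\infty(\mathbb T)$ the Toeplitz operator $T(c)f = P[cf]$ on $H^2$ satisfies $\norm{T(c)} = \norm{c}_{L^\infty}$. The plan is to prove the inequality $\le$ directly and then the reverse inequality through a localization argument.

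For the upper bound, note that multiplication by $c$ is a bounded operator $M_c$ on $L^2(\mathbb T)$. For any $g$,
\[
\norm{cg}_{L^2}^2=\int_{-\pi}^{\pi}\frac{\dd u}{2\pi}\,|c|^2|g|^2\le \norm{c}_\infty^2\norm{g}_{L^2}^2 .
\]
The Riesz projector $P$ is an orthogonal projector, since it acts diagonally on the orthonormal Fourier basis, so $\norm{P}\le 1$. Restricting $P M_c$ to $H^2$ then gives $\norm{T(c)}\le\norm{c}_\infty$.

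For the lower bound, I would use the matrix picture. The matrix of $T(c)$ in the basis $\{e^{\ii k u}\}_{k\ge0}$ has entries $(T(c))_{jk}=c_{j-k}$. Shifting indices by $N$ leaves these entries unchanged. Let $S$ be the shift $f\mapsto e^{\ii u}f$, an isometry on $H^2$ whose adjoint satisfies $S^{*N}S^N=I$. Then
\[
S^{*N}T(c)S^N=T(c)\quad\text{for all }N .
\]
Now take trigonometric polynomials $f,g$ in $L^2(\mathbb T)$. For $N$ large, $e^{\ii Nu}f$ and $e^{\ii Nu}g$ lie in $H^2$, and therefore
\[
\langle T(c)e^{\ii Nu}f,\,e^{\ii Nu}g\rangle=\langle c f,\, g\rangle ,
\]
because $P$ acts as the identity when paired against an element of $H^2$. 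This gives $|\langle cf,g\rangle|\le\norm{T(c)}\,\norm f\,\norm g$. Trigonometric polynomials are dense in $L^2(\mathbb T)$, so $\norm{M_c}_{L^2\to L^2}\le\norm{T(c)}$.

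The remaining step, which I expect to be the main obstacle, is showing $\norm{M_c}=\norm{c}_\infty$. I would handle it by localization. Fix $\varepsilon>0$ and let $E=\{u:|c(u)|>\norm c_\infty-\varepsilon\}$, which has positive measure by the definition of the essential supremum. Let $\chi_E$ be its indicator function. Then
\[
\norm{c\chi_E}_{L^2}\ge(\norm c_\infty-\varepsilon)\norm{\chi_E}_{L^2} .
\]
This gives $\norm{M_c}\ge\norm c_\infty-\varepsilon$, and letting $\varepsilon\to0$ yields $\norm{M_c}\ge\norm c_\infty$. Combining the two bounds gives $\norm{T(c)}=\norm{c}_\infty<\infty$.
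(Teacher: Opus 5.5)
Your proof is correct: the upper bound from $\norm{P}\le 1$ and $\abs{c}\le\norm{c}_\infty$, the lower bound obtained by translating trigonometric polynomials by $e^{\ii N u}$ into $H^2$ to recover $\langle cf,g\rangle$ (hence $\norm{M_c}\le\norm{T(c)}$), and the essential-supremum localization giving $\norm{M_c}=\norm{c}_\infty$ all go through. The paper gives no proof of its own and only cites B\"ottcher--Silbermann \S 2.8; your argument is the standard proof of this classical result written out in full, and the operator identity $S^{*N}T(c)S^N=T(c)$ is not needed beyond the inner-product identity you actually use.
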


The following theorems further characterize the invertibility of $T(c)$ for a continuous $c$.
First, we define an auxiliary notion of the index of a continuous function.
\begin{definition}[\S 2.41 \cite{Bottcher2006-nn}]
    Given a continuous function $c:\mathbb T \rightarrow \mathbb C$, the index of $c$, denoted $\mathrm{ind}(c)$, 
    is the winding number of the contour $c(u)$ around $0$ in the complex plane for $u$ running from $-\pi$ 
    to $+\pi$.
\end{definition}

\noindent The index of a function is a useful quantity that characterizes the invertibility of Toeplitz operators.

\begin{theorem}[\S 2.42 \cite{Bottcher2006-nn}]\label{thrm:ToeplitzInvertable}
    For a continuous function $c: \mathbb T \rightarrow \mathbb C$, $T(c)$ is invertible if and only if 
    $c(u) \neq 0$ for all $u \in \mathbb T$ and $\mathrm{ind}(c)$ = 0.
\end{theorem}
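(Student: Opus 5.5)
The plan is to prove \cref{thrm:ToeplitzInvertable} through Fredholm theory for Toeplitz operators on $H^2$. Two classical facts carry the argument: the Hankel-operator identity for products of Toeplitz operators, and Coburn's lemma. Write $Q = I-P$ and let $H(c)$ denote the Hankel operator. The basic identity is
\begin{equation}
T(ab) - T(a)T(b) = P a Q b P = H(a)H(\tilde b),
\end{equation}
valid for $a,b\in L^\infty(\mathbb T)$, where $\tilde b(u)=b(-u)$.

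The first step is to show that $H(a)$ is compact whenever $a$ is continuous. For a trigonometric polynomial $a$, $H(a)$ has finite rank, since only finitely many Fourier modes are moved across the cut. \Cref{lem:ToeplitzBounded}, with the analogous bound $\norm{H(a)}\le\norm{a}_{L^\infty}$, then allows uniform approximation of a continuous $a$ by Fejér means. Hence $H(a)$ is a norm limit of finite-rank operators, and so it is compact.

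Next comes sufficiency. If $c$ is continuous and nonvanishing, then $c^{-1}$ is continuous, and by the identity above
\begin{equation}
T(c)T(c^{-1}) - I \quad\text{and}\quad T(c^{-1})T(c) - I
\end{equation}
are both compact. Hence $T(c)$ is Fredholm. Its index is locally constant along norm-continuous paths of Fredholm operators. Any nonvanishing continuous $c$ is homotopic, through nonvanishing continuous functions, to $e^{\ii n u}$ with $n=\mathrm{ind}(c)$: write $c = e^{\ii n u} e^{b}$ with $b$ continuous, which is possible because $c e^{-\ii n u}$ has winding number zero, and then deform $b\to 0$. The operator $T(e^{\ii n u})$ is a power of the unilateral shift or of its adjoint, so its Fredholm index is $-n$. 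Therefore $\mathrm{Ind}\,T(c) = -\mathrm{ind}(c)$. When $\mathrm{ind}(c)=0$ the operator is Fredholm of index zero. Coburn's lemma then finishes the argument: for $c\not\equiv 0$, either $\ker T(c)=0$ or $\ker T(c)^* = \ker T(\bar c)=0$. This is proved by showing that $f\in\ker T(c)$ and $g\in\ker T(\bar c)$ force $\bar c\bar f g$ to have vanishing Fourier coefficients of both signs. By the F.\ and M.\ Riesz theorem this yields $f=0$ or $g=0$. Index zero together with one trivial kernel gives bijectivity, and the bounded inverse theorem gives boundedness of the inverse.

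Finally, necessity. If $c$ is nonvanishing but $\mathrm{ind}(c)\neq0$, then $T(c)$ has nonzero Fredholm index and cannot be invertible. If instead $c(u_0)=0$ at some point, I will show that $T(c)$ is not bounded below, so it cannot be invertible. I will take $f_N\in H^2$ to be normalized analytic ``bump'' functions concentrating at $u_0$. A concrete choice is $f_N(u) = \sqrt{1-r_N^2}\,/(1-r_N e^{\ii(u-u_0)})$ with $r_N\to1$, which are normalized reproducing kernels. For these,
\begin{equation}
\norm{T(c)f_N}\le \norm{c f_N}_{L^2} \to 0,
\end{equation}
because $|f_N|^2$ tends weakly to a point mass at $u_0$ and $c$ is continuous with $c(u_0)=0$.

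I expect the main obstacle to be the index computation in the sufficiency step. It requires justifying the homotopy invariance of the Fredholm index together with the continuity of $c\mapsto T(c)$ in operator norm, which \cref{lem:ToeplitzBounded} supplies. The compactness and necessity steps are comparatively routine.
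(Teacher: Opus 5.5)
Your proof is correct. The paper states this result without proof, quoting B\"ottcher--Silbermann, \S 2.42, and your argument is essentially the standard one from that source: compactness of $PcQ$ for continuous $c$ gives Fredholmness, homotopy invariance gives $\mathrm{Ind}\,T(c) = -\mathrm{ind}(c)$, and Coburn's lemma upgrades index zero to invertibility. For a zero of $c$, your normalized reproducing kernels have squared moduli exactly equal to the Poisson kernel centered at $u_0$, which gives a clean, self-contained proof that $T(c)$ is not bounded below.
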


\noindent We now apply these to characterize the solutions of \cref{appGplus,appGminus}.
\begin{theorem}\label{thrm:SystemOfEqUniqueness}
    Let $S_{xy} \in L^2(\mathbb R)$ and $S_{yy}:\mathbb R \rightarrow \mathbb R$ be a continuous, 
    positive, symmetric function, such that $0 < \inf S_{yy} \leq \sup S_{yy} < \infty$, and 
    $S_{yy}(\omega \rightarrow \infty) = \mathrm{const}$. 
Then, there is a unique $h \in L^2(\mathbb R)$ that solves \cref{appGplus,appGminus}.
\end{theorem}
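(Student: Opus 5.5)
By \cref{thrm:ToeplitzEquivalence}, the pair \cref{appGplus,appGminus} for $h \in L^2(\mathbb R)$ is equivalent to the single equation $T(s_y)[\tilde h] = P[\tilde S_{xy}]$ for $\tilde h \in H^2$, where $s_y(u) = S_{yy}(\tan u/2)$. The $\sim$ map is a bijective isometry by \cref{lem:TildeMap}, so existence and uniqueness of $h \in L^2(\mathbb R)$ reduce to existence and uniqueness of $\tilde h \in H^2$. The right-hand side $P[\tilde S_{xy}]$ lies in $H^2$ because $\tilde S_{xy} \in L^2(\mathbb T)$ and $P$ is bounded. The theorem will therefore follow once we show that $T(s_y)$ is invertible on $H^2$; we then set $\tilde h = T(s_y)^{-1} P[\tilde S_{xy}]$ and pull back by $\sim^{-1}$.

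To get invertibility I will invoke \cref{thrm:ToeplitzInvertable}, which needs three things: $s_y$ continuous on $\mathbb T$, $s_y$ nonvanishing, and $\mathrm{ind}(s_y) = 0$.

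\emph{Continuity.} On the open interval $(-\pi,\pi)$, $s_y$ is continuous as the composition of continuous maps. The only point needing care is $u = \pm\pi$, which corresponds to $\omega \to \pm\infty$. The hypothesis $S_{yy}(\omega \to \infty) = \mathrm{const}$, applied at both ends (symmetry of $S_{yy}$ guarantees the limits at $\pm\infty$ coincide), makes the two one-sided limits of $s_y$ at $u = \pm\pi$ equal. Defining $s_y(\pm\pi)$ as this common value gives a continuous function on $\mathbb T$. I expect this to be the only delicate step, because $\mathbb T$ identifies the endpoints: without equal limits at $\pm\infty$, $s_y$ would have a jump and the continuous-symbol theory would not apply. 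I would also verify the limit is positive: it is at least $\inf S_{yy} > 0$.

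\emph{Nonvanishing.} This is immediate: $s_y(u) \ge \inf S_{yy} > 0$ for all $u$.

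\emph{Index zero.} Since $s_y$ is real and strictly positive, the curve $u \mapsto s_y(u)$ lies on the positive real half-axis and cannot wind around $0$, so $\mathrm{ind}(s_y) = 0$. One can make this explicit with the homotopy $(1-\tau)s_y + \tau$, $\tau \in [0,1]$, which stays in $(0,\infty)$ and deforms $s_y$ to the constant $1$.

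With these three properties, \cref{thrm:ToeplitzInvertable} gives that $T(s_y)$ is invertible. \Cref{lem:ToeplitzBounded} gives that it is bounded, so its inverse is bounded as well. Hence $\tilde h = T(s_y)^{-1}P[\tilde S_{xy}]$ exists in $H^2$ and is unique, and pulling back through the isometry gives the unique $h \in L^2(\mathbb R)$.

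The uniqueness part can also be checked independently of the index theorem, using the quadratic-form bound in \cref{sec:Tpositive}. For $\tilde g \in H^2$ one has $\langle \tilde g, T(s_y)\tilde g\rangle = \langle \tilde g, s_y \tilde g\rangle \ge \inf S_{yy}\,\|\tilde g\|^2$. This shows directly that $T(s_y)$ is bounded below, hence injective with closed range. Since $T(s_y)$ is self-adjoint for real $s_y$, it is also surjective, so this coercivity argument alone would already give invertibility (Lax–Milgram) and serves as a consistency check on the index argument.
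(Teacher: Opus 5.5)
Your proof is correct and follows the paper's route: you reduce the problem via \cref{thrm:ToeplitzEquivalence} to invertibility of $T(s_y)$ on $H^2$, then check continuity of $s_y$ on $\mathbb T$ (using symmetry and the limit at infinity), strict positivity, and zero index, and apply \cref{thrm:ToeplitzInvertable}. Your closing coercivity argument is also sound and is actually stronger. Self-adjointness of $T(s_y)$ together with $\langle \tilde g, T(s_y)\tilde g\rangle \ge \inf S_{yy}\,\|\tilde g\|^2$ gives invertibility using only that $s_y$ is real, essentially bounded, and bounded below by a positive constant, so continuity, symmetry, and the existence of the limit at infinity are not needed.
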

\begin{proof}
As shown by \cref{thrm:ToeplitzEquivalence}, solving \cref{appGplus,appGminus} is equivalent to inverting 
$T(s_y)$ in \cref{eq:appToeplitz}.
The invertibility of $T(s_y)$ is confirmed by verifying that $s_y$ satisfies the assumptions of \cref{lem:ToeplitzBounded,thrm:ToeplitzInvertable}.
Assumptions of  \cref{lem:ToeplitzBounded} are satisfied, because
 $S_{yy} \in L^\infty(\mathbb R)$ implies $s_y \in L^\infty(\mathbb T)$, so $T(s_y)$ is a bounded operator.
To verify the assumptions of \cref{thrm:ToeplitzInvertable}, we check the continuity, positivity, and index of $s_y$.
Function $s_y(u)$ is continuous on $\mathbb T$ with $s_y(\pm \pi) = S_{yy}(\omega \rightarrow \infty)$, because
 $S_{yy}$ is continuous on $\mathbb R$, and $S_{yy}(\omega \rightarrow \infty)$ exists.
 The symmetry of $S_{yy}$ implies that $s_y(+\pi)=s_y(-\pi)$.
Further, $\inf s_y >0$, since $\inf S_{yy}> 0$, and therefore $s_y$ is a strictly positive function.
Immediately, positivity of $s_y$ implies that $\mathrm{ind}(s_y) = 0$ and $s_y(u) \neq 0$ for all $u \in \mathbb T$.
Therefore, $s_y$ satisfies conditions of \cref{thrm:ToeplitzInvertable} and $T(s_y)$ is invertible.
Since $T(s_y)^{-1}$ is bounded and bijective on $H^2$, $\tilde h$ is unique and is a member of $H^2$. 
Finally, a unique $\tilde h \in H^2$ implies the existence of a unique $h \in L^2(\mathbb R)$ due to \cref{lem:TildeMap}.
\end{proof}

As a closing remark, it is important to note that modified data produced by \cref{TheoremBound} obeys conditions sufficient to satisfy the assumptions of \cref{thrm:SystemOfEqUniqueness}.
Therefore, there is a unique causal filter $h$ that solves \cref{Gplus,Gminus} for a given set of modified data produced by \cref{TheoremBound}.


\section{Proof of \cref{SolutionConvergence}}\label{sec:ProofTheorem4}
The proof of \cref{SolutionConvergence} is straightforward and follows from the arguments in the main text and the information provided in \cref{app:eigHilbert,sec:Tpositive}; in the following, we explicate the argument.

As was alluded in the main text, the proof of \cref{SolutionConvergence} essentially boils down to estimating 
$\norm{h - h^{(n)}}_{L^2(\mathbb R)}$.
The two types of contributions arise, denoted $\delta h_1^{(n)}(\omega)$ and $\delta h_2^{(n)}(\omega)$. 
The contribution $\delta h_1^{(n)}(\omega)$ corresponds to the truncation of the sum 
$h(\omega) = \sum_{k = 0}^{\infty} h_k \phi_k(\omega)$ at finite $n$. 
The contribution $\delta h_2^{(n)}(\omega)$ corresponds to a discrepancy between $h_k$ and $h_k^{(n)}$ for $k \leq n$.
The expressions for these errors in terms of $h_k$ and $h_k{(n)}$ are
\begin{align}\label{eq:deltaH1}
	\delta h_1^{(n)}(\omega) &= \sum_{k = n+1}^{\infty} h_k \phi_k(\omega)
	\\
	\delta h_2^{(n)}(\omega) &= \sum_{k = 0}^{n} (h_k - h_k^{(n)}) \phi_k(\omega).
\end{align}
Note that both $\delta h_1^{(n)}(\omega)$ and $\delta h_2^{(n)}(\omega)$ are necessarily $L^2(\mathbb R)$ functions, since $h, h^{(n)} \in L^2(\mathbb R)$.

It was established in \cref{thm:eigHilbert} and Supplemental Information \cref{app:eigHilbert} that the map 
$g \mapsto \tilde g$ is bijective and preserves the inner product in $L^2(\mathbb R)$ and $L^2(\mathbb T)$. 
Therefore, we choose to work with $\tilde h_1^{(n)}(u)$ and $\tilde h_2^{(n)}(u)$ (members $L^2(\mathbb T)$) for convenience.
Explicit expressions for  $\delta \tilde h_1^{(n)}(u)$ and $\delta \tilde h_2^{(n)}(u)$ are
\begin{align}\label{eq:deltah1}
	\delta \tilde h_1^{(n)}(u) &= \sum_{k = n+1}^{\infty} h_k e^{\ii k u}
	\\\label{eq:deltah2}
	\delta \tilde h_2^{(n)}(u) &= \sum_{k = 0}^{n} (h_k - h_k^{(n)}) e^{\ii k u}.
\end{align}
For the same reasons as above, $\tilde h$, $\delta \tilde h_1^{(n)}$, and $\delta \tilde h_2^{(n)}(u)$ 
are all members of $L^2(\mathbb T)$.

First, we estimate the $L^2(\mathbb T)$ norm of $\delta \tilde h_1^{(n)}$.
Parseval's identity allows to evaluate the $L^2(\mathbb T)$ norm of $\tilde h$ as
\begin{equation}\label{eq:ParsevalH}
	||\tilde h||_{L^2(\mathbb T)} = \int_{-\pi}^{\pi} \frac{\dd u}{2 \pi} \; |\tilde h(u)|^2 = \sum_{k=0}^{\infty} |h_k|^2.
\end{equation}
From the convergence of the series in the right side of \cref{eq:ParsevalH}, it immediately follows that
\begin{equation}\label{eq:Parsevaldeltah1}
	\sum_{k=n}^{\infty} |h_k|^2 \rightarrow 0, \quad n \rightarrow \infty.
\end{equation}
Consequent application of Parseval's identity to \cref{eq:Parsevaldeltah1} results in 
\begin{equation}
	||\delta \tilde h_1^{(n)}||_{L^2(\mathbb T)} = \int_{-\pi}^\pi \frac{\d u}{2 \pi} \, |\delta \tilde h_1^{(n)}(u)|^2 \rightarrow 0, \quad n \rightarrow \infty.
\end{equation}
This result can be restated as
\begin{equation}
	\delta h_1^{(n)}(\omega) \rightarrow 0, \quad n \rightarrow \infty,
\end{equation}
where convergence is understood in the sense of $L^2(\mathbb R)$ norm, since the $\sim$ map preserves the norm.

We proceed to constrain the norm of $\delta h_2^{(n)}(\omega)$.
\cref{eq:linearEq} of the main text can be rewritten as
\begin{equation}\label{eq:ExactSystem}
	\sum_{k = 0}^{n} t_{l-k} h_k  = S_{xy, l} - \delta S_l^{(n)}, \quad \delta S_l^{(n)} =  \sum_{k = n+1}^{\infty} t_{l - k} h_k
\end{equation}
where $0\leq l \leq n$. 
The quantity $\delta S_l^{(n)}$ characterizes the error induced in $S_{xy, l}$ by the finite size truncation of the system of equations.
We proceed first to estimate the magnitude of $\delta S_l^{(n)}$ and then estimate its impact on the norm of $h_k^{(n)}$, which is exactly the effect of $\delta h_2^{(n)}$.
To characterize $\delta S_l^{(n)}$, we put the estimate on $\sum_{k = n+1}^\infty t_{l-k}h_k$, which describes the coordinates of $\vec T$ acting on the corresponding vector of $\delta h_2^{(n)}(\omega)$.
In Supplementary Information \cref{sec:Tpositive} we argued that $\vec T$, as an infinite matrix, has a maximal eigenvalue bounded by $\sup [S_{yy}]$.
This immediately allows to estimate the norm of $\delta S_l^{(n)}$ as
\begin{multline}
	\sum_{l = 0}^{n} \left| \delta S_l^{(n)} \right|^2 \leq \sum_{l = 0}^{\infty} \left|\delta S_l ^{(n)} \right|^2 \leq 
	\\
	\leq (\sup[S_{yy}])^2 \sum_{k = n+1}^{\infty} |h_k|^2 \rightarrow 0, \quad n \rightarrow \infty.
\end{multline}
Therefore, the second term in the RHS of \cref{eq:ExactSystem} vanishes by norm with the growth of $n$.

With this estimation on the corrections to $S_{xy, l}$ in hand, we estimate $h_k - h_k^{(n)}$ by inverting $\vec T^{(n)}$ in \cref{eq:ExactSystem}.
It was demonstrated in \cref{sec:Tpositive} that $\vec T^{(n)}$ has its minimal eigenvalue bounded from below by $\inf [S_{yy}]$.
The following estimation follows:
\begin{equation}\label{eq:estdeltah2app}
	\sum_{k = 0}^{n} |h_k - h_k^{(n)}|^2 \leq (\inf [S_{yy}])^{-2} \sum_{l = 0}^{\infty} |\delta S_l^{(n)}|^2 \rightarrow 0,
\end{equation}
as $n \rightarrow \infty$. 
Application of Parseval's identity to \cref{eq:deltah2} in combination with \cref{eq:estdeltah2app} leads to
\begin{equation}
	||\delta \tilde h_2^{(n)}||_{L^2(\mathbb T)} = \int_{-\pi}^{\pi} \d u \, |\delta \tilde h_2^{(n)}(u)|^2 \rightarrow 0, \quad n \rightarrow \infty,
\end{equation}
and therefore 
\begin{equation}
	\delta h_2^{(n)}(\omega) \rightarrow 0, \quad n \rightarrow \infty
\end{equation}
in the sense of norm, since $\sim$ map preserves the inner product.  

Since we have shown that both $\delta h_1^{(n)}$ and $\delta h_2^{(n)}$ vanish in norm, 
\begin{equation}\label{eq:WeakPointwiseConv}
	h^{(n)}(\omega) \rightarrow h(\omega), \quad n \rightarrow \infty
\end{equation}
in norm.
As an immediate consequence of \cref{eq:WeakPointwiseConv}:
\begin{equation}
	h_k^{(n)} \rightarrow h_k, \quad n \rightarrow \infty
\end{equation}
for each fixed $k$.
This concludes the proof of \cref{SolutionConvergence}

\section{Proof of \cref{trm:WienerAndSmoothWiener}}
\label{sec:WienerAndSmoothWiener}

In \cref{sec:HpSpace} we translated the problem of solving \cref{Gplus,Gminus} into the language of Toeplitz operator theory \cite{Bottcher2006-nn}.
We consequently showed that the optimal causal filter $h(\omega)$ is unique for the data produced by \cref{TheoremBound}.
We finally demonstrated that
\cref{SolutionConvergence} certifies that the numerical approximation $h^{(n)}(\omega)$ converges to $h(\omega)$ in norm.
The intent of \cref{trm:WienerAndSmoothWiener} is a strengthening of the conclusion of 
\cref{SolutionConvergence} for additional constraints on $(S_{xy}, S_{yy})$.

To construct a rigorous proof of \cref{trm:WienerAndSmoothWiener}, we capitalize on the developments in  Supplementary Information \cref{sec:HpSpace,sec:ProofTheorem4}.
\cref{trm:WienerAndSmoothWiener} results from the following pair of theorems.

\begin{theorem}\label{thrm:WienerPartTheorem}
	Let $S_{xy} \in L^2(\mathbb R)$ and $S_{yy}:\mathbb R \rightarrow \mathbb R$ be a positive, symmetric, continuous function, such that $0 < \inf S_{yy} \leq \sup S_{yy} < \infty$.
In addition, suppose that $\sum_{n = 0}^{\infty} |t_n|$ and $\sum_{n = 0}^{\infty} |S_{xy, n}|$ are both finite, 
where $2\pi t_k = \int_{- \pi}^{\pi} \d{u} e^{i k u} S_{yy}\left(\tan u/2\right)$ and $S_{xy, n} = \langle \phi_n, S_{xy}\rangle$.
Then,
(i) the expansion $h(\omega) = \sum_{k\geq 0}h_k \phi_k (\omega)$ converges absolutely and uniformly; 
(ii) the truncated filter $h^{(n)}(\omega)$ converges to $h(\omega)$ everywhere uniformly; 
and, (iii) the coefficients $h_k^{(n)}$ converge to $h_k$ uniformly in $k$ as $n \rightarrow \infty$.
\end{theorem}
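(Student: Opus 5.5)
The plan is to run the whole argument in the Wiener algebra $W$ of absolutely summable Fourier series on $\mathbb T$, with the sequence space $\ell^1$ taking over the role that $\ell^2$ (equivalently $L^2(\mathbb T)$) plays in \cref{SolutionConvergence}.

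\textbf{Step 1: the data lie in $W$.} Since $S_{yy}$ is real and symmetric, $t_{-k}=t_k$. Hence $\sum_n|t_n|<\infty$ means $s_y(u)=S_{yy}(\tan u/2)$ belongs to $W$. Likewise $\sum_n|S_{xy,n}|<\infty$ means the right-hand side $\vec s$ of \cref{eq:linearEq} lies in $\ell^1$.

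\textbf{Step 2: $\vec h\in\ell^1$, hence claim (i).} By \cref{thrm:SystemOfEqUniqueness}, $s_y$ is continuous, strictly positive and has index $0$. By the Wiener--L\'evy theorem, $\log s_y\in W$. Splitting $\log s_y$ into its nonnegative- and negative-frequency parts gives a Wiener--Hopf factorization $s_y=a_-a_+$ with $a_\pm^{\pm1}\in W_\pm$; this is used only as a proof device and not in the algorithm. Then $T(s_y)^{-1}=T(a_+^{-1})T(a_-^{-1})$, and both factors are bounded on $\ell^1$ with norms at most the $W$-norms of their symbols. Therefore the unique solution $\vec h=T(s_y)^{-1}\vec s$ of \cref{eq:appToeplitz} lies in $\ell^1$.

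Next use $|\phi_k(\omega)|=\pi^{-1/2}|1-\ii\omega|^{-1}\le\pi^{-1/2}$ for all real $\omega$. By the Weierstrass M-test, $\sum_{k\ge0}h_k\phi_k$ converges absolutely and uniformly on $\mathbb R$. Its sum agrees almost everywhere with the $L^2$ limit from \cref{thm:eigHilbert}, which gives (i).

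\textbf{Step 3: $\ell^1$-stability of the finite sections (main obstacle).} The bound $\|(\vec T^{(n)})^{-1}\|_{\ell^2}\le(\inf S_{yy})^{-1}$ from \cref{sec:Tpositive} does not transfer to $\ell^1$. What is needed is
$$\sup_n\|(\vec T^{(n)})^{-1}\|_{\ell^1\to\ell^1}<\infty .$$
I would get this from the Gohberg--Feldman (Baxter) theorem on the finite section method for Wiener symbols. That theorem says the finite sections of $T(a)$, $a\in W$, are stable in $\ell^p$ if and only if both $T(a)$ and $T(\tilde a)$ are invertible, where $\tilde a(u)=a(-u)$. Here $\tilde s_y$ is again positive with index $0$, so both operators are invertible.

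If I wanted a self-contained argument, I would first try to write $(\vec T^{(n)})^{-1}$ as $P_nT(a_+^{-1})T(a_-^{-1})P_n$ plus a correction built from the reversed sections $W_nT(\cdot)W_n$. I would then show the correction is small in $\ell^1$ using the decay of the Fourier tails of $a_\pm^{-1}$. This is the step I expect to require the most care.

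\textbf{Step 4: convergence, hence claims (ii) and (iii).} Write $P_n$ for the projection onto the first $n+1$ coefficients and $Q_n=I-P_n$. As in \cref{eq:ExactSystem}, $\vec T^{(n)}(P_n\vec h-\vec h^{(n)})=-P_n\vec T Q_n\vec h$. Combining this with Step 3 gives
$$\|P_n\vec h-\vec h^{(n)}\|_{\ell^1}\le C\,\|s_y\|_W\,\|Q_n\vec h\|_{\ell^1}\to0 .$$
It follows that $\|\vec h-\vec h^{(n)}\|_{\ell^1}\le\|Q_n\vec h\|_{\ell^1}+\|P_n\vec h-\vec h^{(n)}\|_{\ell^1}\to0$.

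This single estimate yields both remaining claims:
\begin{itemize}
\item (iii) holds because $\sup_k|h_k-h_k^{(n)}|\le\|\vec h-\vec h^{(n)}\|_{\ell^1}$.
\item (ii) holds because $\sup_\omega|h(\omega)-h^{(n)}(\omega)|\le\pi^{-1/2}\|\vec h-\vec h^{(n)}\|_{\ell^1}$.
\end{itemize}
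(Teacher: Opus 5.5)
Your proof is correct, and its architecture matches the paper's. You obtain $\vec h\in\ell^1$ from the Wiener--Hopf factorization of $s_y$ inside $\mathbb W$; the paper packages the same step as inverse-closedness of $\mathbb W$ plus \cref{lem:InversionLemma}. You then split $\vec h-\vec h^{(n)}$ into the tail $Q_n\vec h$ and the finite-section error $P_n\vec h-\vec h^{(n)}$, both estimated in $\ell^1$, as the paper does.

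The real difference is your Step 3, and there your route is the one that works. The paper bounds the finite-section error in \cref{eq:WH2normEst} by $(\inf S_{yy})^{-1}\sum_l|\delta S_l^{(n)}|$, citing the smallest-eigenvalue bound of \cref{sec:Tpositive}. That bound only gives $\|(\vec T^{(n)})^{-1}\|_{\ell^2\to\ell^2}\le(\inf S_{yy})^{-1}$. For Hermitian matrices the $\ell^2$ norm is the \emph{smaller} one, since $\|A\|_{\ell^2\to\ell^2}\le\|A\|_{\ell^1\to\ell^1}$.

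The paper's constant is in fact false in general. Take the admissible symbol $s_y=1/(3+\cos u-\cos 2u)$, which is even, positive and in $\mathbb W$. Here $T(s_y)^{-1}-T(1/s_y)$ is a product of Hankel operators with summable symbols, so the far columns of $T(s_y)^{-1}$ approach those of $T(1/s_y)$ in $\ell^1$. Hence $\liminf_n\|(\vec T^{(n)})^{-1}\|_{\ell^1\to\ell^1}\ge\|1/s_y\|_{\mathbb W}=5>33/8=1/\inf s_y$.

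A uniform $\ell^1$ bound, with a constant governed by the factors $a_\pm^{-1}$ rather than by $\inf s_y$ alone, is exactly Baxter's theorem (the Wiener/$\ell^1$ case of Gohberg--Feldman) that you invoke. It gives the bound for $n\ge n_0$, and positive-definiteness of $\vec T^{(n)}$ covers the finitely many smaller $n$. The paper's version would give an explicit constant but does not justify it. Yours gives a correct proof at the price of citing a deeper finite-section result. The self-contained Baxter/Widom-type argument you sketch is unnecessary once that theorem is cited.

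A minor point: in Step 2 you cite \cref{thrm:SystemOfEqUniqueness} for continuity of $s_y$, but that theorem assumes $S_{yy}(\omega\to\infty)$ exists, which is not a hypothesis here. This is harmless. $s_y\in\mathbb W$ already makes $s_y$ continuous on $\mathbb T$, so $s_y\ge\inf S_{yy}>0$ everywhere and $\mathrm{ind}(s_y)=0$.
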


\begin{theorem}\label{thrm:SmoothWienerPartTheorem}
Let $S_{xy} \in L^2(\mathbb R)$ and $S_{yy}:\mathbb R \rightarrow \mathbb R$ be a positive symmetric continuous function, such that $0 < \inf S_{yy} \leq \sup S_{yy} < \infty$.
Suppose there exists such $\omega_0 > 0$, such that the data are of a H\"{o}lder class $C^{m, \alpha}$ for integer $m \geq 1 $ and $0<\alpha <1$ on $[-\omega_0, \omega_0]$. Additionally suppose that $S_{yy}(1/\omega)$ and $(1 + \ii/\omega)S_{yy}(1/\omega))$ are of a H\"{o}lder class $C^{m, \alpha}$ on $[-\omega_0^{-1}, \omega_0^{-1}]$.
Then, (i) $h(\omega)$ is also of H\"{o}lder class $C^{m, \alpha}$ on $[- \omega_0, \omega_0]$ and $h(1/\omega)$ is of a H\"older class $C^{m, \alpha}$ on $[-\omega_0^{-1}, \omega_0^{-1}]$; 
(ii) $h_k = O(1/k^{m+\alpha})$;
(iii) the convergence of the truncated filter is controlled as 
$|h^{(n)}(\omega) - h(\omega)| < A n^{1-m-\alpha} $ for some $A >0$; and,
(iv) the convergence of the expansion coefficients is uniform in $k$ and controlled by 
$|h^{(n)}_k - h_k| < B n^{1-m-\alpha} $ for some $B>0$. 
\end{theorem}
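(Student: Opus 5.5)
The plan is to transport the whole problem to the unit circle with the isometry $g\mapsto\tilde g$ of \cref{lem:TildeMap}, and to prove every claim there. On $\mathbb T$ the filter is $\tilde h=T(s_y)^{-1}P[\tilde S_{xy}]$, by \cref{thrm:ToeplitzEquivalence,thrm:SystemOfEqUniqueness}. The argument then reduces to three facts: Hölder regularity of Toeplitz inverses, decay of Fourier coefficients of Hölder functions, and stability of finite sections.

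\textbf{Step 1: translating the hypotheses.} I would use the identity $\sqrt\pi\,e^{\ii u/2}\sec(u/2)=\sqrt\pi\,(1+\ii\tan(u/2))$. This gives $\tilde g(u)=\sqrt\pi\,(1+\ii\omega)g(\omega)$ with $\omega=\tan(u/2)$.
\begin{itemize}
\item Near $u=0$, the map $u\mapsto\tan(u/2)$ is a real-analytic diffeomorphism onto $[-\omega_0,\omega_0]$. So $C^{m,\alpha}$ regularity of the data there transfers directly to $s_y$ and $\tilde S_{xy}$.
\item Near $u=\pm\pi$, I write $\omega=1/\omega'$, where $\omega'=\cot(u/2)$ is again analytic in $u$. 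Then $(1+\ii\omega)S_{xy}(\omega)=\ii\,\omega'^{-1}(1-\ii\omega')\,S_{xy}(1/\omega')$, up to the analytic factor. This is exactly where the hypothesis on $(1+\ii/\omega)S(1/\omega)$ enters: it makes $\tilde S_{xy}$ of class $C^{m,\alpha}$ across $u=\pm\pi$.
\item The hypothesis on $S_{yy}(1/\omega)$ makes $s_y$ of class $C^{m,\alpha}$ across $u=\pm\pi$.
\end{itemize}
Gluing the two charts, $s_y,\tilde S_{xy}\in C^{m,\alpha}(\mathbb T)$, and $s_y$ is positive.

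\textbf{Step 2: regularity of $\tilde h$, giving (i) and (ii).} Since $s_y>0$ is of class $C^{m,\alpha}$, so is $\log s_y$.
\begin{itemize}
\item By the Privalov theorem, the Riesz projector $P$ preserves $C^{m,\alpha}(\mathbb T)$ for $0<\alpha<1$. This yields the Wiener--Hopf factorization $s_y=a_+\overline{a_+}$ with $a_+=\exp(P\log s_y-\tfrac12 (\log s_y)_0)$, where $a_+^{\pm1}\in H^\infty\cap C^{m,\alpha}$.
\item Then $T(s_y)^{-1}=T(a_+^{-1})\,P\,T(\overline{a_+}^{-1})$, so $\tilde h=a_+^{-1}P[\overline{a_+}^{-1}P\tilde S_{xy}]$.
\item Each operation in this formula preserves $C^{m,\alpha}$, so $\tilde h\in C^{m,\alpha}(\mathbb T)$. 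Pulling back through the two charts of Step 1 gives (i); the smooth factor $(1+\ii\omega)^{-1}$ is harmless.
\item The standard estimate for Fourier coefficients of $C^{m,\alpha}$ functions (integrate by parts $m$ times, then use the Hölder modulus) gives $h_k=O(k^{-m-\alpha})$, which is (ii).
\end{itemize}

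\textbf{Step 3: finite sections, giving (iv) and then (iii).} As in \cref{sec:ProofTheorem4}, I would write $\vec T^{(n)}(\vec h^{(n)}-P_n\vec h)=P_n\,\delta S^{(n)}$, where $\delta S^{(n)}_l=\sum_{k>n}t_{l-k}h_k$.
\begin{itemize}
\item The symbol $s_y$ is positive and belongs to the Wiener algebra, since $C^{m,\alpha}$ with $m\ge1$ has absolutely summable coefficients.
\item By Baxter's theorem (equivalently, the Gohberg--Feldman finite-section theory in \cite{Bottcher2006-nn}), the inverses $(\vec T^{(n)})^{-1}$ are uniformly bounded on $\ell^1$ and on $\ell^\infty$.
\item Hence $\sup_k|h_k^{(n)}-h_k|$ and $\sum_k|h_k^{(n)}-h_k|$ are both $\le C\sum_l|\delta S^{(n)}_l|\le C\,\|t\|_{\ell^1}\sum_{k>n}|h_k|$.
\item By Step 2, $\sum_{k>n}|h_k|=O(n^{1-m-\alpha})$. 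This gives (iv).
\item For (iii), I use $|\phi_k(\omega)|\le\pi^{-1/2}$, so that $|h^{(n)}(\omega)-h(\omega)|\le\pi^{-1/2}\big(\sum_{k\le n}|h_k^{(n)}-h_k|+\sum_{k>n}|h_k|\big)=O(n^{1-m-\alpha})$.
\end{itemize}

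\textbf{Main obstacle.} The hardest part will be Step 3: the uniform $\ell^1$ stability of the finite sections. The $\ell^2$ bound from \cref{sec:Tpositive} is not enough, since it only gives an error of order $n^{1/2-m-\alpha}$ in $\ell^2$, and converting that to the $\ell^1$ bound needed for (iii) costs a factor $\sqrt n$. I would first try to invoke Baxter's theorem directly. Failing that, I would use the explicit finite-section inverse formula of Widom type, $(\vec T^{(n)})^{-1}\approx T(a_+^{-1})T(\overline{a_+}^{-1})+$(flipped Hankel corrections), and bound the corrections by the $\ell^1$ norms of the coefficients of $a_+^{\pm1}$.

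A secondary delicate point is the gluing at $u=\pm\pi$ in Step 1. There one must check that the prefactor $1+\ii\omega$ is exactly cancelled to produce a $C^{m,\alpha}$ function in the $\omega'=1/\omega$ chart.
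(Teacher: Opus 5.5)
Your proposal is correct. Steps 1--2 are essentially the paper's argument. The chart transfer is \cref{lem:HolderLineToCircle}. Your explicit factorization $a_+=\exp(P\log s_y-\tfrac12(\log s_y)_0)$, combined with Privalov's theorem, is exactly what the paper packages as inverse-closedness of $C^{m,\alpha}(\mathbb T)$ (\cref{thrm:HolderInverseClosed,lem:InversionLemma}). Writing $a_+^{-1}=\exp(-P\log s_y+\tfrac12(\log s_y)_0)$ directly is cleaner than the paper's detour through Wiener's lemma. You also account for the factor $1+\ii\omega$ when pulling $\tilde h$ back to $h$, which the paper leaves implicit.

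The genuine difference is Step 3. The paper bounds $\sum_k|h_k-h_k^{(n)}|$ by $(\inf S_{yy})^{-1}\sum_l|\delta S_l^{(n)}|$, using only the lower eigenvalue bound on $\vec T^{(n)}$ from \cref{sec:Tpositive}. That bound controls $(\vec T^{(n)})^{-1}$ on $\ell^2$, not on $\ell^1$, so the paper's step is unjustified as written, and you were right to distrust it. Your repair via Baxter's theorem is valid: it gives uniform $\ell^1$ stability of finite sections for a positive Wiener-algebra symbol, and hence $\ell^\infty$ stability, since $\vec T^{(n)}$ is symmetric.

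However, your claim that the $\ell^2$ route is insufficient is an arithmetic slip. With $|h_k|=O(k^{-m-\alpha})$ from Step 2:
\begin{itemize}
\item $\|\delta S^{(n)}\|_{\ell^2}\le\sup S_{yy}\,(\sum_{k>n}|h_k|^2)^{1/2}=O(n^{1/2-m-\alpha})$, hence $\|\vec h^{(n)}-P_n\vec h\|_{\ell^2}=O(n^{1/2-m-\alpha})$.
\item This already gives $\sup_k|h_k^{(n)}-h_k|=O(n^{1/2-m-\alpha})$, which is better than (iv).
\item Cauchy--Schwarz over the $n+1$ coordinates gives $\sum_{k\le n}|h_k^{(n)}-h_k|=O(\sqrt n\, n^{1/2-m-\alpha})=O(n^{1-m-\alpha})$, exactly the rate in (iii).
\end{itemize}
So for this theorem, the elementary eigenvalue bound already in the paper suffices. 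What Baxter's theorem buys is rate-free $\ell^1$ stability. That is what is genuinely needed to fix the same gap in the Wiener-algebra case (\cref{thrm:WienerPartTheorem}), where no decay rate for $h_k$ is available and the Cauchy--Schwarz trick fails.

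One caveat is shared with the paper. Gluing the two closed charts at $u=\pm u_0$ requires the one-sided derivatives up to order $m$ to agree there. For example, $S_{yy}=2+\min(\omega^2,\omega_0^2)$ satisfies both chart hypotheses, yet $s_y\notin C^1(\mathbb T)$. The hypotheses should therefore be imposed on overlapping neighbourhoods.
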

During the proof, it will become evident that the assumptions of \cref{thrm:SmoothWienerPartTheorem} are strictly stronger than those of \cref{thrm:WienerPartTheorem}.
Thus, proving these two theorems is equivalent to proving \cref{trm:WienerAndSmoothWiener}, so we will examine them separately.

The proofs of \cref{thrm:WienerPartTheorem,thrm:SmoothWienerPartTheorem} are structured similarly and involve three steps:
\begin{enumerate}[nosep]
\item[I.] Establish an equivalence between the functions in $L^2(\mathbb R)$ that satisfy the assumptions of the theorem and a certain ``smooth class'' of functions on $\mathbb T$ under the map $\sim$.
\item[II.] Show that the Toeplitz operator equation in \cref{eq:appToeplitz} is invertible inside the smooth class.
\item[III.] Establish the convergence properties of the truncated filter to the exact solution of the Toeplitz operator equation.
\end{enumerate}
To execute the proof, a precise notion of ``smooth class" is necessary, which calls for some more
preparatory work.

\subsection{Toeplitz operators on Banach algebras}

We begin by listing some aspects of the modern theory of Toeplitz operators \cite{Bottcher2006-nn}. 
A perfect place to start is the notion of a Banach algebra, which recognizes the elementary notion 
of multiplication of functions and formalizes it in the context of function spaces (which are otherwise
linear vector spaces with only an addition defined on functions).

\begin{definition}\label{def:Banach}
	A Banach algebra $\mathbb A$ is a Cauchy-complete Banach space that is equipped with an additional operation called element multiplication $(a, b) \rightarrow ab \in \mathbb A$ for all $a, b \in \mathbb A$.
The multiplication map respects the norm $||\bullet ||_{\mathbb A}$ of the Banach space, requiring $||ab||_{\mathbb A} \leq ||a||_{\mathbb A} ||b||_{\mathbb A}$ for all $a, b \in \mathbb A$. 
For the purpose of this work, $\mathbb A$ is always equipped with an identity element $e \in \mathbb A$, such that $||e||_{\mathbb A} = 1$ and $e a = a e = a$ for all $a \in \mathbb A$.
\end{definition}

A simple consequence of any Banach algebra being closed under addition and multiplication is:
\begin{lemma}
	For any Banach Algebra $\mathbb A$, the exponential map $\exp[a] \eqdef \sum_{n = 0}^{\infty} a^n/n!$ is also a member of $\mathbb A$ for all $a \in \mathbb A$, and 
    $||\exp[a]||_A \leq e^{||a||_{\mathbb A}}$.
\end{lemma}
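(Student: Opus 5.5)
The plan is to verify two things: that the series $\exp[a]=\sum_{n\ge0}a^n/n!$ converges in $\mathbb A$, and that the norm bound holds. Everything rests on completeness of $\mathbb A$ together with submultiplicativity of the norm (\cref{def:Banach}).

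First, I will bound powers. By induction on $n$, using $\norm{ab}_{\mathbb A}\le\norm{a}_{\mathbb A}\norm{b}_{\mathbb A}$, one gets $\norm{a^n}_{\mathbb A}\le\norm{a}_{\mathbb A}^n$ for $n\ge1$. For $n=0$ take $a^0\eqdef e$, so that $\norm{a^0}_{\mathbb A}=1=\norm{a}_{\mathbb A}^0$ by the normalization of the identity.

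Next, I will show the series converges absolutely. The sum $\sum_n\norm{a^n/n!}_{\mathbb A}$ is dominated by the scalar series $\sum_n\norm{a}_{\mathbb A}^n/n!=e^{\norm{a}_{\mathbb A}}$, which is finite. Hence the partial sums $E_N=\sum_{n=0}^N a^n/n!$ form a Cauchy sequence: for $M>N$,
\begin{equation*}
\norm{E_M-E_N}_{\mathbb A}\le\sum_{n=N+1}^{M}\frac{\norm{a}_{\mathbb A}^n}{n!},
\end{equation*}
and the right side is a tail of a convergent series. Since $\mathbb A$ is complete, $E_N$ converges to a limit, which is by definition $\exp[a]\in\mathbb A$. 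Closure of $\mathbb A$ under addition and multiplication ensures that each $E_N$ lies in $\mathbb A$.

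Finally, I will obtain the norm bound. The triangle inequality gives $\norm{E_N}_{\mathbb A}\le\sum_{n=0}^N\norm{a}_{\mathbb A}^n/n!\le e^{\norm{a}_{\mathbb A}}$ for every $N$. Because the norm is continuous, passing to the limit yields $\norm{\exp[a]}_{\mathbb A}\le e^{\norm{a}_{\mathbb A}}$.

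There is no real obstacle here. The only point needing care is the $n=0$ term, which is where the assumption $\norm{e}_{\mathbb A}=1$ is used. If the algebra lacked a unit, one would instead work in its unitization, but the paper's \cref{def:Banach} includes a unit, so this case does not arise.
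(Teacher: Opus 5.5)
Your proof is correct and is the standard argument the paper has in mind; the paper gives no proof beyond the one-line remark that the lemma is a consequence of closure under addition and multiplication. Your write-up makes explicit what that remark leaves out. Closure alone only places the partial sums in $\mathbb A$. The real ingredients are completeness of $\mathbb A$ and submultiplicativity of the norm, which give $\|a^n\|_{\mathbb A}\le\|a\|_{\mathbb A}^n$ and hence absolute convergence. The normalization $\|e\|_{\mathbb A}=1$ handles the $n=0$ term.
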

It is often convenient to talk about a Banach algebra $\mathbb B$ being continuously embedded into a 
Banach algebra $\mathbb A$:
\begin{definition}
	The Banach algebra $\mathbb B$ is said to be continuously embedded into a Banach algebra $\mathbb A$, 
    if $\mathbb B \subset \mathbb A$ and $||b||_{\mathbb A} \leq ||b ||_{\mathbb B}$ for all $b \in \mathbb B$.
\end{definition}

Relevant examples of Banach algebras are spaces $L^p(\mathbb T)$ for $1 \leq p \leq \infty$, with their corresponding norms $||\bullet ||_{L^p(\mathbb T)}$ \cite[\S 1.36]{Bottcher2006-nn}, and the
Hardy space
\begin{equation}
    H^\infty = \{ g \in L^\infty(\mathbb T): P[g] = g \}.
\end{equation}
Toeplitz operators of the form $T(a)$ can be associated with any $a$ in a Banach algebra; the advantage of 
working in this setting is that a factorization of $a$ makes sense, and moreover, the Toeplitz operator
associated with a product of elements can be related to products of their Toeplitz operators:

\begin{theorem}[\S 2.14 \cite{Bottcher2006-nn}]\label{thrm:Factorization}
	For all $a_+,b_+ \in H^\infty$ and all $c \in L^\infty(\mathbb T)$, it is true 
    that $T(a_- c b_+) = T(a_-) T(c)T(b_+)$, where $a_- \eqdef a_+^*$. 
\end{theorem}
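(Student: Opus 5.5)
The plan is to verify the identity by applying both sides to an arbitrary $f \in H^2$. The whole argument rests on one elementary fact about Fourier supports of products: multiplying by an $H^\infty$ function cannot create negative frequencies, and multiplying by its conjugate cannot create non-negative frequencies out of strictly negative ones.

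First, note that $a_- c b_+ \in L^\infty(\mathbb T)$, since $\|a_- c b_+\|_\infty \le \|a_+\|_\infty \|c\|_\infty \|b_+\|_\infty$. Hence $T(a_- c b_+)$ is a bounded Toeplitz operator by \cref{lem:ToeplitzBounded}.

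Next I record the coefficient formula. For $u \in L^\infty(\mathbb T)$ and $v \in L^2(\mathbb T)$, the product $uv$ lies in $L^2(\mathbb T)$, and its Fourier coefficients are
\begin{equation*}
(uv)_n = \int_{-\pi}^{\pi}\frac{\d u'}{2\pi}\, u\, v\, e^{-\ii n u'} = \sum_{k\in\mathbb Z} u_k v_{n-k}.
\end{equation*}
This follows from Parseval's identity applied to the two $L^2$ functions $\bar u e^{\ii n u'}$ and $v$, and the series converges absolutely by Cauchy--Schwarz. Two support statements follow immediately:
\begin{itemize}
\item If $u_k = 0$ for $k<0$ and $v_k = 0$ for $k<0$, then $(uv)_n = 0$ for $n<0$.
\item If $u_k = 0$ for $k>0$ and $v_k = 0$ for $k \ge 0$, then $(uv)_n = 0$ for $n \ge 0$.
\end{itemize}

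Right factor. Take $f \in H^2$. Since $b_+ \in H^\infty$, the first support statement gives $b_+ f \in H^2$, so $T(b_+) f = P[b_+ f] = b_+ f$. Therefore
\begin{equation*}
T(c)\,T(b_+) f = P[c\, b_+ f].
\end{equation*}

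Left factor. The coefficients of $a_- = a_+^\cc$ satisfy $(a_-)_k = \overline{(a_+)_{-k}}$, so they vanish for $k>0$. Set $g = c\, b_+ f \in L^2(\mathbb T)$ and split $g = P[g] + (1-P)[g]$. The function $(1-P)[g]$ has only strictly negative frequencies, so the second support statement gives $P\bigl[a_- (1-P)[g]\bigr] = 0$. Consequently
\begin{equation*}
T(a_-)\,P[g] = P\bigl[a_- P[g]\bigr] = P[a_- g] = P[a_- c\, b_+ f] = T(a_- c b_+) f.
\end{equation*}

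Combining the two factors yields $T(a_-)T(c)T(b_+) f = T(a_- c b_+) f$ for every $f \in H^2$, which is the claim.

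The only step requiring care, and hence the main obstacle, is justifying the coefficient convolution formula for a product of an $L^\infty$ function with an $L^2$ function, where the convolution is a genuinely infinite sum. The Parseval route above is the one I would use. Everything else is bookkeeping of Fourier supports together with the fact that $P$ acts as the identity on $H^2$.
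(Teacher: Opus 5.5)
Your proof is correct. The two support facts you isolate amount to the operator identities $P b_+ P = b_+ P$ and $P a_- (I-P) = 0$, which is the standard argument behind the result the paper cites from B\"ottcher--Silbermann. The paper gives no proof beyond that citation and the remark that $b_+$ has no negative and $a_-$ no positive Fourier coefficients, so your write-up is a faithful, fully detailed expansion of that same reasoning.
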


We often use the subscript ``$+$'' to denote elements of $H^\infty$, meaning that they have no negative-index Fourier coefficients.
It's complex conjugate, denoted with the ``$-$" subscript will then have no positive-index Fourier coefficients.

Note that the order of operators is important, since different Toeplitz operators generally don't commute.
\cref{thrm:Factorization} only holds due to a relation between Toeplitz operators and $H^\infty$: for $b_+$ by the assumptions of the theorem, all negative Fourier coefficients vanish, and for $a_-$ all positive Fourier coefficients vanish.
\begin{theorem}[\S 2.18 \cite{Bottcher2006-nn}]\label{thrm:Winther}
	For all $h \in H^\infty$ such that $h^{-1} \in H^\infty$, $T(h)$ is invertible and $T(h)^{-1} = T(h^{-1})$.
\end{theorem}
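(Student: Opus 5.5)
Since $h\in H^\infty$, the product $hf$ lies in $H^2$ for every $f\in H^2$. So $T(h)f = P[hf] = hf$; that is, $T(h)$ acts on $H^2$ simply as multiplication by $h$. The plan is to exploit this directly: write down the candidate inverse $T(h^{-1})$, which is also just multiplication, and check both compositions.

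\emph{Step 1.} Verify that $H^\infty$ is closed under multiplication, i.e.\ that $a_+b_+\in H^\infty$ whenever $a_+,b_+\in H^\infty$. The product lies in $L^\infty(\mathbb T)$, and its Fourier coefficients are the convolution $\sum_j a_j b_{k-j}$. This sum runs over $0\le j\le k$, so every negative-index coefficient vanishes. The sum is well defined because $a_+,b_+\in L^2(\mathbb T)$, so the Fourier coefficients of the product equal the convolution of two $\ell^2$ sequences. The same argument, with $b_+$ replaced by an $f\in H^2$, shows that $hf\in H^2$ and hence $T(h)f=hf$.

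\emph{Step 2.} Apply the factorization of \cref{thrm:Factorization} with $a_-=1$, $c=h$ and $b_+=h^{-1}$. This gives $T(h\,h^{-1})=T(h)\,T(h^{-1})$, and since $T(1)=I$ we get $T(h)T(h^{-1})=I$. Next apply the theorem with $c=h^{-1}$ and $b_+=h$ to get $T(h^{-1})T(h)=I$. Equivalently, one can argue without \cref{thrm:Factorization}: for $f\in H^2$, Step 1 gives $T(h)T(h^{-1})f=h\,h^{-1}f=f$, and symmetrically in the other order.

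\emph{Step 3.} Conclude that $T(h)$ is a bijection on $H^2$ with two-sided inverse $T(h^{-1})$. By \cref{lem:ToeplitzBounded} this inverse is bounded, with norm $\|h^{-1}\|_{L^\infty}$, so $T(h)$ is invertible in the sense used earlier: bijective with a bounded inverse.

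The only point needing any care is Step 1, namely that $hf\in H^2$. The function $hf$ belongs to $L^2(\mathbb T)$ because $\|hf\|_{L^2}\le\|h\|_{L^\infty}\|f\|_{L^2}$. Its negative Fourier coefficients vanish because its Fourier series is the convolution of two sequences supported on the nonnegative integers, which is again supported there. This is routine, so I do not expect a genuine obstacle.
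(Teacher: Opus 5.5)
Your proof is correct, and your Step 2 is the argument the paper uses. The paper cites this result from B\"ottcher--Silbermann without proof, but it proves the conjugate counterpart, \cref{thrm:ConjWinther}, by the same two applications of \cref{thrm:Factorization} together with $T(1)=I$; there the anti-analytic factor sits in the $a_-$ slot, whereas you put $h$ or $h^{-1}$ in the $b_+$ slot. Step 1 is not needed for that route, but it does make your alternative direct argument self-contained: on $H^2$, $T(h)$ and $T(h^{-1})$ act simply as multiplication by $h$ and $h^{-1}$.
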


\begin{theorem}\label{thrm:ConjWinther}
	For all $h^\cc \in H^\infty$ such that $(h^{-1})^\cc \in H^\infty$, $T(h)$ is invertible and $T(h)^{-1} = T(h^{-1})$.
\end{theorem}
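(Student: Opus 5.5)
The statement to prove is \cref{thrm:ConjWinther}: if $h^\cc\in H^\infty$ and $(h^{-1})^\cc\in H^\infty$, then $T(h)$ is invertible with $T(h)^{-1}=T(h^{-1})$. I would derive it directly from \cref{thrm:Factorization} rather than reducing to \cref{thrm:Winther} by taking adjoints. Set $a_+ \eqdef h^\cc\in H^\infty$, so that $a_- = a_+^\cc = h$. Likewise set $b_+\eqdef (h^{-1})^\cc\in H^\infty$, so that $b_- = h^{-1}$.

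The key step applies \cref{thrm:Factorization} with the constant function $c=1\in L^\infty(\mathbb T)$ in the middle and the analytic factor on the right taken to be $1\in H^\infty$. This gives $T(a_- c) = T(a_-)T(c)$ whenever $a_-$ is anti-analytic. Choosing $c = h^{-1}$, which lies in $L^\infty$ because $(h^{-1})^\cc\in H^\infty\subset L^\infty$ and conjugation preserves the sup norm, we get
\begin{equation*}
T(h\,h^{-1}) = T(h)\,T(h^{-1}).
\end{equation*}
Exchanging the roles, with $a_- = h^{-1}$ and $c=h$, gives $T(h^{-1}h)=T(h^{-1})T(h)$.

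It remains to note that $T(1)=P|_{H^2}$ is the identity on $H^2$, since $P[f]=f$ for $f\in H^2$ by \cref{DefHardy}. Both products therefore equal the identity, so $T(h)$ has the two-sided inverse $T(h^{-1})$. This inverse is bounded by \cref{lem:ToeplitzBounded}, because $h^{-1}\in L^\infty(\mathbb T)$.

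The only point needing care is to check that \cref{thrm:Factorization} really permits the trivial choices $b_+=1$ and $a_+=1$, and that its factor $a_-$ is exactly our $h$, so that we are using the anti-analytic slot on the left and not the analytic one on the right. Both checks are immediate: constants lie in $H^\infty$, and $a_-=a_+^\cc$ with $a_+=h^\cc$ gives $a_-=h$.

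As an independent check, there is a second route through adjoints. Since $T(c)^* = T(\bar c)$ on $H^2$, \cref{thrm:Winther} applied to $h^\cc$ gives that $T(h^\cc)$ is invertible with inverse $T((h^\cc)^{-1})$. Taking adjoints then yields the claim, using that the adjoint of an invertible bounded operator is invertible with $(A^{-1})^* = (A^*)^{-1}$.
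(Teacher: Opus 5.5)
Your proposal is correct and matches the paper's proof: the paper applies \cref{thrm:Factorization} with trivial right factor $b_+=1$, first with $a_-=h$ and $c=h^{-1}$ to get $T(hh^{-1})=T(h)T(h^{-1})$, then with $a_-=h^{-1}$ and $c=h$ to get $T(h^{-1}h)=T(h^{-1})T(h)$, and concludes from $T(1)=I$ on $H^2$. Your adjoint route via \cref{thrm:Winther} is also valid; note that the factorization step only needs $b_+=1$ with $c$ left free, so your references to ``$c=1$'' and ``$a_+=1$'' are wording slips.
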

\begin{proof}
    Since $h^\cc \in H^\infty$, $1 = T(1) = T(h h^{-1}) = T(h)T(h^{-1})$ according to \cref{thrm:Factorization}.
    Since ${h^{-1}}^\cc \in H^\infty$, $T(1) = T( h^{-1} h) = T(h^{-1}) T(h)$ according to 
    \cref{thrm:Factorization}. Therefore, $T(h^{-1}) = T(h)^{-1}$ as a two-sided inverse.
\end{proof}

We are interested in the properties of Banach algebras in the context of the theory of Toeplitz operators. 
The central object of the latter is the Riesz projector $P$, which we would like to now view as acting
on a Banach algebra $\mathbb{A}$, $P: \mathbb A \rightarrow  \mathbb A$, as a well-defined bounded operator.
Moreover, we focus on the algebras that are embedded into $L^\infty(\mathbb T)$,
since the Toeplitz operator $T(c)$ of most interest to us is defined for $c \in L^\infty(\mathbb T)$.
The following definition captures these hopes.
\begin{definition}[\S 10.15 \cite{Bottcher2006-nn}]\label{def:Decomposing}
A Banach algebra $\mathbb A$ is called a decomposing algebra, if and only if
\begin{enumerate}
\item $\mathbb A$ is a Banach algebra that is continuously embedded into $L^\infty(\mathbb T)$.
\item $\mathbb A$ contains all finite Fourier sums.
\item For all $a \in \mathbb  A$, $P[a]$ is also a member of $\mathbb A$. 
\end{enumerate}
These properties are sufficient for $P$ to be bounded in $\mathbb A$.
\end{definition}
It immediately follows from \cref{def:Decomposing} that $T(a)$ is bounded on $\mathbb A$, if $a \in \mathbb A$ and $\mathbb A$ is a decomposing algebra.
Note that the formulation of \cref{def:Decomposing} formally differs from \cite[Section 10.15]{Bottcher2006-nn} only due to the insignificant difference in the definition of the domain $\mathbb T$.

It is important to note that  $\mathbb A \cap H^\infty$ and its complex-conjugate counterpart are also a Banach algebras for a decomposing algebra $\mathbb A$,  leading to:
\begin{lemma}\label{lem:HardyExp}
Let $\mathbb A$ be a decomposing algebra.
Then for all $a, b \in \mathbb A \cap H^\infty$, $a + b$, $ab$ and $\exp[a]$ are also members of $\mathbb A \cap H^\infty$.
\end{lemma}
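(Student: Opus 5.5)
The plan is to verify the three closure properties directly from the defining properties of $\mathbb A$ and of $H^\infty$, treating $\mathbb A \cap H^\infty$ as the intersection of two Banach algebras inside $L^\infty(\mathbb T)$.

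\emph{Sums.} Both $\mathbb A$ and $H^\infty$ are linear spaces: $H^\infty$ is a linear space because $P$ is linear. Hence $a+b$ lies in each of them and so in the intersection.

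\emph{Products.} Closure of $\mathbb A$ under multiplication is part of \cref{def:Banach}. For $H^\infty$ I would argue that if $a,b\in L^\infty(\mathbb T)$ have vanishing negative Fourier coefficients, then so does $ab$. One route uses the convolution formula for Fourier coefficients, $(ab)_k=\sum_j a_j b_{k-j}$. This is valid since $a,b\in L^2(\mathbb T)$, so the series converges absolutely by Cauchy--Schwarz. For $k<0$, every term has either $j<0$ or $k-j<0$, so it vanishes. Since $ab\in L^\infty$ trivially, we get $P[ab]=ab$ and hence $ab\in H^\infty$.

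\emph{Exponentials.} The partial sums $E_N=\sum_{n=0}^N a^n/n!$ lie in $\mathbb A\cap H^\infty$ by the two previous steps. This uses the fact that the identity $e=1$ belongs to both: it is a finite Fourier sum, so it is in $\mathbb A$ by property 2 of \cref{def:Decomposing}, and it is in $H^\infty$ trivially. The series converges in $\mathbb A$-norm, because $\|a^n\|_{\mathbb A}\le\|a\|_{\mathbb A}^n$, so its limit $\exp[a]$ is in $\mathbb A$ by completeness. The main point to check is that the limit also stays in $H^\infty$. Here I would use the continuous embedding $\|\cdot\|_{L^\infty}\le\|\cdot\|_{\mathbb A}$: it makes $E_N\to\exp[a]$ in $L^\infty(\mathbb T)$, hence in $L^2(\mathbb T)$. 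The subspace $H^\infty = L^\infty\cap\ker(1-P)$ is closed in $L^\infty$, because $P$ is bounded on $L^2$ and $L^\infty$ convergence implies $L^2$ convergence. Therefore $\exp[a]\in H^\infty$, with $P[\exp[a]]=\lim P[E_N]=\lim E_N=\exp[a]$ in $L^2$. The norm bound $\|\exp[a]\|_{\mathbb A}\le e^{\|a\|_{\mathbb A}}$ comes along from the preceding lemma.

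I expect no step to be a genuine obstacle. The only delicate point is this closedness of $H^\infty$ under the limit defining $\exp$, and it is handled by the embedding into $L^\infty$. Note that property 3 of \cref{def:Decomposing} is not actually needed here; the argument relies only on properties 1 and 2.
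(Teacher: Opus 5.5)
Your proof is correct and follows the same route as the paper. The paper treats sums and exponentials as routine and identifies closure under products, via the convolution formula for Fourier coefficients, as the only nontrivial step; your explicit argument that the limit of the exponential series stays in $H^\infty$ (continuous embedding into $L^\infty$, hence $L^2$ convergence, plus boundedness of $P$ on $L^2$) fills in a detail the paper leaves implicit.
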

The only slightly non-trivial part of the proof is showing that $ab \in \mathbb A \cap H^\infty$, which follows from the convolution theorem of the Fourier transform.

As it was mentioned above, the key idea of Part II of the proof is to show that a Toeplitz operator in \cref{eq:appToeplitz} can be inverted inside the algebra $\mathbb A$. 
The properties of decomposing algebra $\mathbb A$ are insufficient to guarantee that $T(a)^{-1}$ is bounded on $\mathbb A$, even though $||T(a)||_{\mathbb A} < \infty$ for all $a \in \mathbb A$.
Consequently, we require an object with stronger properties:
\begin{definition}\label{def:InversionClosed}
	A decomposing algebra $\mathbb  A$ is called inverse-closed, if and only if for all $a \in \mathbb A$ such that $T(a)$ is invertible in $H^2$ , there are $a_+, a_- \in \mathbb A$, such that $a = a_+ a_-$ and  $a_+, a_+^{-1},a_-^\cc, (a_-^{-1})^\cc  \in \mathbb A \cap H^\infty$.
\end{definition}
\noindent The inverse-closed algebras live up to their name due to the following lemma.

\begin{lemma}\label{lem:InversionLemma}
	Let $\mathbb A$ be an inverse-closed algebra and for any $a \in \mathbb{A}$, let 
    $T(a)$ be an invertible Toeplitz operator on $H^2$. 
    Then $T(a)$ is also invertible on $\mathbb A \cap H^\infty$, and so 
    $T(a)^{-1}$ is bounded on $\mathbb A \cap H^\infty$.
\end{lemma}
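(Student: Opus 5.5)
The plan is to use the factorization supplied by inverse-closedness, invert each factor through \cref{thrm:Winther} and \cref{thrm:ConjWinther}, and then check that the inverse keeps $\mathbb A\cap H^\infty$ invariant.

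\textbf{Factor and invert.} Since $T(a)$ is invertible on $H^2$, \cref{def:InversionClosed} gives $a=a_+a_-$ with $a_+,a_+^{-1},a_-^\cc,(a_-^{-1})^\cc\in\mathbb A\cap H^\infty$. Apply \cref{thrm:Factorization} with $c=1$, taking the $H^\infty$ function in the left slot to be $a_-^\cc$ and $b_+=a_+$. This yields
\begin{equation*}
T(a)=T(a_-a_+)=T(a_-)T(a_+).
\end{equation*}
By \cref{thrm:Winther}, $T(a_+)^{-1}=T(a_+^{-1})$. By \cref{thrm:ConjWinther}, $T(a_-)^{-1}=T(a_-^{-1})$. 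Hence on $H^2$
\begin{equation*}
T(a)^{-1}=T(a_+^{-1})\,T(a_-^{-1}).
\end{equation*}
The inverse on $H^2$ is unique, so it suffices to show that this explicit formula maps $\mathbb A\cap H^\infty$ into itself boundedly. It also suffices to show that $T(a)$ maps $\mathbb A\cap H^\infty$ into itself. Together these make $T(a)$ a bijection of $\mathbb A\cap H^\infty$ whose inverse is the restriction of the $H^2$ inverse.

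\textbf{Invariance of $\mathbb A\cap H^\infty$.} Let $g\in\mathbb A\cap H^\infty$.

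The factor $T(a_+^{-1})$ is harmless. Because $a_+^{-1}g\in H^\infty$, the projector acts trivially and $T(a_+^{-1})g=P[a_+^{-1}g]=a_+^{-1}g$. This product lies in $\mathbb A\cap H^\infty$ by \cref{lem:HardyExp}, with norm at most $\|a_+^{-1}\|_{\mathbb A}\|g\|_{\mathbb A}$.

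The factor $T(a_-^{-1})$ carries the nontrivial content. We have $T(a_-^{-1})g=P[a_-^{-1}g]$, and $a_-^{-1}g\in\mathbb A$ because $\mathbb A$ is an algebra. Item 3 of \cref{def:Decomposing} gives $P[a_-^{-1}g]\in\mathbb A$. It lies in $H^2$ by construction and in $L^\infty$ by the embedding, hence in $\mathbb A\cap H^\infty$.

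The same reasoning shows $T(a)$ preserves $\mathbb A\cap H^\infty$: $P[ag]\in\mathbb A$ by the same decomposing-algebra properties.

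\textbf{Boundedness.} The operator norm on $\mathbb A\cap H^\infty$ satisfies
\begin{equation*}
\|T(a)^{-1}\|\le \|a_+^{-1}\|_{\mathbb A}\,\|P\|_{\mathbb A\to\mathbb A}\,\|a_-^{-1}\|_{\mathbb A}.
\end{equation*}

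\textbf{Main obstacle.} The one delicate point is the boundedness of $P$ on $\mathbb A$. \cref{def:Decomposing} asserts it, and I would justify it by the closed graph theorem. The graph of $P$ on $\mathbb A$ is closed because convergence in $\mathbb A$ implies convergence in $L^\infty\subset L^2$, and $P$ is continuous on $L^2$. So if $a_n\to a$ and $Pa_n\to b$ in $\mathbb A$, then $b=Pa$. The other thing to verify is the left/right order in \cref{thrm:Factorization}: it should be used in the orientation $T(a_-ca_+)$, with the antianalytic factor on the left. That matches the splitting of $a$ used above.
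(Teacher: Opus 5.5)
Your proposal follows the paper's proof step for step: factor $a=a_+a_-$ via \cref{def:InversionClosed}, write $T(a)=T(a_-)T(a_+)$ by \cref{thrm:Factorization}, invert the factors with \cref{thrm:Winther,thrm:ConjWinther} to get $T(a)^{-1}=T(a_+^{-1})T(a_-^{-1})$, and conclude from the boundedness of these factors on $\mathbb A\cap H^\infty$. You also make explicit two things the paper leaves implicit: that $T(a)$ itself preserves $\mathbb A\cap H^\infty$, and that $P$ is bounded on $\mathbb A$ (by the closed graph theorem). The one unjustified step, which the paper shares, is $a_-^{-1}\in\mathbb A$: \cref{def:InversionClosed} only grants $(a_-^{-1})^\cc\in\mathbb A$, so you need $\mathbb A$ to be closed under complex conjugation. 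This does hold for $\mathbb W$ and $C^{n,\alpha}(\mathbb T)$, the only algebras to which the lemma is applied, so you should state that assumption.
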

\begin{proof}
Given an inverse-closed algebra $\mathbb A$ and an invertible on $H^2$ Toeplitz operator $T(a)$ for $a \in \mathbb A$, there exist
$a_+, a_- \in \mathbb A$, such that $a = a_+ a_-$ according to \cref{def:InversionClosed}.
Due to \cref{thrm:Factorization}, $T(a) = T(a_-) T(a_+)$, since $a_+, a_-^\cc \in \mathbb A \cap H^\infty$.
Operators $T(a_+)$ and $T(a_-)$ are invertible and $T(a_+)^{-1} = T(a_+^{-1})$, $T(a_-)^{-1} = T(a_-^{-1})$ according to \cref{thrm:Winther,thrm:ConjWinther}.
Since $T(a)$, $T(a_-)$, and $T(a_+)$ are all invertible, $T(a)^{-1} = T(a_+)^{-1} T(a_-)^{-1} = T(a_+^{-1}) T(a_-^{-1})$.
We expressed the inverse of $T(a)$ as a finite composition of Toeplitz operators that are bounded on $\mathbb A \cap H^\infty$. 
Therefore, $T(a)$ is invertible $\mathbb A \cap H^\infty$ and its inverse is bounded.  
\end{proof}

Finally, we define two Banach algebras that will be used in the proof of \cref{thrm:WienerPartTheorem,thrm:SmoothWienerPartTheorem}.
These are the Wiener algebra $\mathbb W$ and the algebra of positive H\"older class functions 
$C^\alpha(\mathbb T)$.

\begin{definition}[\S 1.49 \cite{Bottcher2006-nn}]
	A Wiener algebra $\mathbb W$ is a subset of $L^2(\mathbb T)$ of functions that have absolutely convergent Fourier series. It is a Banach algebra when equipped with the norm
    \begin{equation}
    	||f||_{\mathbb W} \equiv \sum_{n = -\infty}^{\infty} |f_n| < \infty,
    \end{equation}
    where $f_n$ are the Fourier coefficients of $f$.
\end{definition}
It is easy to show that $\mathbb W$ is a decomposing algebra.
Since the Fourier series of all functions in $\mathbb W$ converge absolutely, they also converge everywhere and uniformly  on $\mathbb T$.
Due to the absolute uniform convergence of the Fourier transform, all members of $\mathbb W$ are uniformly continuous.
The following theorem is known in classic literature as two separate statements: the existence of the 
Wiener-Hopf factorization and Wiener's lemma \cite{Wien32} (that if $f$ is non-zero and has an absolutely convergent Fourier series, then so does $1/f$). Indeed, Gelfand recognized Banach algebras as the natural setting for 
Wiener's lemma \cite{Gelf39}.

\begin{theorem}[\S 10.3, 10.22.a \cite{Bottcher2006-nn}]\label{eq:Wiener}
	The Wiener algebra $\mathbb W$ is a decomposing and inverse-closed algebra.
\end{theorem}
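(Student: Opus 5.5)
The statement to prove is the last one above, \cref{eq:Wiener}: $\mathbb W$ is a decomposing and inverse-closed algebra. I will check the three axioms of \cref{def:Decomposing} directly, and then get inverse-closedness from Wiener's lemma combined with a logarithm construction.

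\textbf{Decomposing property.} The Banach algebra structure comes from the convolution theorem. The product $ab$ has Fourier coefficients $(ab)_n=\sum_k a_k b_{n-k}$, so $\|ab\|_{\mathbb W}\le \sum_n\sum_k |a_k||b_{n-k}| = \|a\|_{\mathbb W}\|b\|_{\mathbb W}$ by Fubini. The constant function $1$ has norm $1$. Completeness holds because $\mathbb W$ is isometric to $\ell^1(\mathbb Z)$. The continuous embedding into $L^\infty(\mathbb T)$ follows from $\sup_u|\sum_n a_n e^{\ii nu}|\le \sum_n |a_n|$. Finite Fourier sums lie in $\mathbb W$ trivially. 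Finally, $P$ only deletes the negative-index coefficients, so $\|P[a]\|_{\mathbb W}\le\|a\|_{\mathbb W}$.

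\textbf{Inverse-closedness.} Take $a\in\mathbb W$ with $T(a)$ invertible on $H^2$. Functions in $\mathbb W$ are continuous, so \cref{thrm:ToeplitzInvertable} gives $a(u)\neq0$ on $\mathbb T$ and $\mathrm{ind}(a)=0$. The argument then runs in three steps.
\begin{enumerate}
\item \emph{Wiener's lemma.} I will show that a nonvanishing $a\in\mathbb W$ has $1/a\in\mathbb W$. I will use the Gelfand route: the maximal ideals of $\mathbb W$ are exactly the evaluations $a\mapsto a(u_0)$, because any character $\chi$ is fixed by $z=\chi(e^{\ii u})$. Here $|z|\le1$ and $|z^{-1}|\le1$, so $z=e^{\ii u_0}$. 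Hence $a$ lies in no maximal ideal and is invertible.
\item \emph{Logarithm in $\mathbb W$.} Since $\mathrm{ind}(a)=0$, a continuous logarithm $b$ with $a=e^{b}$ exists; the task is to show $b\in\mathbb W$. Approximate $a$ by a trigonometric polynomial $p$ with $\|a-p\|_{\mathbb W}$ small. Then $a=p\,(1+(a-p)/p)$. Step 1 gives $1/p\in\mathbb W$, and a neighbourhood argument (uniform convergence implies $p$ is nonvanishing with $\mathrm{ind}(p)=0$) makes $\|(a-p)/p\|_{\mathbb W}<1$. So $\log(1+(a-p)/p)$ is defined in $\mathbb W$ by its power series.
\item \emph{Splitting.} The factor $p$ is a zero-index Laurent polynomial. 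It factors into linear terms $(1-\lambda e^{\ii u})$ and $(1-\mu e^{-\ii u})$ with $|\lambda|,|\mu|<1$, each of which has a logarithm in $\mathbb W$. With $b\in\mathbb W$ in hand, set $b_+=P[b]$ and $b_-=b-P[b]$, and define $a_\pm=\exp[b_\pm]$. By \cref{lem:HardyExp} and its conjugate analogue, $a_+, a_+^{-1}=\exp[-b_+]\in \mathbb W\cap H^\infty$, and similarly $a_-^\cc,(a_-^{-1})^\cc\in\mathbb W\cap H^\infty$. Then $a=a_+a_-$, which is what \cref{def:InversionClosed} requires.
\end{enumerate}

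\textbf{Main obstacle.} The hard step is Step 2, showing the logarithm lies in $\mathbb W$ and not just in $C(\mathbb T)$. The key issue is controlling $\|(a-p)/p\|_{\mathbb W}$, which needs a bound on $\|1/p\|_{\mathbb W}$ that is uniform as $p$ approaches $a$. I will get this from continuity of inversion in the Banach algebra: $1/p\to 1/a$ in $\mathbb W$ as $p\to a$ in $\mathbb W$. If this becomes messy, the fallback is to cite Wiener's lemma and the Wiener–Hopf factorization from \cite{Wien32,Gelf39,Bottcher2006-nn} directly, since the theorem is stated as quoted from there.
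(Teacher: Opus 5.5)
Your proposal is correct and follows the route the paper takes by citation (\S 10.3, 10.22.a of B\"ottcher--Silbermann, which the paper describes as Wiener's lemma in Gelfand's Banach-algebra setting plus existence of the Wiener--Hopf factorization): the decomposing axioms follow directly from the $\ell^1$ structure, and inverse-closedness comes from building $b=\log a\in\mathbb W$, splitting it with $P$, and exponentiating, the same scheme the paper uses for the H\"older classes. You rightly identify $\log a\in\mathbb W$ as the nontrivial point, since unlike the H\"older case it does not follow from smoothness of $\log$, and your argument handles it correctly: you approximate $a$ by a trigonometric polynomial $p$, factor $p$ as a nonzero constant times $\prod_j(1-\mu_j e^{-\ii u})\prod_l(1-\lambda_l e^{\ii u})$, and take a power-series logarithm of $1+(a-p)/p$.
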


We conclude the exposition with the H\"older classes and their useful properties below.

\begin{definition}[Section 1.50, \cite{Bottcher2006-nn}, Chapter 1 \cite{Ladyenskaja1968LinearAQ}]
	H\"older class $C^{0, \alpha}(\mathbb T)$ for $0 < \alpha < 1$ is defined as a subset of $L^2(\mathbb T)$, such 
    that any $f \in C^{0,\alpha}(\mathbb T)$ satisfies
\begin{equation}
	|f(u) - f(u^\prime)| < A (|u - u^\prime| \; \mathrm{mod} \; 2 \pi)^{\alpha}
\end{equation}
for some $A>0$ and all $u, u^\prime \in \mathbb T$. 
The $\mathrm{mod}\; 2\pi$ for the absolute value on the right is understood in the sense of $|u - u^\prime | \; \mathrm{mod} \; 2 \pi = \min_{n \in \mathbb Z}(|u - u^\prime + 2 \pi n|)$. 
For integer $n \geq 0$ and $0< \alpha < 1$, the H\"older class $C^{n, \alpha}(\mathbb T)$ consists of functions $f$ such that $f^{(n)} \in C^{, 0, \alpha}(\mathbb T)$.
When equipped with the norm
\begin{equation}
	||f||_{C^{n, \alpha}(\mathbb T)} = \sum_{k = 0}^{n} ||f^{(k)}||_{L^\infty} + \sup_{x \neq y} \frac{|f^{(n)}(x) - f(y)^{(n)}|}{(|x-y| \; \mathrm{mod} \; 2 \pi)^{\alpha}},
\end{equation}
each class $C^{n,\alpha}(\mathbb T)$ is a Banach algebra  (see \cite[Eq. 1.4]{Ladyenskaja1968LinearAQ}).
\end{definition}

\begin{lemma}[\S 2.4 \cite{Zygmund_2003}]\label{lem:CProps}
	Assume that $m \geq 0$ is an integer, $0<\beta < 1$, and $\alpha = m+ \beta$. Then $f \in C^{m, \beta}(\mathbb T)$ implies that $f_n = O(n^{-\alpha})$ and $|f(u) - \sum_{k = -n}^n f_k e^{\ii u k}| = O(n^{-\alpha} \log n)$, where $f_n$ is the $n$'th Fourier coefficient of $f$.
	For all integer $m\geq 1$ and  $0< \beta < 1$, $C^{m, \beta}(\mathbb T) \subset \mathbb W$.
\end{lemma}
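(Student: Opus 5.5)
The plan is to reduce every claim to the behaviour of the Fourier coefficients of the top derivative $f^{(m)}$, which is in $C^{0,\beta}(\mathbb T)$. The three claims are then handled by an integration-by-parts identity, the classical half-period shift estimate, and the Jackson--Lebesgue approximation argument.

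\emph{Coefficient decay.} Since $f\in C^{m,\beta}(\mathbb T)$, the function $f$ and its derivatives up to order $m$ are continuous and $2\pi$-periodic. Integrating by parts $m$ times in $f_n=\int_{-\pi}^{\pi} f(u)e^{-\ii nu}\,\d u/2\pi$ produces no boundary terms, so
\begin{equation*}
  f_n=(\ii n)^{-m}\,\big(f^{(m)}\big)_n \qquad (n\neq 0).
\end{equation*}
It therefore suffices to show that $g\in C^{0,\beta}(\mathbb T)$ implies $g_n=O(n^{-\beta})$. For this I use the shift $u\mapsto u+\pi/n$, which changes $e^{-\ii nu}$ into $-e^{-\ii nu}$. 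This gives
\begin{equation*}
  g_n=\tfrac12\int_{-\pi}^{\pi}\big(g(u)-g(u+\pi/n)\big)e^{-\ii nu}\,\frac{\d u}{2\pi},
\end{equation*}
and the Hölder condition yields $|g_n|\le \tfrac{A}{2}(\pi/|n|)^{\beta}$. Combining the two displays gives $f_n=O(|n|^{-m-\beta})=O(|n|^{-\alpha})$.

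\emph{Embedding into $\mathbb W$.} For $m\ge 1$ we have $\alpha=m+\beta>1$, so $\sum_n|f_n|\le |f_0|+C\sum_{n\neq0}|n|^{-\alpha}<\infty$. Hence $f\in\mathbb W$, i.e.\ $C^{m,\beta}(\mathbb T)\subset\mathbb W$. This step is immediate once the decay estimate is in hand.

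\emph{Uniform approximation by partial sums.} Write $S_n f=\sum_{|k|\le n} f_k e^{\ii ku}$. I will use two facts. First, $S_n$ is the identity on trigonometric polynomials of degree at most $n$. Second, the operator norm of $S_n$ on $L^\infty(\mathbb T)$ is the Lebesgue constant $L_n=\|D_n\|_{L^1}=O(\log n)$, where $D_n$ is the Dirichlet kernel. For any trigonometric polynomial $p$ of degree at most $n$, these give
\begin{equation*}
  \|f-S_nf\|_\infty=\|(f-p)-S_n(f-p)\|_\infty\le (1+L_n)\,\|f-p\|_\infty .
\end{equation*}
The step I expect to be the main obstacle is to produce a $p$ with $\|f-p\|_\infty=O(n^{-\alpha})$, which is Jackson's theorem for $C^{m,\beta}$. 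I would first construct $p$ by convolving with the Jackson kernel $K_n$, a normalized power of the Fejér kernel, which has degree at most $n$ and moments $\int|t|^{j}K_n(t)\,\d t=O(n^{-j})$ for $j\le m+1$. For $m=0$ the estimate $|f*K_n-f|\le A\int|t|^\beta K_n=O(n^{-\beta})$ is direct. For $m\ge1$ I would apply the same construction to $f^{(m)}$ to obtain an approximant of order $n^{-\beta}$. I would then recover $f$ by integrating $m$ times, each integration gaining a factor $n^{-1}$ via the standard Jackson induction (approximating $F=\int f'$ through $f'$'s approximant plus the constant-term correction). Failing a self-contained route, I would cite Jackson's theorem from \cite{Zygmund_2003} directly. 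Substituting $\|f-p\|_\infty=O(n^{-\alpha})$ into the display gives $|f(u)-S_nf(u)|=O(n^{-\alpha}\log n)$ uniformly in $u$, which completes the lemma.
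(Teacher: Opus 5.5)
Your proposal is correct and is essentially the classical argument that the paper imports by citing Zygmund without giving a proof of its own. Specifically, you use integration by parts plus the half-period shift for $f_n=O(n^{-\alpha})$, summability of $|n|^{-\alpha}$ for $\alpha>1$ for the embedding into $\mathbb W$, and the Lebesgue inequality $\|f-S_nf\|_\infty\le(1+L_n)E_n(f)$ (with $E_n(f)$ the best uniform approximation error by trigonometric polynomials of degree at most $n$) combined with Jackson's theorem for the $O(n^{-\alpha}\log n)$ bound. The one step to state precisely is the $n^{-1}$ gain per integration in your Jackson induction. It comes from applying the Lipschitz-case Jackson estimate to $f-\int p_0$, where $p_0$ is the mean-zero part of the approximant of $f'$, which yields $E_n(f)\le (C/n)\,E_n(f')$. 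Merely integrating the approximant of $f'$ would gain nothing.
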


The following property characterizes the inverses of $T(a)$ for $a \in C^{n, \alpha}$.

\begin{theorem}\label{thrm:HolderInverseClosed}
	The H\"older class $C^{n, \alpha}(\mathbb T)$ is a decomposing algebra for all integer $n \geq 0$ and $0 < \alpha < 1$.
	The H\"older class $C^{n, \alpha}(\mathbb T)$ is an inversion-closed algebra for all integer $n \geq 1$ and $0 < \alpha < 1$.
\end{theorem}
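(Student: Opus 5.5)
The proof splits into the two assertions. In each, the only nontrivial analytic input is the boundedness of the Riesz projector on H\"older classes (Privalov's theorem). Everything else reduces to the Banach-algebra structure of $C^{n,\alpha}(\mathbb T)$, which is given in its definition, together with \cref{lem:HardyExp}, \cref{thrm:ToeplitzInvertable} and the usual exponential/logarithm construction of a Wiener--Hopf factorization.

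\emph{Decomposing property (all $n\ge 0$).} I would check the three items of \cref{def:Decomposing} in turn.
\begin{enumerate}
\item \emph{Continuous embedding.} The $C^{n,\alpha}(\mathbb T)$ norm contains $\|f\|_{L^\infty}$ as one summand. Hence $\|f\|_{L^\infty}\le\|f\|_{C^{n,\alpha}}$, which is exactly the continuous embedding into $L^\infty(\mathbb T)$.
\item \emph{Finite Fourier sums.} Trigonometric polynomials are $C^\infty$, so they belong to every $C^{n,\alpha}(\mathbb T)$.
\item \emph{Invariance under $P$.} This is the key step. On $\mathbb T$ one writes $P[f]=\tfrac12\bigl(f+\ii\,\widetilde{\mathcal H}f\bigr)+\tfrac12 f_0$, where $\widetilde{\mathcal H}$ is the conjugate-function operator and $f_0$ is the zeroth Fourier coefficient. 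It therefore suffices to show that $\widetilde{\mathcal H}$ maps $C^{n,\alpha}$ boundedly into itself.
\end{enumerate}
For $n=0$ this is Privalov's classical theorem: $\widetilde{\mathcal H}$ is bounded on $C^{0,\alpha}(\mathbb T)$ for $0<\alpha<1$. I would cite it from \cite{Zygmund_2003}, or sketch it by splitting the conjugate kernel integral into $|t|<\delta$ and $|t|\ge\delta$ with $\delta=|u-u'|$. The restriction $0<\alpha<1$ is exactly what makes both pieces $O(\delta^\alpha)$. For $n\ge1$, $\widetilde{\mathcal H}$ is a Fourier multiplier and so commutes with $\tfrac{d}{du}$. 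Thus $(\widetilde{\mathcal H}f)^{(k)}=\widetilde{\mathcal H}(f^{(k)})$, and the $n=0$ case applied to $f^{(n)}$ gives the H\"older seminorm. The lower derivatives are controlled by the embedding $C^{0,\alpha}\subset L^\infty$ together with Privalov applied to each $f^{(k)}$. I expect this step, bounding $P$ on H\"older spaces, to be the main obstacle; the rest is bookkeeping.

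\emph{Inverse-closedness ($n\ge1$).} Let $a\in C^{n,\alpha}(\mathbb T)$ with $T(a)$ invertible on $H^2$. Since $a$ is continuous, \cref{thrm:ToeplitzInvertable} gives $a(u)\neq0$ on $\mathbb T$ and $\mathrm{ind}(a)=0$. Zero winding number yields a continuous branch $b=\log a$ on $\mathbb T$. Because $a$ takes values in a compact set avoiding $0$, on which $\log$ is locally holomorphic, the chain rule and the algebra property of $C^{n,\alpha}$ show $b\in C^{n,\alpha}(\mathbb T)$. Concretely, $b'=a'/a$, and $1/a\in C^{n,\alpha}$ because composition of a $C^{n,\alpha}$ function with a smooth function preserves the class.

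By the decomposing property, $b_+\eqdef P[b]$ and $b_-\eqdef b-P[b]$ both lie in $C^{n,\alpha}$. Moreover $b_+\in H^\infty$, and $b_-$ has only negative-index Fourier coefficients, so $b_-^\cc\in H^\infty$. Set $a_\pm=\exp[b_\pm]$, so that $a=a_+a_-$. By \cref{lem:HardyExp} applied to $\pm b_+$ and $\pm b_-^\cc$, the four functions $a_+$, $a_+^{-1}=\exp[-b_+]$, $a_-^\cc=\exp[b_-^\cc]$ and $(a_-^{-1})^\cc=\exp[-b_-^\cc]$ all lie in $C^{n,\alpha}\cap H^\infty$. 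This is precisely the factorization required by \cref{def:InversionClosed}. The hypothesis $n\ge1$ enters only through the cited framework: via \cref{lem:CProps} it places the class inside $\mathbb W$, where the continuous logarithm and the factorization are compatible with \cref{eq:Wiener}. Should an additional check be needed, I would use that inclusion to confirm that the factors constructed here agree with the Wiener-algebra factorization.
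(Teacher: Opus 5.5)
Your proof is correct and follows the paper's argument. The decomposing part checks the three items of \cref{def:Decomposing}, using Privalov's theorem (which the paper phrases as the Cauchy integral preserving $C^{0,\alpha}$) and the commutation of $P$ with differentiation; the inverse-closed part uses the same logarithm, Riesz-projection and exponential factorization.

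The one difference is in your favour. You obtain $a_+^{-1}=\exp[-b_+]$ and $(a_-^{-1})^\cc=\exp[-b_-^\cc]$ directly from \cref{lem:HardyExp}, whereas the paper detours through $C^{n,\alpha}(\mathbb T)\subset\mathbb W$ and Wiener's lemma (\cref{eq:Wiener}). So your hedging closing remark is unnecessary: your construction uses $n\ge 1$ only to write $b'=a'/a$, and it would also cover $n=0$ once $\log a\in C^{0,\alpha}(\mathbb T)$ is checked directly via the local Lipschitz property of $\log$.
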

\begin{proof}
We begin the proof by showing that $C^{n, \alpha}(\mathbb T)$ is a decomposing algebra for all integer $n \geq 0$ and $0< \alpha < 1$.
The first requirement of \cref{def:Decomposing} is satistied: indeed, $C^{n,\alpha}(\mathbb T)$ is continuously embedded into $L^\infty(\mathbb T)$, since $C^{n,\alpha}(\mathbb T) \subset L^\infty(\mathbb T)$ and $||\bullet ||_{C^{n,\alpha}(\mathbb T)} \geq ||\bullet ||_{L^\infty(\mathbb T)}$.
The second requirement of \cref{def:Decomposing} is also naturally satisfied, as all the finite Fourier Series are smooth functions. 
To check the third requirement of \cref{def:Decomposing}, we use the integral form of the Riesz projector. 
As was discussed in the main text, $\mathcal G_+ = (I + i \mathcal H)/2$, where $\mathcal H[f]$ is a Hilbert transform of $f \in L^2 (\mathbb R)$. 
Performing the $\sim$ map on both sides of $\mathcal G_+ = (I + i \mathcal H)/2$ leads to the integral representation of the Riesz projector on $C^{n,\alpha}(\mathbb T) \subset L^2(\mathbb T)$:
\begin{equation}\label{eq:SinForm}
	P[a](u) = \frac{1}{2} \left(f(u) + \int_{-\pi}^{\pi} \frac{\d u^\prime}{2\pi i} \frac{a(u^\prime + u) e^{- \ii \frac{u^\prime}{2}}}{\sin\left( \frac{u^\prime}{2} \right)}\right)
\end{equation}
 for $a \in C^{n,\alpha}(\mathbb T)$. 
The integral divergence in \cref{eq:SinForm} is understood in the sense of principal value.
The property $P[a] \in C^{0,\alpha}(\mathbb T)$ for all $a \in C^{0,\alpha}(\mathbb T)$ and $0 < \alpha < 1$ can be confirmed by substitution $z = e^{\ii u}$.
The variable substitution reduces the integral in \cref{eq:SinForm} to a Cauchy integral on a unit circle, which is known to preserve the H\"older class for $n = 0$ and $0 < \alpha < 1$ \cite[Chapter 5.1]{gakhov1966boundary}. 
From the form of \cref{eq:SinForm} it is evident that $P$ commutes with the derivative almost everywhere on $\mathbb T$.
Therefore, if $f \in C^{n, \alpha}(\mathbb T)$ with $0 < \alpha < 1$, then $\p^{m}P[f] = P[\p^m f]$ for $0 \leq m \leq n$.
Necessarily, derivative of $f$ is such that $\p^n f \in C^{0, \alpha}(\mathbb T)$ for $f \in C^{n, \alpha}(\mathbb T)$.
Therefore, $P[\p^n f] \in C^{0, \alpha}(\mathbb T)$, since $P$ preserves the H\"older class for $0 < \alpha < 1$ and $n = 0$.
As a result, $\p^n P[ f] \in C^{0, \alpha}(\mathbb T)$ and $P[f] \in C^{n, \alpha}(\mathbb T)$, since $P$ commutes with the derivative.
This shows that the requirement (3) in \cref{def:Decomposing} is satisfied and $C^{n , \alpha}(\mathbb T)$ is a decomposing algebra for integer $n \geq 0$ and $0< \alpha < 1$.

We proceed to prove that $C^{n, \alpha}(\mathbb T)$ is an inversion-closed algebra for all integer $n \geq 1$ and $0< \alpha < 1$.
Since all members of $C^{n, \alpha}(\mathbb T)$ are continuous, it follows that $T(a)$ being invertible in $H^2$ is equivalent to $a(t) \neq 0$ for all $t \in \mathbb T$ and $\mathrm{ind}(a) = 0$  (\cref{thrm:ToeplitzInvertable}).
Function $a(t) \neq 0$ is continuous on a compact $\mathbb T$, so $0 < \inf |a| \leq \sup |a| < \infty$. 
The contour $a(t)$ does not complete a single turn around $0$ when $t$ runs from $-\pi$ to $\pi$, since $\mathrm{ind}(a) = 0$, and therefore $\ln a(t)$ is a well-defined single-valued function on $\mathbb T$. 
Since logarithm is a smooth function on $[\inf |a|, \, \sup|a|] \subset (0, \infty)$, $\ln a(t)$ is necessarily the same H\"older class $C^{n, \alpha}(\mathbb T)$ as $a$. 
Since $C^{n, \alpha}(\mathbb T)$ is a decomposing algebra, $P$ is bounded on $C^{n,\alpha}$, so $b+_ = P[\ln a]$ is a member of $C^{n,\alpha} \cap H^\infty$. 
Similarly, $b_- = (1-P)[\ln a]$ is such that $b_-^\cc$ is a member of $C^{n,\alpha} \cap H^\infty$. 
Since $C^{n, \alpha}(\mathbb T)$ is a decomposing algebra, $a_+ = e^{b_+}$ and $a_- = e^{b_-}$ are such that $a_+, a_-^\cc \in C^{n, \alpha} \cap H^\infty$ according to \cref{lem:HardyExp}. 
Moreover $a_+^{-1}, (a_-^{-1})^\cc \in C^{n,\alpha}(\mathbb T)$, since $0< \inf |a_+|, \inf |a_-|$ due to $a(t) = a_+(t) a_-(t) \neq 0$ for all $t \in \mathbb T$.
On the other side, $C^{n, \alpha}(\mathbb T) \subset \mathbb W$ for $n \geq 1$ and $0 < \alpha < 0$, so
$a_+^{-1}, (a_-^{-1})^{\cc} \in \mathbb W \cap H^\infty$, according to \cref{eq:Wiener}.
As a result, $a_+^{-1}, (a_-^{-1})^{\cc} \in C^{n,\alpha}(\mathbb T) \cap \mathbb W \cap H^\infty = C^{n,\alpha}(\mathbb T) \cap H^\infty$.
\end{proof}

With these preliminaries, we are ready 
to prove \cref{thrm:WienerPartTheorem,thrm:SmoothWienerPartTheorem}.

\subsection{Proof of \cref{thrm:WienerPartTheorem}}\label{sec:WienerAlgebra}
First, we show that the functions $S_{xy}$ and $S_{yy}$ that satisfy the assumptions of \cref{thrm:WienerPartTheorem} are mapped into the Wiener algebra $\mathbb W$ by the map $\sim$.
The proof of this fact can be formulated with the following lemma:
\begin{lemma}\label{lem:TildeMapWiener}
Let $S_{xy} \in L^2(\mathbb R)$ and $S_{yy} \in L^\infty(\mathbb R)$.
In addition, suppose that $\sum_{n = 0}^{\infty} |t_n|$ and $\sum_{n = 0}^{\infty} |S_{xy, n}|$ are both finite, 
where $2\pi t_k = \int_{- \pi}^{\pi} \d{u} e^{i k u} S_{yy}\left(\tan u/2\right)$ and $S_{xy, n} = \langle \phi_n, S_{xy}\rangle$.
Then and only then $s_y(u) = S_{yy}(\tan u /2)$ and $P[\tilde S_{xy}]$ belong to $\mathbb W$.
\end{lemma}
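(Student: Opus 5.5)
The plan is to identify the Fourier coefficients of the two functions on $\mathbb T$ with the quantities $t_k$ and $S_{xy,k}$ appearing in the hypotheses. After that, membership in $\mathbb W$ is simply a restatement of the summability assumptions. The proof splits into two independent pieces, one for $s_y$ and one for $P[\tilde S_{xy}]$, and each piece is an ``if and only if'' that follows from an identity between coefficients.

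\emph{Step 1 ($s_y$).} Since $S_{yy}\in L^\infty(\mathbb R)$, the function $s_y(u)=S_{yy}(\tan u/2)$ lies in $L^\infty(\mathbb T)\subset L^2(\mathbb T)$. Its Fourier coefficients are
\[
\frac{1}{2\pi}\int_{-\pi}^{\pi} e^{-\ii k u}\, s_y(u)\,\d u = t_{-k}.
\]
Because $S_{yy}$ is real, we have $t_{-k}=t_k^\cc$, and hence $|t_{-k}|=|t_k|$. Therefore
\[
\|s_y\|_{\mathbb W}=\sum_{k\in\mathbb Z}|t_k| = |t_0| + 2\sum_{k\ge1}|t_k|,
\]
which is finite if and only if $\sum_{k\ge0}|t_k|<\infty$. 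I will note that the reality of $S_{yy}$ is needed here. In the paper's setting $S_{yy}$ is real, being a power spectrum, so I will use that explicitly; for complex $S_{yy}$ one would also need the negative-index sum.

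\emph{Step 2 ($P[\tilde S_{xy}]$).} By \cref{lem:TildeMap}, the map $\sim$ sends $\phi_k$ to $e^{\ii k u}$ and preserves inner products (the normalisation in \cref{normconv}). Hence the $k$-th Fourier coefficient of $\tilde S_{xy}$ equals $\langle \phi_k, S_{xy}\rangle = S_{xy,k}$, as in \cref{coefs}. The Riesz projector deletes the negative-index coefficients, so
\[
P[\tilde S_{xy}] = \sum_{k\ge0} S_{xy,k}\, e^{\ii k u}
\]
in $L^2(\mathbb T)$. Its $\mathbb W$-norm is exactly $\sum_{k\ge0}|S_{xy,k}|$, so it belongs to $\mathbb W$ precisely when this sum is finite.

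\emph{Main obstacle.} The only delicate point is a bookkeeping one. I must make sure that the conventions for $t_k$ (defined with $e^{+\ii k u}$) and for $S_{xy,k}$ (defined via $\langle\phi_k,\cdot\rangle$) match the Fourier coefficients used in the definition of $\mathbb W$ up to sign of the index and conjugation. I also must check that no factor of $2\pi$ or $\sqrt\pi$ spoils the identification, and any such constant factor is harmless for finiteness of the sum in any case. Once these identities are in place, both directions of the ``then and only then'' follow immediately from the definition of $\|\cdot\|_{\mathbb W}$.
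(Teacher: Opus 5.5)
Your proposal is correct and follows the paper's proof essentially step for step. The paper likewise treats $t_n$ as the Fourier coefficients of $s_y$ and uses $t_n = t_{-n}^\star$ to pass from the one-sided to the two-sided sum, and it identifies $S_{xy,k}$ with the Fourier coefficients of $\tilde S_{xy}$ via the $\sim$ map so that $\|P[\tilde S_{xy}]\|_{\mathbb W} = \sum_{k\ge 0}|S_{xy,k}|$. Your two extra remarks make explicit what the paper uses only implicitly: that $t_k$ is really the $(-k)$-th coefficient, and that $S_{yy}$ must be real, which the lemma's hypothesis $S_{yy}\in L^\infty(\mathbb R)$ does not state. Neither point affects the conclusion.
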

\begin{proof}
Since $\sum_{n  = 0}^{\infty}|t_n| < \infty$ and $t_n = t_{-n}^\cc$, it is true that $\sum_{n = -\infty}^{\infty} |t_n| < \infty$.
By definition, $t_n$ is the $n$'th Fourier coefficient of $s_y$, so $s_y$ has an absolutely convergent Fourier series, leading to $s_y \in \mathbb W$.
The inverse is also true: $S_{yy} \in L^\infty(\mathbb R)$ due to $s_y \in L^\infty(\mathbb T)$, and $\sum_{n = 0}^\infty|t_n| < \infty$ due to $s_y \in \mathbb W$.
Since the $\sim$ map transforms the basis $\{ \phi_k\}$ into the Fourier basis on $\mathbb T$ (\cref{lem:TildeMap}), $S_{xy, l}$ are the Fourier coefficients of $\tilde S_{xy}$.
Then $\sum_{n =0}^{\infty} |S_{xy, l}| < \infty$ is a sum of absolute values of all the Fourier coefficients of $P[\tilde S_{xy}]$, leading to $P[\tilde S_{xy}] \in \mathbb W$. 
The inverse is true with the logic of the proof reversed.
\end{proof}

With the connection between the modified data $(S_{xy}, S_{yy})$ and the Wiener algebra established, we immediately claim the uniqueness of the corresponding filter $h$ and verify its smoothness.
Concretely, we seek to establish that a solution to $T(s_y)[h] = P[\tilde S_{xy}]$ is a member of $\mathbb W$, when $P[\tilde S_{xy}]$ and $s_y$ are in $\mathbb W$.
The algebra $\mathbb W$ is an inversion-closed algebra according to \cref{eq:Wiener} and $\inf s_y > 0$, therefore
$\tilde h = T(s_y)^{-1}[P[\tilde S_{xy}]] \in \mathbb W \cap H^\infty$ according to \cref{lem:InversionLemma}.  
In combination with \cref{lem:TildeMapWiener}, $\tilde h \in \mathbb W \cap H^\infty$ is equivalent to the statement (i) of \cref{thrm:WienerPartTheorem}.

To prove statements (ii) and (iii) of \cref{thrm:WienerPartTheorem}, we utilize the notation used in \cref{sec:ProofTheorem4}. 
The proof will be complete, if we show that $\delta h_1^{(n)}(\omega)$ and $\delta h_2^{(n)}(\omega)$, defined by \cref{eq:deltah1,eq:deltah2}, vanish uniformly with $n \rightarrow \infty$.
First, we note that $\delta \tilde h_1^{(n)}$ is a member of $\mathbb W$, so
\begin{equation}
	|\delta \tilde h_1^{(n)}(u)| \leq \sum_{k = n}^{\infty} |h_n| \rightarrow 0,
\end{equation}	
as $n \rightarrow \infty$. 
Therefore, $\delta \tilde h_1^{(n)}(u) \rightarrow 0$ everywhere and uniformly.

To estimate $\delta \tilde h_2^{(n)}(u)$, we first note that  the Wiener algebra is decomposing and therefore $||T(s_y)||_{\mathbb W} < \infty$ due to \cref{eq:Wiener}.
Then, the coefficeints $\delta S_l^{(n)}$, defined by \cref{eq:ExactSystem}, can be estimated as
\begin{equation}
	\sum_{l = 0}^{\infty} |\delta S_l^{(n)}| \leq ||T(s_y)||_{\mathbb W} ||\delta h^{(n)}_1||_{\mathbb W} \rightarrow 0, 
\end{equation}
as $n \rightarrow \infty$.
It was shown in \cref{sec:Tpositive} that the eigenvalues of $\vec T_n^{-1}$ are bounded from above by $(\inf |S_{yy}|)^{-1} < \infty$, so $\delta \tilde h_{2, k}^{(n)}$ can be estimated as
\begin{equation}\label{eq:WH2normEst}
	\sum_{k = 0}^{n} |\delta h_k^{(n)}| = \sum_{k = 0}^{n} |h_k - h_k^{(n)}| \leq (\inf |S_{yy}|)^{-1} \sum_{l = 0}^{\infty} |\delta S_l| \rightarrow 0. 
\end{equation}
Since
\begin{equation}
	|\delta h_k^{(n)}(u)| \leq \sum_{k = 0}^{n} |\delta h_k^{(n)}| \rightarrow 0
\end{equation}
as $n \rightarrow \infty$, $\delta h_2^{(n)}(u)$ vanishes everywhere and uniformly.

As a result, $\tilde h^{(n)}(u)$ converges to $\tilde h(u)$ uniformly, since $\delta \tilde h_1^{(n)}(u)$ and $\delta \tilde h_2^{(n)}(u)$ vanish uniformly. 
This immediately implies that $h^{(n)}(\omega)$ converges to $h(\omega)$ uniformly due to \cref{lem:TildeMapWiener}. 
As an immediate bonus, $h_p^{(n)}$ converges to $h_p$ uniformly with $n\rightarrow \infty$ for all $p$.
This verifies statements (ii) and (iii) and concludes the proof of \cref{thrm:WienerPartTheorem}.

\subsection{Proof of \cref{thrm:SmoothWienerPartTheorem}}

To prove \cref{thrm:SmoothWienerPartTheorem}, we first establish the equivalence between the functions that satisfy the assumptions of \cref{thrm:SmoothWienerPartTheorem} and the positive non-integer H\"older classes on $\mathbb T$.
The following lemma summarizes the properties of such map.
\begin{lemma}\label{lem:HolderLineToCircle}
Let $g \in L^2(\mathbb R)$ and $\omega_0 > 0$. 
Assume $g$ is of a H\"older class $C^{m, \alpha}$ for integer $m \geq 1$ and $0< \alpha < 1$ on $[- \omega_0, \omega_0]$ and $g^\prime(\omega) = g(1/\omega)$ is of a H\"older class $C^{m, \alpha}$ on $[-\omega^{-1}_0, \omega_0^{-1}]$,
Then and only then $\bar g(u) = g(\tan u/2)$ is of a H\"older class  $C^{m, \alpha}$ on $\mathbb T$.
\end{lemma}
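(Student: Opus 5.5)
The plan is to split the circle $\mathbb T$ into two overlapping closed arcs and use the chain rule on each. Set $u_0 = 2\tan^{-1}\omega_0 \in (0,\pi)$. The arc $I_0 = [-u_0,u_0]$ is the image of $[-\omega_0,\omega_0]$ under $\omega=\tan(u/2)$. The arc $I_\infty = \{u : |u| \geq u_0\}$, taken as a neighbourhood of $u=\pm\pi$ with the endpoints identified, is the image of $\{|\omega|\ge\omega_0\}$. On $I_\infty$ the natural coordinate is $v = 1/\omega = \cot(u/2)$, which maps $I_\infty$ diffeomorphically onto $[-\omega_0^{-1},\omega_0^{-1}]$, with $u=\pm\pi$ corresponding to $v=0$.

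Since the arcs overlap only at their endpoints, I will first shrink $\omega_0$ slightly, or equivalently enlarge the arcs using the hypothesis on a slightly smaller interval, so that they overlap on open sets. The arcs are compact, so the H\"older seminorm on $\mathbb T$ is controlled by the seminorms on the two arcs: for $|u-u'|$ small both points lie in a common arc, and for $|u-u'|$ bounded below the sup-norm suffices.

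For the direction ``if'', I write $\bar g|_{I_0} = g\circ\psi_0$ with $\psi_0(u)=\tan(u/2)$, and $\bar g|_{I_\infty} = g'\circ\psi_\infty$ with $\psi_\infty(u) = \cot(u/2)$. Both $\psi_0$ on $I_0$ and $\psi_\infty$ on $I_\infty$ are real-analytic with all derivatives bounded, because $u_0<\pi$ keeps us away from the poles. The key lemma is that composing a $C^{m,\alpha}$ function on a compact interval with a $C^\infty$ map having bounded derivatives yields a $C^{m,\alpha}$ function. This follows from the Fa\`a di Bruno formula: the $m$-th derivative of $g\circ\psi$ is a finite sum of terms $g^{(j)}(\psi)\cdot(\text{products of }\psi^{(i)})$. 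Each such term is $\alpha$-H\"older as a product of $\alpha$-H\"older bounded functions; for $j<m$ the factor $g^{(j)}$ is even Lipschitz. Periodicity at $u=\pm\pi$ is automatic, because $\psi_\infty$ is smooth across $u=\pi$, where $\cot(u/2)$ passes through $0$. For the direction ``only if'', I apply the same argument to the inverse maps $u = 2\tan^{-1}\omega$ on $[-\omega_0,\omega_0]$ and $u = 2\,\mathrm{arccot}\, v$ on $[-\omega_0^{-1},\omega_0^{-1}]$, taken mod $2\pi$ so that it is smooth through $v=0$. Both are smooth with bounded derivatives on these compact intervals.

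The main obstacle is the patching near the junction points $u=\pm u_0$, together with the bookkeeping of the mod-$2\pi$ distance. Since the hypotheses give regularity only on closed intervals, I will use overlapping arcs $[-u_1,u_1]$ and $\{|u|\ge u_2\}$ with $u_2<u_1$. The sup-norm bound on the pieces then handles pairs of points farther apart than $u_1-u_2$. If the hypothesis cannot be extended beyond $\omega_0$, I would instead argue directly. For $u<u_0<u'$, I insert the junction point and use the triangle inequality $|F(u)-F(u')|\le|F(u)-F(u_0)|+|F(u_0)-F(u')|\le 2A|u-u'|^\alpha$, applied to $F=\bar g^{(m)}$. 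This requires the one-sided derivatives of order $\le m$ to agree at $u_0$. That agreement holds because both representations $g\circ\psi_0$ and $g'\circ\psi_\infty$ describe the same function $\bar g$ near $u_0$, given that $g$ is $C^m$ on $\mathbb R\setminus\{0\}$ through the two overlapping descriptions at $|\omega|=\omega_0$.
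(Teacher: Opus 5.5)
Your route is the same as the paper's: two arcs meeting at $\pm u_0$, transfer of H\"older regularity through the diffeomorphisms $\tan(u/2)$ and $\cot(u/2)$, then patching at the junction. Your ``only if'' direction is fine. The gap is in the ``if'' direction, at exactly the junction you flagged.

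You claim the one-sided derivatives of $\bar g$ up to order $m$ agree at $u_0$ because ``both representations describe the same function near $u_0$''. But $g\circ\psi_0$ describes $\bar g$ only on $[-u_0,u_0]$, and $g'\circ\psi_\infty$ only on $\{|u|\ge u_0\}$. They share the single value $\bar g(u_0)$ and nothing more. That $g$ is $C^m$ across $|\omega|=\omega_0$ is what has to be proved; the hypotheses do not supply it, so the argument is circular. Shrinking $\omega_0$ does not create an open overlap either. It enlarges the interval $[-\omega_0^{-1},\omega_0^{-1}]$ on which the condition on $g(1/\omega)$ is required, so the two hypotheses always meet only at $|\omega|=\omega_0$.

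The step cannot be repaired, because the ``if'' direction is false as stated. Take $g(\omega)=|\omega^2-\omega_0^2|\,e^{-\omega^2}\in L^2(\mathbb R)$.
\begin{itemize}
\item On $[-\omega_0,\omega_0]$ it equals $(\omega_0^2-\omega^2)e^{-\omega^2}$, which is $C^\infty$.
\item $g(1/v)=(v^{-2}-\omega_0^2)e^{-1/v^2}$, with value $0$ at $v=0$, is $C^\infty$ on $[-\omega_0^{-1},\omega_0^{-1}]$.
\item So both hypotheses hold for every $m$ and $\alpha$.
\item Yet the left and right derivatives of $g$ at $\omega_0$ are $-2\omega_0e^{-\omega_0^2}$ and $+2\omega_0e^{-\omega_0^2}$, so $\bar g$ is not even differentiable at $u_0$.
\end{itemize}

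The paper's proof has the same hole. It patches only the H\"older quotient of $\bar g^{(m)}$ across $\pm u_0$ and never checks that $\bar g^{(m)}$ exists there. Also, the ``reverse Minkowski'' inequality it uses, $|a|^\alpha+|b|^\alpha\le(|a|+|b|)^\alpha$, is backwards for $0<\alpha<1$. Your triangle-inequality bound with constant $2$ is the correct form of that step.

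To make the lemma true you must add a hypothesis. Two options work:
\begin{itemize}
\item Require $g$ to be $m$ times differentiable at $\pm\omega_0$. Then your direct junction argument goes through.
\item Require the first condition on $[-\omega_1,\omega_1]$ for some $\omega_1>\omega_0$. Then your overlapping-arcs argument works as written.
\end{itemize}
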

\begin{proof}
Both necessity and sufficiency conditions are proven based on the fact that diffeomorphisms between compact sets preserve the H\"older class.
Indeed, a diffeomorphisms between compact sets is known to preserve the H\"older class $C^{0, \alpha}$ \cite[Proposition 1.2.7]{Fiorenza2017}.
Since the diffeomorphisms between compact spaces also preserve the finiteness of the derivatives of $g$, the H\"older class $C^{m, \alpha}$ is preserved for all non-negative integer $n$.
We proceed to prove the necessity condition first.
We assume that $\bar g$ is a member of $C^{m, \alpha}(\mathbb T)$.
Consider an arbitrary interval $[-u_0, u_0]$ for $u_0< \pi$.
The the map $x = \tan u/2$ is a diffeomorphism between compact sets $[-u_0, u_0] \subset \mathbb T$ and $[-\omega_0, \omega_0] \subset \mathbb R$, where $\omega_0 = \tan u_0/2$.
Therefore, automatically $g \in C^{m, \alpha}([-\omega_0, \omega_0])$, since $\bar g \in C^{m,\alpha}(\mathbb T) \subset C^{m, \alpha}([-u_0, u_0])$
Consider the diffeomorphic map $x = \cot u/2$ between $[u_0, 2 \pi - u_0]$ and $[-\omega_0^{-1}, \omega_0^{-1}]$.
Then since $g(\tan u/2)$ is of a positive H\"older class $C^{n, \alpha}$ on $[u_0, 2 \pi - u_0]$ the map $g^\prime(x) = g(1/x) = \bar g(2 \cot^{-1} x)$ is of a positive H\"older class $C^{m, \alpha}$ on $[-\omega_0^{-1}, \omega_0^{-1}]$.

The proof of the sufficiency condition is constructed similarly.
Consider $\omega_0>0$ and $u_0 = 2 \tan^{-1} \omega_0$.
Repeating the argument above immediately implies that $\bar g \in C^{m, \alpha}([-u_0, u_0])$ and $\bar g \in C^{m,\alpha}([u_0, 2 \pi - u_0])$. 
We proceed to prove that separate H\"older conditions on the intersecting closed intervals imply the H\"older condition in their union.
Let $\bar g$ be such that
\begin{align}
	|\bar g^{(m)}(u_1)- \bar g^{(m)}(u_2)| &\leq C_1 |(u_1 - u_2) \; \mathrm{mod} \; 2 \pi|^\alpha,
	\\
	|\bar g^{(m)}(u_3) - \bar g^{(m)}(u_4)| &\leq C_2 |(u_3 - u_4) \; \mathrm{mod} \; 2 \pi|^\alpha,
\end{align}
for $u_1, u_2 \in [-u_0, u_0]$ and $u_3, u_4 \in [u_0, 2 \pi - u_0]$.
Then to prove the H\"older condition on the whole $\mathbb T$, one only needs to consider the case of $u_1 \in [-u_0, u_0]$ and $u_2 \in [u_0, 2 \pi - u_0]$.
The following estimation for $C = \max(C_1, C_2)$ and $u_3 = \pm u_0$ holds:
\begin{equation}
\begin{split}
    |\bar g^{(m)}(u_1) - \bar g^{(m)}(u_2)| 
    &\leq |\bar g^{(m)}(u_1) - \bar g^{(m)}(u_3)| \\
        &\qquad + |\bar g^{(m)}(u_2) - \bar g^{(m)}(u_3)| \\
    &\leq C (|u_1 - u_3| \; \mathrm{mod} \; 2\pi)^\alpha \\
        &\qquad + C(|u_2 - u_3| \; \mathrm{mod} \; 2\pi)^\alpha \\
    &\leq C (|u_1 - u_3| \; \mathrm{mod} \; 2\pi \\
        &\qquad + |u_2 - u_3| \; \mathrm{mod} \; 2\pi)^\alpha \\
    &= C (|u_1 - u_2| \; \mathrm{mod} \; 2 \pi )^\alpha.
\end{split}
\end{equation}
We used the reverse Minkowski 
inequality $|a|^\alpha + |b|^\alpha \leq (|a| + |b|)^\alpha$ for $0 < \alpha < 1$ to establish the third inequality.
To satisfy the last equality, we select $u_3 = u_0$ when $|u_1 - u_2| \leq \pi$, and $u_3 = - u_0$ otherwise.
Therefore, the H\"older condition holds for all points on $\mathbb T$ and $\bar g \in C^{m,\alpha}(\mathbb T)$. 
\end{proof}

With \cref{lem:HolderLineToCircle}, the following course towards the completion of the proof of \cref{thrm:SmoothWienerPartTheorem} is straightforward. 
According to assumtions of \cref{thrm:SmoothWienerPartTheorem}, $S_{yy}(\omega)$ satisfies the assmptions of \cref{lem:HolderLineToCircle}.
Therefore, $s_y(u) = S_{yy}(\tan u/2)$ is such that $s_y \in C^{m,\alpha}(\mathbb T)$.
Under the $\sim$ map, $S_{xy}(\omega)$ is mapped into $\tilde S_{xy}(u) = \sqrt{\pi}(1 + \ii \omega) S_{xy}(\omega)$, where $\omega = \tan u/2$.
Since $S_{xy}(\omega)$ is a member of $C^{m, \alpha}([-\omega_0, \omega_0])$, then $(1 + \ii \omega) S_{xy}(\omega)$ is also a member of $C^{m,\alpha}([-\omega_0, \omega_0])$.
Therefore, $\tilde S_{xy} \in C^{m, \alpha}(\mathbb T)$, since $\tilde S_{xy}$ satisfies the assumptions of \cref{lem:HolderLineToCircle}.
Note that since $C^{m, \alpha}(\mathbb T) \subset \mathbb W$ for $m \geq 1$ and $\alpha > 0$, the conditions of \cref{thrm:SmoothWienerPartTheorem} are strictly stronger than \cref{thrm:WienerPartTheorem}.

Note that $T(s_y)$ is invertible on $L^2(\mathbb T)$ according to \cref{thrm:ToeplitzInvertable}.
Therefore, $T(s_y)$ is also invertible on $C^{m, \alpha}(\mathbb T) \cap H^\infty$ due to $C^{m, \alpha}(\mathbb T)$ being inverse-closed (\cref{lem:InversionLemma,thrm:HolderInverseClosed}).
Therefore, the filter $\tilde h = T(s_y)^{-1}[\tilde S_{xy}]$ is also a member of $C^{m, \alpha}(\mathbb T)$, 
and therefore satisfies (i) and (ii) of \cref{thrm:SmoothWienerPartTheorem} due to \cref{lem:HolderLineToCircle,lem:CProps}.

To study the convergence of $h^{(n)}(\omega)$ to $h(\omega)$, we perform estimates analogous to the one in \cref{sec:WienerAlgebra}.
The estimate on $\delta \tilde h^{(n)}_1(u)$, defined by \cref{eq:deltah1}, is completed as follows.
Function $\delta \tilde h^{(n)}_1(u)$ is necessarily a member of $C^{m, \alpha}(\mathbb T)$, since it is a difference of $\tilde h(u) \in C^{m, \alpha}(\mathbb T)$ and $\tilde h^{(n)}(u)$, which is a finite Fourier sum.
Therefore, by \cref{lem:CProps},
\begin{equation}\label{eq:Dh1FinEst}
	|\delta \tilde h^{(n)}_{1}(u)| = O\left( \frac{\ln n}{n^{m+\alpha}} \right),
\end{equation}
with the estimate being uniform in $u \in \mathbb T$ and
\begin{equation}\label{eq:CaH1kEst}
	|\delta \tilde h^{(n)}_l| = O \left( \frac{1}{l^{m+\alpha}} \right)
\end{equation}
for $l > m$.

The estimate on $\delta \tilde h^{(n)}_2(u)$ is more cumbersome. 
We begin by estimating $\delta S^{(n)}_l$ defined by \cref{eq:ExactSystem}:
\begin{equation}
	 \sum_{l = 0}^{\infty} |\delta S_l^{(n)}| \leq ||T(s_y)||_{\mathbb W} \sum_{l = 0}^\infty|\delta \tilde h_{1, l}^{(n)}|. 
\end{equation}
Since $s_y \in C^{m, \alpha} \subset \mathbb W$, the Wiener norm of $s_y$ is finite and $||T(s_y)||_{\mathbb W} < \infty$, so
\begin{equation}
	\sum_{l = 0}^{\infty} |\delta S_l^{(n)}| = O\left( \frac{1}{n^{m + \alpha - 1}} \right).
\end{equation}
With the estimation on $\delta S^{(n)}_l$ obtained, we are prepared to estimate 
$\delta \tilde h_2^{(n)}(u)$, which is given by \cref{eq:deltah1}.
From the solution of the truncated equation it follows that
\begin{equation}
	\delta \tilde h^{(n)}_{2, l} = \sum_{k = 0}^{n} [(\vec T^{(n)})^{-1}]_{lk} \delta S_k^{(n)}.
\end{equation}
Applying an estimation analogous to \cref{eq:WH2normEst}, one recovers 
\begin{equation}\label{eq:CaHk2kEst}
	|\delta \tilde h^{(2)}(u)| \leq \sum_{l = 0}^\infty |\delta \tilde h^{(n)}_{2, l}| = O\left( \frac{1}{n^{m+\alpha-1}} \right).
\end{equation}

\cref{eq:CaH1kEst,eq:CaHk2kEst}, when combined, result in
\begin{equation}\label{eq:CaCoefEst}
	|h_p^{(n)} - h_p| = O\left( \frac{1}{n^{m+\alpha - 1}} \right),
\end{equation}
and
\begin{equation}\label{eq:CaFuncEst}
	|\tilde h^{(n)}(u) - \tilde h(u)| = O \left( \frac{1}{n^{m+\alpha - 1}} \right),
\end{equation}
with the estimates being uniform in $p$ and $u$ correspondingly.
When \cref{eq:CaCoefEst,eq:CaFuncEst} are combined with \cref{lem:HolderLineToCircle}, the properties (iii) and (iv) follow and the proof of \cref{thrm:SmoothWienerPartTheorem} is complete.


\section{Supplementary numerical data}\label{sec:ExtraData}
\begin{figure*}[!htbp]
    \centering
    \includegraphics[width=1\linewidth]{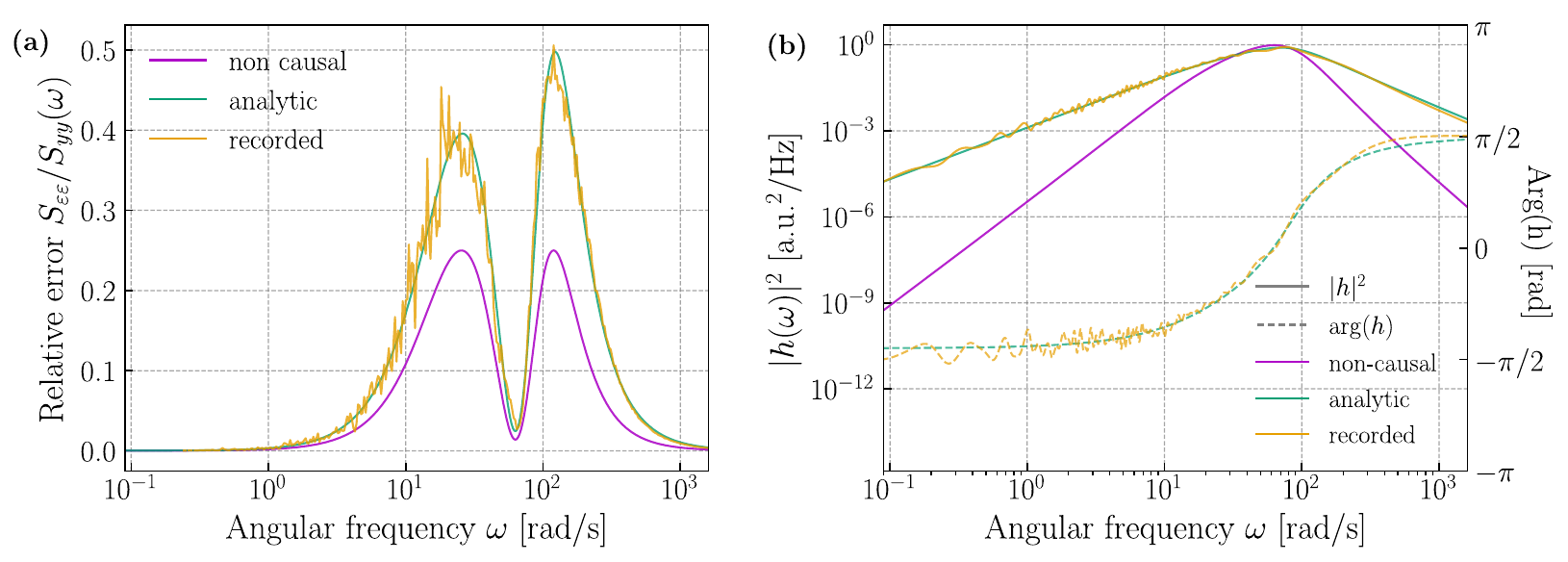}
    \caption{
Figure reproduces Fig. (1.a) and Fig. (1.b). The yellow "recorded" and magenta "non-causal" curves are the same as in Fig. 1. The green "analytic" curve is the Wiener filter obtained directly from analytic forms of the PSD's: $S_{xx}=A \gamma^2/ ((|\omega|-\omega_c)^2+\gamma ^2)$ with parameters $\gamma = 2 \pi$, $A=0.9$ and $\omega_c = 10 \cdot 2\pi$, and $S_{nn}=5/\omega^{1.8}+0.01$. The plot shows how sampling noise impacts the performance of the filter construction procedure by benchmarking it against the solution obtained from smooth PSDs.
}
\end{figure*}

\begin{figure}[!htbp]
    \centering
    \includegraphics[width=1\linewidth]{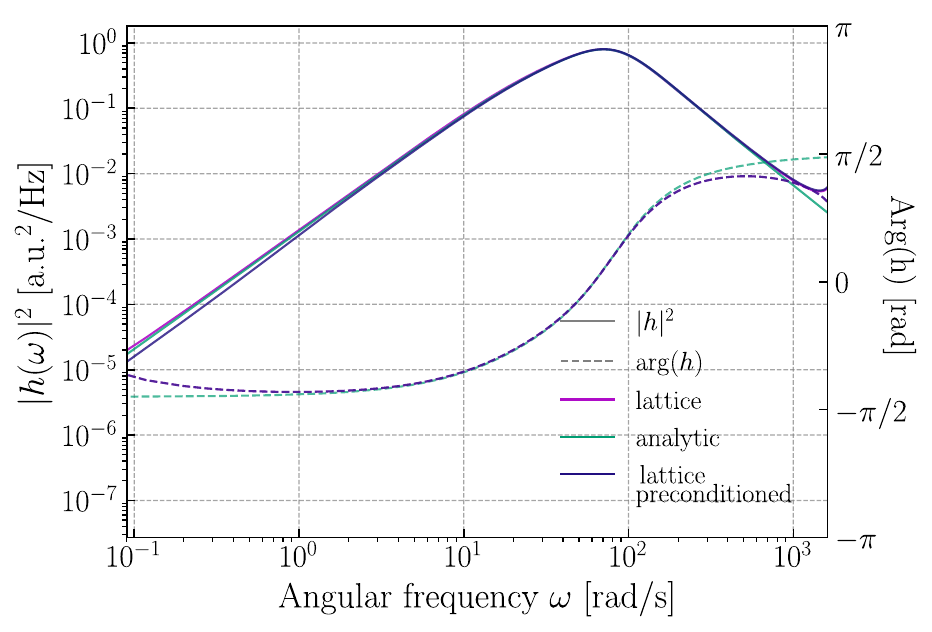}
    \caption{
Figure reproduces Fig. (2.b). The "analytic" curve is the same as the "analytic" curve in Fig. 2. The  "analytic" curve is obtained by preconditioning the smooth data and applying the filter computation procedure.  The "lattice preconditioned" curve was obtained by solving the \cref{Gplus,Gminus} on an equally spaced frequency grid after preconditioning the data.  The "lattice" curve was obtained by solving the \cref{Gplus,Gminus} on the same grid without preconditioning the data. The plot illustrates that the solution obtained with the proposed filter-construction procedure converges to a naive attempt to solve \cref{Gplus,Gminus}. The small difference at small frequencies is attributed to the finite step size of the lattice grid. The small difference at large frequencies is attributed to the growing unregulated logarithmic divergence of the Hilbert transform at the high-frequency edge of the finite frequency grid.
The PSDs used to construct the plot are the same as in Fig 2: $S_{xx}=A \gamma^2/ ((|\omega|-\omega_c)^2+\gamma ^2)$ with parameters $\gamma = 2 \pi$, $A=0.9$ and $\omega_c = 10 \cdot 2\pi$, and $S_{nn}=5/\omega^{1.8}+0.01$. The plot shows how sampling noise impacts the performance of the filter construction procedure by benchmarking it against the solution obtained from smooth PSDs.
}
\end{figure}

\end{document}